
\documentclass[12pt]{amsart}
\usepackage[utf8]{inputenc}
\usepackage{enumerate,enumitem,amsmath,amssymb,mathrsfs,stmaryrd}
\usepackage{latexsym,epsf,graphicx,comment,appendix}

\newfont{\bb}{msbm10 at 12pt}
\newfont{\tbb}{msbm10 at 8pt}

\def\r{\hbox{\bb R}}

\def\h{\hbox{\bb H}}

\newcommand{\meta}[2]{\langle #1,#2 \rangle }

\topmargin 0cm 
\textheight = 40
\baselineskip 
\textwidth 16cm 
\oddsidemargin 0.3cm
\evensidemargin 0.4cm

\usepackage {amsmath}
\usepackage {amsthm}
\usepackage{times}
\usepackage{amscd}
\usepackage{epsf}

\numberwithin{equation} {section}

\begin{document}
\mbox{}\vspace{0.2cm}\mbox{}

\theoremstyle{plain}\newtheorem{lemma}{Lemma}[section]
\theoremstyle{plain}\newtheorem{proposition}{Proposition}[section]
\theoremstyle{plain}\newtheorem{theorem}{Theorem}[section]

\theoremstyle{plain}\newtheorem*{theorem*}{Theorem}
\theoremstyle{plain}\newtheorem*{main theorem}{Main Theorem} 
\theoremstyle{plain}\newtheorem*{lemma*}{Lemma}
\theoremstyle{plain}\newtheorem*{claim}{Claim}

\theoremstyle{plain}\newtheorem{example}{Example}[section]
\theoremstyle{plain}\newtheorem{remark}{Remark}[section]
\theoremstyle{plain}\newtheorem{corollary}{Corollary}[section]
\theoremstyle{plain}\newtheorem*{corollary-A}{Corollary}
\theoremstyle{plain}\newtheorem{definition}{Definition}[section]
\theoremstyle{plain}\newtheorem{acknowledge}{Acknowledgment}
\theoremstyle{plain}\newtheorem{conjecture}{Conjecture}

\begin{center}
\rule{15cm}{1.5pt} \vspace{.4cm}

{\bf\Large On $n$-superharmonic functions and some geometric applications} 
\vskip .3cm

Shiguang Ma$\, ^\dag$\footnote{The author is
supported by NSFC 11571185 and NSFC 11871283} 
and \hskip 0.15cm Jie Qing$\,^\ddag$\footnote{The author is partially supported by NSF DMS-1608782}\\

\vspace{0.3cm} 
\rule{15cm}{1.5pt}
\end{center}

\vskip 0.3cm
\noindent$\mbox{}^\dag$ Department of Mathematics, Nankai University, Tianjin, China; \\e-mail: 
msgdyx8741@nankai.edu.cn 
\vspace{0.2cm}

\noindent $\mbox{}^\ddag$ Department of Mathematics, University of California, Santa Cruz, CA 95064; \\
e-mail: qing@ucsc.edu

\title{}
\begin{abstract} In this paper we study asymptotic behavior of $n$-superharmonic functions at isolated singularity
using the Wolff potential and $n$-capacity estimates in nonlinear potential theory. Our results are inspired by and 
extend \cite{AH} of Arsove-Huber and \cite{Tal-2} of Taliaferro in 2 dimensions. To study 
$n$-superharmonic functions we use a new notion of thinness by $n$-capacity motivated by a type of 
Wiener criterion in \cite{AH}. To extend \cite{Tal-2}, we employ the Adams-Moser-Trudinger 
inequality for the Wolff potential, which is inspired by the one used in \cite{BM} of Brezis-Merle (cf. \cite{Io}). 
For geometric applications, we study the asymptotic end behavior of complete
conformally flat manifolds as well as complete properly embedded hypersurfaces in hyperbolic space, both with nonnegative Ricci curvature. 
These geometric applications seem to elevate the importance of $n$-Laplace equations and make 
a closer tie to the classic analysis developed in conformal geometry in general dimensions.
\end{abstract}

\maketitle


\section{Introduction}\label{Sec:Intro}

In this paper we will develop some understanding of isolated singularities of $n$-superharmonic functions in $n$ dimensions and apply it
to studying some geometric problems. Recall that the $n$-Laplace operator
\begin{equation}
\Delta_n u = \text{div}(|\nabla u|^{n-2}\nabla u)
\end{equation}
is a quasilinear, possibly degenerate, elliptic operator that agrees with the Laplace operator 
in $2$ dimensions. 
\\

The theory of n-Laplace equations is as fundamental as that of classic Laplace 
equations since it is also in the center of the interplay of several important fields of mathematics including calculus of 
variations, partial differential equations, nonlinear potential theory, and mathematical physics. Obviously the theory of 
$n$-Laplace equations is more interesting as well as more challenging, because the principle of superposition is no 
longer available, instead, understanding of interactions is indispensable. We would like to develop higher dimensional extensions to 
what have been done for the theory of subharmonic functions in \cite{AH, HK, Tal-2} (references therein) regarding 
asymptotic behavior and their applications in differential geometry. Our research in this paper seems to elevate the importance of 
$n$-Laplace equations and makes a closer tie to the classic analysis developed in conformal geometry.
\\

The first goal for us is to study the behavior of $n$-superharmonic functions at a point. The first main theorem
in general dimensions is inspired by and extends the work of Arsove-Huber in \cite[Theorem 1.3]{AH}.

\begin{theorem}\label{Thm:main theorem-1-intro}
Let $w$ 
be a nonnegative lower semi-continuous function that is $n$-superharmonic in $B(0, 2)\subset \r^n$ and 
\[
-\Delta_n w=\mu\ge 0
\]
for a Radon measure $\mu\ge 0$.
Then there is a set $E\subset\mathbb{R}^n$, which is $n$-thin at the origin, such that
\[
\lim_{x\notin E \text{ and } |x|\rightarrow 0}\frac{w(x)}{\log\frac{1}{|x|}}=\liminf_{|x|\rightarrow0}\frac{w(x)}{\log\frac{1}{|x|}}=m\ge 0
\]
and
\[
w(x)\geq m\log\frac{1}{|x|} - C \ \text{ for $x\in B(0, 1)\setminus \{0\}$ and some $C$}.
\]
Moreover, if $w\in C^2(B(0,2)\setminus \{0\})$ and $(B(0, 2)\setminus \{0\},e^{2w}|dx|^2)$ is complete at the origin, then $m\geq1$. 
\end{theorem}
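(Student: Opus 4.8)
The plan is to prove the statement in three stages, corresponding to the three assertions. For the first two, the key tool is the Riesz-type decomposition of an $n$-superharmonic function via the Wolff potential: on $B(0,1)$ one writes $w(x) = W_1^n \mu(x) + h(x)$ up to bounded error, where $W_1^n\mu(x) = \int_0^1 \bigl(\mu(B(x,t))/t^{n-1}\bigr)^{1/(n-1)}\, dt/t$ and $h$ is $n$-harmonic (hence bounded near the origin). The mass $m$ should be identified with $\mu(\{0\})^{1/(n-1)}$, since the point mass at the origin contributes exactly $\mu(\{0\})^{1/(n-1)}\log\frac1{|x|}$ to the Wolff potential up to a bounded term, while the contribution of $\mu$ restricted to $B(0,1)\setminus\{0\}$ has $\liminf = 0$ as $|x|\to 0$. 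First I would prove the quantitative lower bound $w(x) \ge m\log\frac1{|x|} - C$: the point-mass part alone gives this, and the remaining parts of the Wolff potential are nonnegative, while $h$ is bounded. This also shows $\liminf_{|x|\to 0} w(x)/\log\frac1{|x|} \ge m$, and the reverse inequality for the $\liminf$ follows because along a suitable sequence the non-atomic part of the potential is $o(\log\frac1{|x|})$, which is a standard measure-theoretic fact about Wolff potentials of measures with no atom at the point.

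Next I would establish the exceptional set $E$ and the limit along its complement. Define $E = \{x : w(x) > (m+\varepsilon_{|x|})\log\frac1{|x|}\}$ for a suitable slowly decreasing $\varepsilon_r \to 0$, or more precisely the set where $W_1^n(\mu\llcorner(B(0,1)\setminus\{0\}))(x)$ is not $o(\log\frac1{|x|})$. The claim that $E$ is $n$-thin at the origin — in the paper's new $n$-capacity sense, presumably a Wiener-type series $\sum_k \bigl(\mathrm{cap}_n(E\cap A_k, A_k')/\cdots\bigr)$ condition over dyadic annuli $A_k = \{2^{-k-1}\le|x|\le 2^{-k}\}$ — would be proved by a Borel--Cantelli / capacitary-weak-type estimate: the $n$-capacity of the set where a Wolff potential of a measure of finite mass exceeds a given level $\lambda$ is bounded by (mass)$/\lambda$ up to constants, and summing these dyadic capacity bounds against the convergent tail of $\mu$ near the origin gives the thinness series. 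On the complement of $E$ one then has both $w(x)/\log\frac1{|x|} \le m + o(1)$ and $\ge m - o(1)$, hence the limit equals $m$, matching the $\liminf$ computed above. I expect this capacitary thinness estimate, and checking it against the precise definition of $n$-thinness the paper introduces, to be the main technical obstacle, since it is exactly where the nonlinearity forces one to work with the Wolff potential rather than with a linear Green representation.

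Finally, for the geometric conclusion, suppose $w \in C^2(B(0,2)\setminus\{0\})$ and $(B(0,2)\setminus\{0\}, e^{2w}|dx|^2)$ is complete at the origin. Completeness means $\int_0^{1} e^{w(\gamma(s))}\,ds = \infty$ along radial paths $\gamma$ into the origin, and by the lower bound $w(x) \ge m\log\frac1{|x|} - C$ the metric is at least $c\,|x|^{-m}|dx|$ near the origin; if $m \le 1$ this would be a finite-length approach only when... — more carefully, radial completeness forces $\int_0^{1} r^{-m}\,dr = \infty$, which holds iff $m \ge 1$, giving $m \ge 1$ immediately from the pointwise lower bound. The only subtlety is that completeness at the origin is a two-sided condition and one should argue that $w$ cannot be so large as to make the metric complete while $m < 1$: but the liminf characterization gives a sequence $x_j \to 0$ with $w(x_j) \le (m+o(1))\log\frac1{|x_j|}$, and feeding this into the radial integral along directions realizing the liminf shows that $m < 1$ yields a path of finite length to the origin, contradicting completeness. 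Hence $m \ge 1$, completing the proof.
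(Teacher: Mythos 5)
There is a genuine gap at the very foundation of your plan: the additive ``Riesz-type decomposition'' $w = W_{1,n}^{\mu} + h + O(1)$ with $h$ $n$-harmonic does not exist for the $n$-Laplacian, because the principle of superposition fails for this nonlinear operator. What the nonlinear potential theory (Kilpel\"{a}inen--Mal\'{y}) actually gives is only the two-sided estimate $C_1 W_{1,n}^{\mu}(x,r) \le w(x) \le C_2 \inf_{B(x,r)} w + C_3 W_{1,n}^{\mu}(x,2r)$, with dimensional constants $C_1, C_2, C_3$ that are not equal to $1$ and with an extra infimum term. Consequently none of your key deductions go through as stated: the identification of $m$ with the atom $\mu(\{0\})^{1/(n-1)}$ (which, incidentally, would also need the normalization by $\omega_{n-1}$, and is not even asserted in the theorem), the lower bound $w \ge m\log\frac{1}{|x|} - C$ with the \emph{same} $m$ as the liminf, and the upper bound $w \le (m+o(1))\log\frac{1}{|x|}$ off $E$ all require constant exactly $1$ in front of the potential, which the theory does not provide. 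This is precisely the difficulty the paper's proof is organized around: the lower bound and the monotonicity of $\min_{|x|=r} w/\log\frac1{|x|}$ come from a comparison principle (Tolksdorf, Kichenassamy--V\'{e}ron), the value $m$ is pinned down as the unique blow-down limit via a cut-off of the rescalings $w(r\xi)/\log\frac1r$, a compactness/convergence lemma, the Liouville theorem for bounded $n$-harmonic functions, and a weak Harnack inequality; and only then can the KM estimate be applied to the \emph{difference} $w - \inf_{B(y,\frac34|y|)} w$ so that the troublesome $C_2\inf$ term becomes $o(\log\frac1{|y|})$, with the remaining Wolff term handled outside a thin set by the refined capacity estimate (where the du Bois-Reymond choice of $\zeta_i$ is what upgrades boundedness to convergence). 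Your dyadic capacitary weak-type estimate is the right idea for thinness of $E$ (modulo the power: the bound is mass$/\lambda^{n-1}$, not mass$/\lambda$), but it is only one of the four steps, and the uniqueness-of-limits machinery that it must be combined with is missing from your proposal.

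The completeness step also has a real gap. From the liminf you only get a \emph{sequence} $x_j \to 0$ with $w(x_j) \le (m+o(1))\log\frac1{|x_j|}$; this does not produce a curve of finite length reaching the origin when $m<1$, since $w$ can be arbitrarily large between the $x_j$. What is needed is an upper bound for $w$ along an entire path into the origin, i.e.\ a ray (or curve of comparable Euclidean length) contained in the complement of the exceptional set $E$, along which the limit statement applies. The paper supplies exactly this: using conformal invariance of $n$-capacity, the monotonicity of capacity under the Lipschitz radial projection onto the unit sphere (Adams--Hedberg, Theorem 5.2.1), and the thinness series for $E$, it shows the projections of $E\cap\omega_i$ have capacity summing to something small, so a ray $P$ with $P\cap E\cap B(0,r_0)=\emptyset$ exists; completeness along $P$ then forces $\int_0^{r_0} r^{-(m+\varepsilon)}dr=\infty$ and hence $m\ge 1$. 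Without such a ray-selection argument your final step does not close.
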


The definition of $n$-superharmonic functions is given in Definition \ref{Def:superh}. 
The definition of $n$-thinness is given in Definition \ref{Def:n-thin}, which is inspired by and extends 
the definition of thinness in \cite{AH} (see the discussion on the comparison of different notions of thinness in Section 
\ref{Subsect:2 dimensions revisit}).  The proof of Theorem \ref{Thm:main theorem-1-intro} combines the 
blow-down argument from \cite{KV} and the nonlinear potential theory \cite{AM, HKM, KM, Lind, PV} for $n$-Laplace equations, particularly
the use of the Wolff potential and $n$-capacity estimates.
\\
 
The proof of Theorem \ref{Thm:main theorem-1-intro} consists of four major steps. The first is to 
use nonlinear potential theory, particularly \cite[Theorem 1.6 and Lemma 3.9]{KM} on the Wolff potential and $n$-capacity estimates
(cf. \eqref{Equ:potential-estimate}) to show that, the blow-down quotient 
\begin{equation}
w_r(\xi) = \frac {w(r\xi)}{\log\frac 1{|r|}}
\end{equation} 
is bounded outside a subset $\hat E$ that is $n$-thin. The second step is to use a 
cut-off technique from \cite{DHM} to modify and cut off the unbounded part in order to take sequential limit for the 
blow-down quotients as $r\to 0$. Based on Liouville Theorem of \cite{Serrin-1, Red, HKM}, one knows that the sequential limits 
are all constants. 
In the third step, we use comparison principle (cf. \cite[Lemma 3.1]{Tol} and \cite{KV, KV-e}) to conclude that all sequential limits 
have to be the same as $m$ in the Theorem \ref{Thm:main theorem-1-intro}.  In the final step, based on the uniqueness of 
sequential limits, we re-run the proof in the first step to extract a subset $E$ that is $n$-thin and finish the proof of
Theorem \ref{Thm:main theorem-1-intro}. The first run of the argument in the first step is to get bounds; while the second run 
is to get uniform convergences. It is essential and very interesting to see how the classic Paul
du Bois-Reymond Theorem (cf. \cite{Rey} and \cite[(5) Page 40]{Brom}) 
for infinite series helps to re-enforce the argument in the first run in applying
\cite[Theorem 1.6 and Lemma 3.9]{KM} (cf. \eqref{Equ:improved-potential-estimate}) to get the uniform convergence.
\\

Our second goal is to eliminate nontrivial $n$-thin subsets $E$ in Theorem \ref{Thm:main theorem-1-intro}. This theorem in general
dimensions is inspired by and extends the work of Taliaferro in \cite{Tal, Tal-1, Tal-2}. 

\begin{theorem}\label{Thm:main theorem-2-intro}
Let $w \in C^2(B(0, 2)\setminus \{0\})$ be nonnegative and satisfy
\begin{equation}
 -\Delta_n w = f(x, w, \nabla w)
\end{equation}
in $B(0, 2)\setminus\{0\} \subset\mathbb{R}^n$ and that
$$
\lim_{x\to 0} w(x) = +\infty, 
$$ 
where $f$ satisfies the critical growth condition
\begin{equation}
0 \leq f (x, w, \nabla w) \leq C|\nabla w|^{n-2}e^{2w}
\end{equation}
for some fixed constant $C$. Then 
\begin{equation}
\lim_{x \rightarrow 0}\frac{w(x)}{\log\frac{1}{|x|}}= m \geq 0
\end{equation}
and
\begin{equation}
w(x)\geq m\log\frac{1}{|x|} - L \ \text{ for $x\in B(0, 1)\setminus \{0\}$}
\end{equation}
for some constant $L$. 
\end{theorem}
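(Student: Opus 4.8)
The plan is to read off the lower bound directly from Theorem~\ref{Thm:main theorem-1-intro} and then to eliminate the exceptional set by a Brezis--Merle type argument carried out with the Wolff potential. To begin, extend $w$ by $w(0):=\liminf_{x\to0}w(x)=+\infty$; then $w$ is nonnegative, lower semi-continuous and $n$-superharmonic on $B(0,2)$, and $-\Delta_n w=\mu\ge0$ is a Radon measure equal to $f(x,w,\nabla w)\,dx$ away from the origin. Theorem~\ref{Thm:main theorem-1-intro} then supplies an $n$-thin set $E$, a number $m\ge0$, and a constant $L$ with
\[
w(x)\ge m\log\tfrac1{|x|}-L\ \text{ on }B(0,1)\setminus\{0\},\qquad
\lim_{x\notin E,\ |x|\to0}\frac{w(x)}{\log(1/|x|)}=\liminf_{|x|\to0}\frac{w(x)}{\log(1/|x|)}=m .
\]
This is already the second displayed conclusion, and it reduces Theorem~\ref{Thm:main theorem-2-intro} to proving $\limsup_{x\to0}w(x)/\log(1/|x|)\le m$, i.e.\ that $E$ may be taken empty.

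Write $\mu=c_0\delta_0+\mu_1$ near the origin, with $c_0=\mu(\{0\})$ and $\mu_1=f\,dx$ on $B(0,1)\setminus\{0\}$ a finite Radon measure (so $\mu_1(B(0,r))\to0$ as $r\to0$); the blow-down analysis behind Theorem~\ref{Thm:main theorem-1-intro} identifies $m$ with $c_0^{1/(n-1)}$ up to the normalization of the Wolff potential, so that $\mathbf{W}^{\mu}_{1,n}(x,1)=m\log(1/|x|)+o(\log(1/|x|))$ once the diffuse contribution $\mathbf{W}^{\mu_1}_{1,n}$ is controlled. Controlling it is precisely the role of the critical growth condition $0\le f\le C|\nabla w|^{n-2}e^{2w}$. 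I would bootstrap: from any preliminary bound $w(x)\le A\log(1/|x|)+C$ (first obtained from the $n$-capacity and Wolff-potential estimates of \cite[Theorem~1.6 and Lemma~3.9]{KM} together with the $n$-thinness of $E$, which places $\inf_{B(x,|x|/2)}w$ on the good set), interior $C^{1,\alpha}$ estimates for $n$-Laplace equations with natural exponential growth give $|{-}\Delta_n w|\lesssim(\log(1/|x|))^{n-2}|x|^{-(n-2)-2A}$ near $0$, hence quantitative decay of $\mu_1(B(0,t))$; feeding this back into the estimates of \cite{KM} together with the Adams--Moser--Trudinger inequality for the Wolff potential --- the nonlinear analogue of \cite[Theorem~1]{BM}, which upgrades smallness of $\|\mu_1\|$ near the origin to high local integrability of $e^{2w}$ --- improves $A$, and iterating drives $A$ down to $m$. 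This gives $w(x)\le m\log(1/|x|)+o(\log(1/|x|))$, which with the lower bound proves $\lim_{x\to0}w(x)/\log(1/|x|)=m$. Equivalently, one may argue by contradiction: if $w(x_j)/\log(1/|x_j|)\to M>m$ along some $x_j\to0$, blow up at $x_j$ at scale $|x_j|$ via the \emph{scale-invariant} renormalization $\tilde u_j(\xi):=w(x_j+|x_j|\xi)-\log(1/|x_j|)$ --- for which the conformal scaling of $\Delta_n$ and the exponent $2$ in $e^{2w}$ reproduce the same inequality $|{-}\Delta_n\tilde u_j|\le C|\nabla\tilde u_j|^{n-2}e^{2\tilde u_j}$, while the rescaled measure has total mass $\mu(B(x_j,\kappa|x_j|))\to0$ for any $\kappa<1$ --- and use the non-concentration bound, the Liouville theorems of \cite{Serrin-1,Red,HKM}, and the comparison principle of \cite{Tol,KV,KV-e} to contradict the excess recorded by $M>m$.

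The crux, and the main obstacle, is this non-concentration step: formulating and proving the Adams--Moser--Trudinger inequality for the Wolff potential in dimension $n$, and then using it correctly. Two features make it harder than the planar situation of \cite{BM,Tal-2}. First, the nonlinearity carries the extra factor $|\nabla w|^{n-2}$, absent in two dimensions, so gradient estimates for quasilinear equations with natural growth have to be woven into the potential-theoretic argument. Second, the bootstrap linking ``$w$ grows at most logarithmically, with coefficient close to $m$'' to ``$-\Delta_n w$ has small diffuse mass near $0$'' must remain effective even when $m$ is large, where the model nonlinearity $|\nabla w|^{n-2}e^{2w}$ is far from integrable near the origin and the control of $\mu_1$ cannot be obtained by directly integrating the critical bound, but must be extracted from $\mu$ being a priori a Radon measure together with the scale invariance of the renormalized problem.
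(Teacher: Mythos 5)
Your high-level architecture is the same as the paper's (get the lower bound and the liminf characterization from Theorem \ref{Thm:AH-1}, then use an Adams--Moser--Trudinger inequality for the Wolff potential to rule out the exceptional set), but the two quantitative steps on which your plan rests are exactly the ones you have not supplied, and the first one is asserted on false grounds. The preliminary uniform bound $w(x)\le A\log\frac1{|x|}+C$ does \emph{not} follow from \cite[Theorem 1.6, Lemma 3.9]{KM} together with the $n$-thinness of $E$: thinness of $E$ only controls the term $\inf_{B(x,|x|/2)}w$ in the Kilpel\"ainen--Mal\'y upper bound, while the other term, $W^{\mu}_{1,n}(x,\cdot)$, can a priori be much larger than $\log\frac1{|x|}$ precisely at points of $E$ --- that is what a nonempty thin bad set in Theorem \ref{Thm:AH-1} means. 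Bounding that Wolff term everywhere is the actual content of Lemma \ref{Lem:no thin set} (the analogue of Taliaferro's Theorem 2.3), and in the paper it already requires the critical growth condition, Proposition \ref{Pro:brezis-merle}, the $L^{n-\frac12}$ gradient bound for $n$-superharmonic functions, and a splitting of the Wolff integral at a radius $\rho_k=e^{-(n-1)^2C_5(n)\lambda_k}$. So your bootstrap has no legitimate starting point.

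Even granting a starting bound, the bootstrap mechanism does not close. The pointwise estimate $|\nabla w|\lesssim \log\frac1{|x|}\,/\,|x|$ is not a consequence of standard interior $C^{1,\alpha}$ theory here, since the right-hand side has natural growth in $\nabla w$ and the oscillation of $w$ on the annulus $|x|\sim r$ is of size $(A-m)\log\frac1r$, which is unbounded; and even with it, integrating $f\lesssim |x|^{-(n-2)-2A}(\log\frac1{|x|})^{n-2}$ gives decay of $\mu_1(B(0,t))$ only when $A<1$, a limitation you yourself flag without resolving. The paper never uses pointwise gradient bounds: it inserts the KM upper bound into $e^{2w}$, applies H\"older, Proposition \ref{Pro:brezis-merle} (usable because $\|f\|_{L^1(B(x,|x|/2))}\to0$ as $x\to0$, so the exponential constant is harmless), and the $L^{n-\frac12}$ gradient estimate to get $\mu(B(x,s))\le C|x|^{-2C_2(\gamma^-+1)}s^{\theta}$ with $\theta=\frac{n(n-2)}{2(n-1)(n-\frac12)}$, and then splits $W^{\mu}_{1,n}(x,\frac12|x|)$ at $\rho=|x|^{\text{large power}}$ to obtain \eqref{Equ:no thin set-4} with no exceptional set. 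Your alternative blow-up argument is likewise incomplete: after the additive renormalization $\tilde u_j(\xi)=w(x_j+|x_j|\xi)-\log\frac1{|x_j|}$ the functions are not uniformly bounded (e.g. $\tilde u_j(0)\sim(M-1)\log\frac1{|x_j|}$), so Liouville and comparison cannot be applied directly; the contradiction in Lemma \ref{Lem:no thin set} is instead the quantitative clash between $W^{\mu_{g_k}}_{1,n}(0,2)/\lambda_k\to\infty$ (from the KM lower bound and Lemma \ref{Lem:inf estimate}) and the upper bound $W^{\mu_{g_k}}_{1,n}(0,2)\le C+C\lambda_k$ extracted from the growth condition via Proposition \ref{Pro:brezis-merle}. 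In short, you have correctly reduced the theorem to the non-concentration step, but that step --- the heart of the proof --- is missing.
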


The essential ingredient in the proof of Theorem \ref{Thm:main theorem-2-intro} is a type of Brezis-Merle inequality 
\begin{equation}\label{Equ:b-m}
\int_{\Omega} {\rm exp}( {\frac{n(1-\delta)W_{1,n}^{\mu_{f}}(x,D)}{\|f\|_{L^{1}(\Omega)}^\frac 1{n-1}}})dx\leq 
\frac {c(n)2^{2n+1} |B(0, D)|}{ \delta^{n+1}} + 2^n |\Omega|. 
\end{equation}
for the Wolff potential
$$
W_{1, n}^{\mu_f}(x, D) = \int_0^D \mu_f(B(x, s))^\frac 1{n-1}\frac {ds}s
$$
with 
$$
\mu_f (A) = \int_{A\cap\Omega} f(x)dx
$$
induced by a nonnegative function in $L^1(\Omega)$, where $D$ is the diameter of $\Omega$ and $\delta\in (0, 1)$. \eqref{Equ:b-m}
is stated in Proposition \ref{Pro:brezis-merle} and 
extends the one discovered by Brezis-Merle in 2 dimensions  (cf. \cite{BM, FM, Io} and references therein). 
This Adams-Moser-Trudinger inequality \eqref{Equ:b-m} for the Wolff potential 
helps control any possible concentration and rule out any 
possible nontrivial $n$-thin subset $E$ in Theorem \ref{Thm:main theorem-1-intro}. As stated in Remark \ref{Rem:critical growth},
the critical growth condition may be described as
\begin{equation}\label{Equ:critical growth-intro}
0 \leq f(x, w, \nabla w) \leq C |\nabla w|^p e^{\alpha w}
\end{equation}
for any $0 < p < n$ and $\alpha > 0$ to be more general. Our proof of Theorem \ref{Thm:main theorem-2-intro} is a streamlined one
from \cite{Tal, Tal-1, Tal-2} with the help of the Adams-Moser-Trudinger inequality \eqref{Equ:b-m} for the Wolff potential.
\\

As applications we first want to study asymptotic behavior at the end of complete locally conformally flat manifolds with nonnegative 
Ricci. After the classification theorems of \cite{Zhu, CH-1}, we want to focus on complete metrics $e^{2\phi}|dx|^2$ on $\r^n$. One may 
calculate and find that
\begin{equation}\label{Equ:extends gauss equation}
- \Delta_n \phi = \text{Ric}_g(\nabla^g \phi) |\nabla\phi|^{n-2}e^{2\phi},
\end{equation}
where $\text{Ric}_g(\nabla\phi)$ is the Ricci curvature of the metric $g= e^{2\phi}|dx|^2$ in $\nabla^g \phi$ direction. 
\eqref{Equ:extends gauss equation} is clearly a generalization of Gauss curvature equations in higher dimensions.

\begin{theorem}\label{Thm:main theorem-3-intro}
Suppose that $(\mathbb{R}^{n},e^{2\phi}|dx|^{2})$
is complete with nonnegative Ricci, where $\phi$ is a smooth function. Then there is a subset
$E\subset\mathbb{R}^{n}$, which is $n$-thin
at infinity, such that 
\begin{equation}
\lim_{x\notin E\text{ and } x \to\infty}\frac{\phi(x)}{\log\frac{1}{|x|}}=\liminf_{x\to\infty}\frac{\phi(x)}{\log\frac{1}{|x|}}=m
\end{equation}
and 
\begin{equation}
\phi(x)\geq m\log\frac{1}{|x|} - L
\end{equation}
for some constant $L$, where $m\in [0, 1]$ and
\begin{equation}
m^{n-1} =\frac 1{w_{n-1}} \int_{\mathbb{R}^{n}}\text{Ric}_{g}(\nabla^{g}\phi)|\nabla\phi|^{n-2}e^{2\phi}dx.
\end{equation}
Moreover, 
\begin{itemize}
\item $m=0$ if and only if $g = e^{2\phi}|dx|^2$ is flat;
\item if $\text{Ric}_{g}$ is bounded in addition, then 
\begin{equation}
\lim_{x\to\infty}\frac{\phi(x)}{\log\frac{1}{|x|}}=\liminf_{x\to\infty}\frac{\phi(x)}{\log\frac{1}{|x|}}=m.
\end{equation}
\end{itemize}
\end{theorem}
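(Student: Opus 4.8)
\textbf{Proof proposal for Theorem \ref{Thm:main theorem-3-intro}.}

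The plan is to transfer the geometric statement about $(\mathbb{R}^n, e^{2\phi}|dx|^2)$ into the analytic framework of Theorem \ref{Thm:main theorem-1-intro} after an inversion. First I would perform the Kelvin-type coordinate change $y = x/|x|^2$, so that the point at infinity in the $x$-picture becomes the origin in the $y$-picture; under this conformal inversion the metric $e^{2\phi}|dx|^2$ becomes $e^{2w}|dy|^2$ for $w(y) = \phi(y/|y|^2) - 2\log|y| + (\text{const})$ (the $n$-Laplacian is conformally natural in the right sense, so $w$ is $n$-superharmonic near $0$ precisely because of \eqref{Equ:extends gauss equation} and $\mathrm{Ric}_g \geq 0$, which forces $-\Delta_n \phi \geq 0$ and hence $-\Delta_n w = \mu \geq 0$ for a nonnegative Radon measure $\mu$). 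Completeness of the original metric at infinity translates into completeness of $e^{2w}|dy|^2$ at the origin, and lower semicontinuity together with nonnegativity of $w$ (after adding a constant) must be checked, but these follow from the structure of the inversion and the fact that a complete metric cannot decay too fast. Then Theorem \ref{Thm:main theorem-1-intro} applies directly and yields the $n$-thin set $E$ in the $y$-picture with $\lim_{y \notin E, |y|\to 0} w(y)/\log\frac{1}{|y|} = \liminf w(y)/\log\frac{1}{|y|} = \tilde m \geq 1$, the lower bound $w(y) \geq \tilde m \log\frac{1}{|y|} - C$, together with $\tilde m \geq 1$ from the completeness clause. Unwinding $w = \phi - 2\log|y| + c$ and $\log\frac{1}{|y|} = \log|x|$ gives $\phi(x)/\log\frac{1}{|x|} \to \tilde m - 2 =: m$ along $x \notin E$ (the image of the $n$-thin set, which is $n$-thin at infinity by conformal invariance of $n$-capacity), the companion lower bound $\phi(x) \geq m\log\frac{1}{|x|} - L$, and $m = \tilde m - 2 \geq -1$; the upper bound $m \leq 1$ I expect to come from a volume/Bishop-Gromov comparison (nonnegative Ricci forces at most Euclidean volume growth, which caps how fast $e^{2\phi}$ can grow, i.e. how negative $m$ can be — wait, one must be careful with signs here, since $m \leq 1$ should correspond to $\tilde m \geq 1$, already in hand; the genuinely new content is $m \leq 1$, equivalently a lower bound on $\tilde m$ — no: $\tilde m \geq 1$ gives $m \geq -1$, so $m \leq 1$ needs a separate argument, presumably the finiteness of total curvature).

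Next I would establish the integral formula $m^{n-1} = \frac{1}{\omega_{n-1}} \int_{\mathbb{R}^n} \mathrm{Ric}_g(\nabla^g\phi)|\nabla\phi|^{n-2}e^{2\phi}\,dx$. The idea is to integrate \eqref{Equ:extends gauss equation} over $\mathbb{R}^n$, i.e. $\int_{\mathbb{R}^n} (-\Delta_n \phi)\,dx = \mu(\mathbb{R}^n)$, and to compute the total mass of $\mu$ from the asymptotics of $\phi$: since $\phi \sim m \log\frac{1}{|x|}$ at infinity, the flux $\int_{\partial B_R} |\nabla\phi|^{n-2}\frac{\partial\phi}{\partial\nu}$ through large spheres tends to $-\omega_{n-1} m^{n-1}$ (as $\phi \sim m\log\frac{1}{|x|}$ means $\nabla\phi \sim -\frac{m}{|x|^2}x$ radially, so $|\nabla\phi|^{n-2}\partial_\nu\phi \sim -m^{n-1}|x|^{-(n-1)}$ on $\partial B_R$, and the sphere has area $\omega_{n-1}R^{n-1}$), giving $\mu(\mathbb{R}^n) = \omega_{n-1}m^{n-1}$; the subtlety is that this asymptotic only holds off the $n$-thin set $E$, so I would need to argue, using the $n$-capacity smallness of $E$ and the monotonicity of the $n$-harmonic flux, that the exceptional set contributes nothing to the flux integral in the limit. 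This also forces $m \geq 0$ (the integrand is nonnegative since $\mathrm{Ric}_g \geq 0$), consistent with the earlier bound, and the equivalence $m = 0 \iff g$ flat: if $m = 0$ then $\int \mathrm{Ric}_g(\nabla^g\phi)|\nabla\phi|^{n-2}e^{2\phi} = 0$, so $\mathrm{Ric}_g(\nabla^g\phi) \equiv 0$, hence $-\Delta_n\phi \equiv 0$; then $\phi$ is $n$-harmonic on all of $\mathbb{R}^n$ and bounded above by $L$ (from the lower bound with $m=0$ it is bounded below by $-L$... actually $\phi \geq -L$), and a Liouville theorem (\cite{Serrin-1, Red, HKM}, already invoked in the excerpt) forces $\phi$ constant, so $g$ is flat; conversely flatness gives $\mathrm{Ric}_g \equiv 0$ so the integral vanishes and $m = 0$.

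Finally, for the improvement under the extra hypothesis that $\mathrm{Ric}_g$ is bounded, the goal is to remove the exceptional set $E$ entirely and upgrade $\liminf$ to $\lim$. Here I would apply Theorem \ref{Thm:main theorem-2-intro}: in the $y$-picture, $-\Delta_n w = f(y, w, \nabla w)$ where, from \eqref{Equ:extends gauss equation} and the conformal transformation, $f$ is (a bounded multiple of) $|\nabla^g\phi|^{n-2}\mathrm{Ric}_g e^{2w}$ expressed in $y$-coordinates; boundedness of $\mathrm{Ric}_g$ together with the relation $|\nabla^g\phi|_{e^{2w}} = e^{-w}|\nabla_y w|$ (conformal scaling of the gradient, after absorbing lower-order terms from the inversion) yields exactly the critical growth bound $0 \leq f(y, w, \nabla w) \leq C|\nabla w|^{n-2}e^{2w}$ required in Theorem \ref{Thm:main theorem-2-intro}. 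One also needs $w(y) \to +\infty$ as $y \to 0$, which holds because $\tilde m \geq 1 > 0$ gives $w \gtrsim \log\frac{1}{|y|} \to \infty$ at least along $y\notin E$, and combined with the lower bound $w(y) \geq \tilde m\log\frac{1}{|y|} - C$ valid for \emph{all} $y$, one gets $w \to +\infty$ unconditionally. Theorem \ref{Thm:main theorem-2-intro} then gives $\lim_{y\to 0} w(y)/\log\frac{1}{|y|} = \tilde m$ with no exceptional set, which transfers back to $\lim_{x\to\infty}\phi(x)/\log\frac{1}{|x|} = \liminf_{x\to\infty}\phi(x)/\log\frac{1}{|x|} = m$. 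I expect the main obstacle to be the bookkeeping around the conformal inversion: verifying that the transformed function $w$ is genuinely $n$-superharmonic with a nonnegative Radon measure on the right side, that the lower-order terms generated by writing $\phi(y/|y|^2) - 2\log|y|$ do not spoil either the critical growth estimate or the completeness-at-origin condition, and that the exceptional set really does not see the flux at infinity — this last point (showing $\int_{\partial B_R \setminus E} \to \int_{\partial B_R}$ in the limit) requires the $n$-capacity characterization of $n$-thinness in an essential way.
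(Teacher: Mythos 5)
Your overall strategy (invert via $y=x/|x|^2$, apply Theorem \ref{Thm:main theorem-1-intro} to $w(y)=\phi(y/|y|^2)-2\log|y|$, upgrade with Theorem \ref{Thm:main theorem-2-intro} when $\mathrm{Ric}_g$ is bounded, and use a Liouville theorem when $m=0$) is the paper's strategy, but your proof of the key identity $m^{n-1}=\frac1{w_{n-1}}\int_{\mathbb{R}^n}\mathrm{Ric}_g(\nabla^g\phi)|\nabla\phi|^{n-2}e^{2\phi}dx$ has a genuine gap. You compute the total mass as a limit of fluxes $-\int_{\partial B_R}|\nabla\phi|^{n-2}\partial_\nu\phi$ using the ansatz $\nabla\phi\sim -m\,x/|x|^2$, but Theorem \ref{Thm:main theorem-1-intro} controls only $\phi$ itself (two-sidedly only off an $n$-thin set, and with an $o(\log|x|)$ error; everywhere one has only the lower bound $\phi\geq m\log\frac1{|x|}-L$), and for a degenerate quasilinear equation whose right-hand side is merely an $L^1$/measure datum there is no elliptic bootstrap turning $C^0$ asymptotics into the pointwise gradient asymptotics you need on spheres; "monotonicity of the $n$-harmonic flux" is unavailable because $\phi$ is not $n$-harmonic outside a compact set. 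The thin set is not the main obstruction — the missing gradient control at generic boundary points is. The paper avoids gradient estimates altogether by integrating over the level-set exhaustions $\Omega^{\pm}_{\varepsilon,t}$ cut out by the explicit comparison functions $G^{\pm}_{\varepsilon,t}(x)=-(m\pm\varepsilon)\max\{\log|x|,0\}\pm t$: on $\partial\Omega^{\pm}_{\varepsilon,t}$ one has $\phi=G^{\pm}_{\varepsilon,t}$, so tangential derivatives agree and a pointwise sign comparison of normal derivatives gives $|\nabla\phi|^{n-2}\partial_{\vec n}\phi\geq|\nabla G^{+}_{\varepsilon,t}|^{n-2}\partial_{\vec n}G^{+}_{\varepsilon,t}$ (and the reverse for $G^-$), while the flux of $G^{\pm}_{\varepsilon,t}$ is computed exactly, yielding the two-sided bound $(m\pm\varepsilon)|m\pm\varepsilon|^{n-2}w_{n-1}$; the thinness of $E$ enters only through a capacity/Gehring-type lower bound to show $\Omega^{-}_{\varepsilon,t}$ is bounded, so these domains really exhaust $\mathbb{R}^n$. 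Without this construction (or a genuine substitute) your flux computation does not close — and the identity is also what delivers $m\geq0$ and the rigidity $m=0\Rightarrow g$ flat, so it cannot be bypassed.

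Two smaller problems. First, a sign slip: with $w(y)=\phi(y/|y|^2)-2\log|y|$ and $|x|=1/|y|$ one gets $\frac{w(y)}{\log\frac1{|y|}}=2-\frac{\phi(x)}{\log\frac1{|x|}}$, so $m=2-\tilde m$, not $\tilde m-2$; hence the completeness clause $\tilde m\geq1$ immediately gives $m\leq1$, and no Bishop--Gromov or "finiteness of total curvature" argument is needed (your proposal leaves $m\leq1$ unproved). Second, to apply Theorem \ref{Thm:main theorem-1-intro} you must know that $w$ extends as an $n$-superharmonic function across the origin with $-\Delta_n w$ a nonnegative Radon measure on a full ball; a priori $w$ lives only on the punctured ball. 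The paper first gets $\lim_{y\to0}w(y)=+\infty$ from completeness and nonnegative scalar curvature (via \cite[Proposition 8.1]{CHY}) and then invokes Theorem \ref{Thm:B-V} to write $-\Delta_n w=g+\beta\delta_0$ as a Radon measure; your remark that "a complete metric cannot decay too fast" does not substitute for this step, and the same $\lim_{y\to0}w=+\infty$ is the hypothesis Theorem \ref{Thm:main theorem-2-intro} needs in the bounded-Ricci case. The bounded-Ricci improvement and the Liouville argument for $m=0$ are otherwise in line with the paper.
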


This theorem gives some precise description of asymptotic end behavior. More importantly it also includes a rigidity result that does 
not assume Ricci is bounded. The rigidity result in this theorem should be compared with \cite[Theorem 0.3]{Cold} and 
\cite{BKN, CZ, CH-1}. It is particularly desirable to compare the blow-down approaches here and those 
in \cite{BKN, Cold, CH-1}. The proof appeals to
Theorem \ref{Thm:main theorem-1-intro} and Theorem \ref{Thm:main theorem-2-intro}. But it is not straightforward at all to calculate $m$ and
derive the rigidity, especially when Ricci is not assumed to be bounded. Our argument relies on the ingenious construction of exhausting 
family of domains to perform integrations (please see $\Omega_{\varepsilon, t}^\pm$ in the proof of Theorem \ref{Thm:main theorem-g}
in Section \ref{Sec:conformally flat}). 
\\

Our second application is to study the asymptotic behavior at the end of properly embedded complete hypersurfaces with nonnegative
Ricci curvature in hyperbolic space. It was shown in \cite[Main Theorem]{BMQ-r} that such hypersurfaces have at most two ends, 
and are equidistant hypersurfaces if with two ends. Based on Theorem \ref{Thm:main theorem-1-intro}, we are able to improve the 
theorems on asymptotic at infinity in \cite{AlCu, AlCu2} assuming only Ricci to be nonnegative.

\begin{theorem}\label{Thm:main theorem-4-intro}
Suppose that $\Sigma^n$ is a properly embedded, complete hypersurface with nonnegative Ricci curvature and one single end in
hyperbolic space $\h^{n+1}$. Then it is a global graph of $\rho= \rho(x)$ in Busemann coordinates and it is asymptotically rotationally 
symmetric in the sense that there is a number $m\in[0,1]$ such that 
\begin{equation}
m\log|x|+o(\log|x|)\leq \rho(x)\leq m\log|x|+C
\end{equation}
as $x \to \infty$ in $\r^n$. Moreover, $m=0$ implies that the hypersurface is a horosphere.  In any case, the
hypersurface $\Sigma$ always stays inside a horosphere and is supported by some equidistant hypersurface. 
\end{theorem}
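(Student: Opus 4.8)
The plan is to translate the hypersurface problem into the conformally flat setting covered by Theorem \ref{Thm:main theorem-1-intro} and Theorem \ref{Thm:main theorem-2-intro}, and then use the geometry of hyperbolic space to pin down the constant $m$ and the supporting surfaces. First I would recall that in the upper half-space (Busemann) model $\h^{n+1} = \{(x, \rho) : x \in \r^n, \rho > 0\}$ with metric $\rho^{-2}(|dx|^2 + d\rho^2)$, a properly embedded complete hypersurface with a single end must, by the one-end analysis in \cite{BMQ-r} and properness, project injectively onto $\r^n$ minus at most a compact set and in fact be a global graph $\rho = \rho(x)$; the subtlety is to show the graph is entire, which should follow from properness together with the fact that a second region of non-graphicality would create a second end. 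The induced metric on $\Sigma$ written in the $x$-coordinates is then conformal to a metric of the form $e^{2\phi}|dx|^2$ after a change that absorbs the graph gradient, with $\phi$ essentially $-\log\rho$ up to a bounded correction coming from $\sqrt{1 + |\nabla\rho|^2}$; here one needs the nonnegative Ricci hypothesis to control the gradient term, and the standard fact (used in \cite{AlCu, AlCu2}) that nonnegative Ricci plus properness forces the graph to have sublinear-in-a-suitable-sense gradient, so that the conformal factor is comparable to $1/\rho$.

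Next I would feed $\phi$ (or rather the function $w(x) = \phi(1/|x|^2 \cdot x)$ after an inversion sending the end to the origin, since Theorem \ref{Thm:main theorem-1-intro} is stated at an isolated singularity) into Theorem \ref{Thm:main theorem-1-intro}: nonnegative Ricci curvature of $\Sigma$ gives, via the higher-dimensional Gauss equation \eqref{Equ:extends gauss equation} adapted to hypersurfaces in $\h^{n+1}$, that $-\Delta_n w = \mu \ge 0$ for a nonnegative Radon measure, and completeness at the end translates to completeness of $e^{2w}|dx|^2$ at the origin. Theorem \ref{Thm:main theorem-1-intro} then yields an $n$-thin exceptional set $E$, a constant $m \ge 1$ (from the completeness clause) along the complement, and the lower bound $w \ge m\log\frac{1}{|x|} - C$. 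Undoing the inversion, this is exactly $m\log|x| + o(\log|x|) \le \rho(x) \le m\log|x| + C$ once I also establish the matching upper bound; the upper bound should come from the fact that $\Sigma$ stays on one side of a horosphere, which is itself a consequence of the maximum principle comparing $\Sigma$ with the family of horospheres $\rho = $ const and using that Ricci $\ge 0$ forbids $\Sigma$ from being asymptotic to the boundary sphere faster than logarithmically. To upgrade the liminf to a genuine limit and to remove $E$ I would invoke Theorem \ref{Thm:main theorem-2-intro}: the right-hand side of the Gauss-type equation has the critical form $C|\nabla w|^{n-2}e^{2w}$ because the second fundamental form of a hypersurface in $\h^{n+1}$ with nonnegative Ricci is controlled (the ambient curvature is $-1$ and Gauss's equation bounds $|A|^2$ linearly in terms of curvatures), so $f(x, w, \nabla w)$ satisfies \eqref{Equ:critical growth-intro}.

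Finally, for the rigidity statement $m = 0 \Rightarrow$ horosphere: if $m = 0$ then by the equality-case discussion analogous to the conformally flat Theorem \ref{Thm:main theorem-3-intro} the total mass $\mu_f(\r^n) = w_{n-1} m^{n-1} = 0$, so $f \equiv 0$, which by \eqref{Equ:extends gauss equation} forces the intrinsic metric to be flat; a complete flat properly embedded hypersurface with nonnegative (hence zero) Ricci in $\h^{n+1}$ that is a graph over all of $\r^n$ must be a horosphere, since the only complete totally umbilic or flat examples asymptotic to a single point at infinity are horospheres (alternatively, $\rho$ bounded below by completeness plus $\phi$ harmonic-flat forces $\rho = $ const up to isometry). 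The claim that $\Sigma$ is supported by an equidistant hypersurface is the geometric repackaging of the lower bound $w \ge m\log\frac1{|x|} - C$: an equidistant hypersurface to a totally geodesic hyperplane appears in Busemann coordinates as $\rho = c|x|$ for a constant $c$ determined by the tilt, and the inequality $\rho(x) \ge m|x|^{\text{(appropriate power)}} e^{-C}$ says precisely that $\Sigma$ lies on the far side of such a surface; combined with the horosphere bound on the other side this traps $\Sigma$ in the stated region.

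The main obstacle I expect is the translation step: carefully verifying that the induced metric on the graph is conformal to $e^{2w}|dx|^2$ with a conformal factor comparable (not just asymptotically equal) to a power of $1/\rho$, and that the resulting $n$-Laplace equation has \emph{both} a nonnegative right-hand side (needed for Theorem \ref{Thm:main theorem-1-intro}) \emph{and} the critical growth bound (needed for Theorem \ref{Thm:main theorem-2-intro}), uniformly up to the end. This requires a genuine computation of the Ricci curvature of a graph in $\h^{n+1}$ in terms of $\rho$ and its derivatives, together with an a priori gradient estimate for $\rho$ that uses Ricci $\ge 0$ — essentially a Cheeger–Gromov–Yau type or maximum-principle gradient bound adapted to this setting — to guarantee the gradient term $|\nabla\rho|$ does not spoil the comparability. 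Everything downstream (applying the two main theorems, reading off $m \in [0,1]$, the rigidity, and the supporting-surface description) is then a matter of bookkeeping with the inversion and the explicit horosphere/equidistant families in Busemann coordinates.
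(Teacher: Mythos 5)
Your plan hinges on two steps that do not work as stated, and both are exactly where the paper takes a different (and essential) route. First, the induced metric on a vertical graph $\rho=\rho(x)$ in Busemann coordinates is $e^{-2\rho}|dx|^2+d\rho\otimes d\rho$, which is \emph{not} conformal to $|dx|^2$ unless $\nabla\rho$ vanishes; there is no change of variables that ``absorbs the graph gradient'' into a conformal factor comparable to a power of $1/\rho$, so you cannot feed the hypersurface into the conformally flat machinery of Theorem \ref{Thm:main theorem-3-intro} via equation \eqref{Equ:extends gauss equation}. The mechanism the paper actually uses is the result of \cite[Theorem 3.1]{BMQ-r} quoted in Section \ref{Subsect:hypersurfaces}: nonnegative Ricci makes the height function itself $n$-subharmonic for the \emph{flat} $n$-Laplacian on $\r^n$. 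One then changes the point at infinity so the end sits at the origin, and applies Theorem \ref{Thm:main theorem-1-intro} directly to $-\tau$, where $\tau$ is the height in the new Busemann coordinates; no Gauss-type conformal equation, no gradient estimate for $\rho$, and no comparability of conformal factors is needed. (Your bookkeeping of the inversion is also off: one gets $m_1\geq 1$ for $-\tau$ and then $m=2-m_1\leq 1$ after translating back, and an equidistant hypersurface with one end at $p_\infty$ is $\rho=\log|x-x_0|+C$, not $\rho=c|x|$; the upper bound $\rho\leq m\log|x|+C$ with the sharp $m$ comes from the everywhere-valid lower bound in Theorem \ref{Thm:main theorem-1-intro} applied to $-\tau$, not from a horosphere comparison, which would only give the $m=1$ bound of Lemma \ref{Lem:under equidistant}.)

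Second, your removal of the $n$-thin exceptional set via Theorem \ref{Thm:main theorem-2-intro} requires the critical growth bound $0\leq f\leq C|\nabla w|^{n-2}e^{2w}$ for the relevant right-hand side, and you acknowledge but do not supply the needed a priori control; for a hypersurface with merely nonnegative Ricci (no bound on the second fundamental form or on Ricci) such a bound is not available, which is precisely why the paper does not invoke Theorem \ref{Thm:main theorem-2-intro} here. Instead, the paper rules out the thin set geometrically: nonnegative Ricci in $\h^{n+1}$ forces strict global convexity, and if the asymptotics failed along a sequence in $E$, then by the Gehring capacity lower bound (Lemma \ref{Lem:gehring}) one finds nearby radii outside $E$ at which the height is much larger, and the tangent hyperplane at such a nearby point would be violated by the low point, contradicting convexity. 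Also note that ``supported by some equidistant hypersurface'' is established by the inscribed rotationally symmetric graph comparison of Lemma \ref{Lem:under equidistant} (the upper bound $\rho\leq\log|x|+C$), not by reinterpreting the asymptotic lower bound, and in the proof of Lemma \ref{Lem:under equidistant} one must also show $\Omega=\r^n$, i.e.\ entirety of the graph, which your first paragraph asserts but does not prove. Your $m=0$ rigidity sketch is closest in spirit to the paper (total mass zero, then a Liouville theorem), but as written it again routes through the nonexistent conformal identification; the paper instead integrates $\Delta_n\rho$ over the exhausting domains of Section \ref{Sec:conformally flat} to get $|m|^{n-2}m\geq 0$, and when $m=0$ concludes $\rho$ is $n$-harmonic and bounded above, hence constant by \cite[Theorem 6.2 and Corollary 6.11]{HKM}.
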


The proof follows from the one in \cite{AlCu, AlCu2}, and in fact is simpler than the one in \cite{AlCu, AlCu2}, because
of Theorem \ref{Thm:main theorem-1-intro}. The use of $n$-subharmonic functions is more suitable than the use 
of subharmonic functions restricted to each 2-plane (cf. \cite{AlCu, AlCu2}). To eliminate any nontrivial $n$-thin set $E$ in 
Theorem \ref{Thm:main theorem-1-intro} in these cases, we use the strict convexity of the hypersurfaces, where the strong
$n$-capacity lower bound estimate of Gehring (cf. \cite[Lemma 1.4 page 212]{Red1} and \cite[Theorem 4]{G}) is used sharply.
\\

The rest of the paper is organized as follows: In Section \ref{Sec:preli}, we present definitions and basic facts that are useful. 
We describe what have been done in 2 dimensions to motivate our study in this paper. We also explain the opportunity for the 
use of $n$-superharmonic functions in geometric problems. In Section \ref{Sec:Arsove-Huber}, we define $n$-thinness by 
$n$-capacity and prove Theorem \ref{Thm:main theorem-1-intro}. In Section \ref{Sec:taliaferro}, we establish the 
Adams-Moser-Trudinger inequality for the Wolff potential and prove Theorem \ref{Thm:main theorem-2-intro}. 
In Section \ref{Sec:conformally flat}, we introduce the classification of complete locally conformally flat manifolds with 
nonnegative Ricci curvature and prove Theorem \ref{Thm:main theorem-3-intro}.
Finally, in Section \ref{Sec:hypersurfaces}, we recall the classification of complete properly embedded hypersurfaces with 
nonnegative Ricci curvature and prove Theorem \ref{Thm:main theorem-4-intro}.
 

\section{Preliminaries and background}\label{Sec:preli}

In this section,  after adopting the definitions of $n$-harmonic functions and $n$-superharmonic 
functions from \cite[Section 2]{KM}, we would like to first present a review of what have been done in 2 dimensions to 
motivate what we want to do in 
general dimensions. Then we would like to introduce some background and tools from the theory of quasilinear elliptic equations
and nonlinear potential theory that are useful to us. We also introduce the geometric problems that we expect to use 
$n$-superharmonic functions to study in this paper.


\subsection{Definitions of $n$-superharmonic functions}

We want to have a discussion on definitions of $n$-superharmonic functions first to clear any possible confusions caused by
terminology. Let us recall the definitions of $n$-superharmonic functions from \cite[Definition 2.5 and 2.12]{Lind} and \cite[Section 2]{KM}. 

\begin{definition} (\cite[Definition 2.5]{Lind}) 
For a domain $\Omega\subset\mathbb{R}^n$, a function $u\in W^{1, n}_{\text{loc}}(\Omega)$ is said to be weakly $n$-harmonic 
in $\Omega$ if
$$
\int |\nabla u|^{n-2}\nabla u\cdot \nabla\phi = 0
$$
for all $\phi\in C^\infty_c(\Omega)$. A weak $n$-harmonic function $u\in W_{\text{loc}}^{1, n}(\Omega)$
is said to be $n$-harmonic if it is continuous in $\Omega$.
\end{definition}

We know from \cite[Theorem 2.19]{Lind} that any weak $n$-harmonic function  always has a continuous representive and therefore $n$-harmonic. 
For further regularity of $n$-harmonic functions we refer readers to \cite{Lind} and references therein.
For the definitions of $n$-superharmonic functions, we first recall

\begin{definition} \label{Def:weak-superh} (\cite[Definition 2.12]{Lind}) For a domain $\Omega\subset \mathbb{R}^n$ and a function
$u \in W_{loc}^{1,n}(\Omega) $ satisfying 
\begin{equation}
\int \meta {|\nabla u|^{n-2}\nabla u}{\nabla \eta} dx \geq 0 \quad \text{for each} \,\,\eta \in C_{0}^{\infty}(\Omega) \ \text{and $\eta \geq 0$}
\end{equation}
is called a weak supersolution to $n$-Laplace equation in $\Omega$. A function $u$ is called a weak subsolution if $-u$ is a weak
supersolution.
\end{definition}

In the mean time, the following definition for $n$-superharmonic functions is often used in nonlinear potential theory. To avoid confusions,
we quote the following definition for $n$-superharmonic functions.

\begin{definition}\label{Def:superh} (\cite[Section 2]{KM}) 
For a domain $\Omega\subset\mathbb{R}^n$ and a lower semi-continuous function 
$$
u: \Omega \to (-\infty, +\infty]
$$
is said to be $n$-superharmonic in $\Omega$ if $u$ is not identically infinite in each connected component of $\Omega$, 
and if for all bounded open set $D\subset \bar D\subset \Omega$ and all $h\in C(\bar D)$ that is $n$-harmonic in $D$, $h \leq u$ in
$\partial D$ implies that $h\leq u$ in $D$. A function $u$ is said to be $n$-subharmonic in $\Omega$ if $-u$ is 
$n$-superharmonic in $\Omega$.
\end{definition}

Fortunately, the relations between these two definitions has been clarified very well in \cite{HeKi, KM}. For instance, we have

\begin{lemma} \label{Lem:equiv def}(\cite[Proposition 2.7]{KM})
\begin{itemize}
\item If $u$ is a weak supersolution to $n$-Laplace equation in $\Omega\subset \mathbb{R}^n$, then there is an $n$-superharmonic function 
$v$ such that $v = u$ a.e. in $\Omega$;
\item If $u$ is $n$-superharmonic in $\Omega$ and $u\in W^{1, n}_{\text{loc}}(\Omega)$, then $u$ is weak supersolution to $n$-Laplace equation;
\item If $u$ is $n$-superharmonic and locally bounded, then $u\in W^{1, n}_{\text{loc}}(\Omega)$ and is a weak $n$-supersolution to $n$-Laplace equation.
\end{itemize}
\end{lemma}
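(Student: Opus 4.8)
All three assertions are classical pieces of the dictionary between weak supersolutions and $n$-superharmonic functions, and the plan is to reproduce the standard arguments of nonlinear potential theory (see \cite[Chapters 3 and 7]{HKM} and \cite{HeKi}), taking for granted the regularity theory of $n$-harmonic functions, the weak Harnack inequality for supersolutions, the Caccioppoli inequality, and the basic solvability and ordering properties of the obstacle problem for the $n$-Laplace operator.

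For the first assertion, I would start from a weak supersolution $u\in W^{1,n}_{\mathrm{loc}}(\Omega)$ and pass to its lower semicontinuous representative
\[
v(x)=\lim_{r\to 0^{+}}\Big(\operatorname*{ess\,inf}_{B(x,r)}u\Big);
\]
the weak Harnack inequality gives $v=u$ almost everywhere and that $v$ is again a supersolution, so it remains only to verify the comparison property of Definition \ref{Def:superh}. Given a bounded open $D$ with $\overline D\subset\Omega$ and $h\in C(\overline D)$ that is $n$-harmonic in $D$ with $h\le v$ on $\partial D$, I would fix $\varepsilon>0$, observe that lower semicontinuity of $v$ together with continuity of $h$ forces $(h-\varepsilon-u)_{+}$ to vanish near $\partial D$ and hence to lie in $W^{1,n}_{0}(D)$, and then test the defining (in)equalities for $u$ and $h$ against $(h-\varepsilon-u)_{+}$; strict monotonicity of $\xi\mapsto|\xi|^{n-2}\xi$ then forces this function to be constant, hence zero, so $h-\varepsilon\le u$ a.e.\ in $D$. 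Taking essential infima over small balls and using the definition of $v$ upgrades this to $h-\varepsilon\le v$ everywhere in $D$, after which $\varepsilon\to 0$ finishes the argument.

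For the second assertion I would use the obstacle problem: since $u$ is $n$-superharmonic and in $W^{1,n}_{\mathrm{loc}}(\Omega)$, on any ball $B\Subset\Omega$ we have $u\in W^{1,n}(B)$, and I let $s$ minimise $\int_{B}|\nabla w|^{n}$ over $\{w\in W^{1,n}(B):w\ge u\ \text{a.e.},\ w-u\in W^{1,n}_{0}(B)\}$. The minimiser $s$ is a weak supersolution in $B$ with $s\ge u$, and the key point is that $s$ is the smallest $n$-superharmonic majorant of the obstacle compatible with the prescribed Sobolev boundary values; since $u$ itself is such a majorant, $s\le u$, so $s=u$ a.e., and $u$ is a weak supersolution in $B$, hence in $\Omega$. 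For the third assertion, with $u$ $n$-superharmonic and locally bounded, I would approximate $u$ from below on a ball $B\Subset\Omega$ by continuous functions $\psi_{j}\uparrow u$ and let $s_{j}$ solve the obstacle problem in $B$ with the now continuous obstacle and boundary data $\psi_{j}$; each $s_{j}$ is then a continuous supersolution, and the same minimality argument (with $u$ as competitor) gives $\psi_{j}\le s_{j}\le u$, so $s_{j}\uparrow u$ with uniform bounds. The Caccioppoli inequality then bounds $\int_{B'}|\nabla s_{j}|^{n}$ uniformly in $j$ for any $B'\Subset B$, a subsequence converges weakly in $W^{1,n}(B')$, and since $s_{j}\to u$ a.e.\ and in $L^{n}(B')$ the weak limit is $u$; thus $u\in W^{1,n}_{\mathrm{loc}}(\Omega)$, and the second assertion now gives that $u$ is a weak supersolution.

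I expect the real work to sit in the inputs rather than in the assembly above: the weak Harnack inequality (and with it the existence of the $\operatorname*{ess\,liminf}$ representative) and the foundational theory of the obstacle problem---existence, uniqueness and the supersolution property of $s$, continuity of $s$ when the obstacle is continuous, and its characterisation as the least $n$-superharmonic majorant. A secondary but pervasive difficulty, present in every step, is the reconciliation of pointwise lower semicontinuous boundary values with Sobolev traces, which is precisely what the $\varepsilon$-room in the first part and the choice of competitors in the last two are meant to absorb.
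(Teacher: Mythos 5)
The paper offers no proof of this lemma at all: it is quoted verbatim from \cite[Proposition 2.7]{KM}, with the underlying arguments living in \cite[Chapter 7]{HKM} and \cite{HeKi}. So the comparison is with those standard proofs. Your first item reproduces the standard argument correctly: the $\operatorname{ess\,inf}$-regularization via the weak Harnack inequality, the $\varepsilon$-room near $\partial D$ to make $(h-\varepsilon-u)_{+}$ an admissible test function, strict monotonicity of $\xi\mapsto|\xi|^{n-2}\xi$, and the upgrade from a.e.\ to everywhere through the definition of $v$. That part is fine.

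The genuine gap is in how you obtain the orderings $s\le u$ (second item) and $s_j\le u$ (third item). In the second item you appeal to a characterisation of the obstacle-problem solution as the \emph{least $n$-superharmonic majorant} compatible with the boundary values and then take $u$ as such a majorant. But the characterisation actually available in \cite{HKM} is as the smallest \emph{weak supersolution} $v$ lying above the obstacle with $\min(v,s)$ in the convex class; it requires the competitor to be a supersolution, which for $v=u$ is precisely what you are trying to prove, so the step is circular, and a version allowing merely $n$-superharmonic competitors is essentially equivalent to the lemma itself rather than a quotable black box. The problem is sharper in the third item: there you invoke ``the same minimality argument with $u$ as competitor,'' yet at that stage $u$ is not known to lie in $W^{1,n}$ and its boundary values are not $\psi_j$, so $u$ is not admissible in the variational problem in any sense. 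The correct mechanism --- the one behind \cite[Proposition 2.7]{KM} --- is the comparison property of Definition \ref{Def:superh} itself: with continuous obstacles $\psi_j\uparrow u$, the continuous solution $s_j$ of the obstacle problem is $n$-harmonic on the open set $\{s_j>\psi_j\}$ and equals $\psi_j\le u$ on the relative boundary of that set, so $n$-superharmonicity of $u$ forces $s_j\le u$ there, hence everywhere; this proves the locally bounded case (your third item) first, with the Caccioppoli bound and weak compactness exactly as you describe. The second item is then deduced by truncation: $\min\{u,\lambda\}$ is $n$-superharmonic and locally bounded, hence a weak supersolution, and since $u\in W^{1,n}_{\mathrm{loc}}$ one passes to the limit $\lambda\to\infty$ in the weak inequality. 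As written, your order of deduction (item two proved directly with the lsc obstacle $u$, then invoked at the end of item three) cannot be repaired without switching to this comparison-principle mechanism.
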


Clearly, when functions are $C^2$ or better, these two definitions agree,  we will simply refer them $n$-superharmonic with no confusion.
For $n$-superharmonic functions, one still has integrability of the gradient as shown in \cite[Theorem 5.15]{Lind}.

\begin{lemma}\label{Lem:nabla-q} (\cite[Theorem 5.15]{Lind}) 
Suppose that $u$ is an $n$-superharmonic function in $\Omega$. Let $D\subset \subset\bar D\subset\Omega$ be a bounded
subdomain and $0 < q < n$. Then there is a constant $C>0$ such that
$$
\int_D |\nabla u|^q dx \leq C.
$$
\end{lemma}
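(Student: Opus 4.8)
The plan is to reduce the statement to a Caccioppoli-type estimate for truncations of $u$, together with a weak-type bound coming from the fact that $u$ is a supersolution. First I would observe that it suffices to treat $u \geq 1$ on $D$: since $u$ is lower semi-continuous it is bounded below on the compact closure of a slightly larger subdomain $D'$ with $D \subset\subset D' \subset\subset \Omega$, so after adding a constant we may assume $u \geq 1$ there, and this does not change $\nabla u$. By Lemma \ref{Lem:equiv def}, the truncations $u_k = \min\{u, k\}$ are locally bounded $n$-superharmonic functions, hence lie in $W^{1,n}_{\mathrm{loc}}(\Omega)$ and are weak supersolutions; moreover $u_k \uparrow u$. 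The strategy is to prove a bound on $\int_D |\nabla u_k|^q\,dx$ that is uniform in $k$, and then pass to the limit by monotone convergence (using $\nabla u_k = \nabla u \cdot \chi_{\{u < k\}} \to \nabla u$ a.e., which is where one needs to know $u$ has a well-defined a.e.\ gradient — this follows from $u_k \in W^{1,n}_{\mathrm{loc}}$ and a standard diagonal argument).

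For the uniform bound, the key step is to test the supersolution inequality for $u_k$ against $\eta = \zeta^n (u_k)^{-\beta}$ for a cutoff $\zeta \in C_c^\infty(D')$ with $\zeta \equiv 1$ on $D$ and a suitable exponent $\beta \in (0,1)$ adapted to $q$. Expanding $\nabla \eta$ and using $u_k \geq 1$, the $(u_k)^{-\beta}$ factor makes the "bad" term on the right manageable: one obtains, after Young's inequality to absorb the cross term,
\[
\beta \int_{D} \zeta^n \frac{|\nabla u_k|^n}{(u_k)^{\beta+1}}\,dx \leq C(n,\beta) \int_{D'} |\nabla \zeta|^n\,dx,
\]
with the right-hand side independent of $k$. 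This controls $\int_D |\nabla u_k|^n (u_k)^{-\beta-1}\,dx$ uniformly. Since $\beta+1 < n$, Hölder's inequality with exponents $n/q$ and its conjugate, together with the elementary pointwise identity $|\nabla u_k|^q = \big(|\nabla u_k|^n (u_k)^{-\beta-1}\big)^{q/n} \cdot (u_k)^{(\beta+1)q/n}$, reduces the claim to bounding $\int_{D} (u_k)^{(\beta+1)q/(n-q)}\,dx$ uniformly in $k$; choosing $\beta$ close enough to $0$ relative to $q$, the exponent $(\beta+1)q/(n-q)$ can be taken less than any fixed threshold, and the required uniform $L^s$-bound on $u_k$ (for small $s$) is itself a standard consequence of local boundedness-from-below of $u$ combined with the weak-Harnack/weak-$L^p$ estimate for supersolutions (or, more simply, can be folded into the iteration by a Moser-type scheme).

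The main obstacle I anticipate is the bookkeeping of exponents so that all three constraints are simultaneously met: $\beta+1 < n$ (so the Caccioppoli test function is legitimate and the right side is finite), $q < n$ (given), and the resulting power of $u_k$ after Hölder being low enough to be absorbed by an a priori $L^s$-estimate for small $s$. For $q$ close to $n$ this squeezes $\beta$ very close to $0$, which in turn weakens the Caccioppoli constant $\beta$ on the left — so one must track that $C(n,\beta)/\beta$ stays finite, i.e.\ the estimate degenerates but does not blow up, as $q \uparrow n$. An alternative route that sidesteps part of this, and which I would keep in reserve, is to invoke the pointwise Wolff potential bound $u(x) \geq c\, W_{1,n}^{\mu}(x, R)$ (valid since $-\Delta_n u = \mu \geq 0$) and instead estimate $|\nabla u|$ directly from interior gradient estimates for $n$-harmonic-type equations away from the support of $\mu$ — but the truncation/Caccioppoli argument above is cleaner and self-contained given only what is quoted in the excerpt.
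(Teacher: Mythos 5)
Your overall strategy (truncate, test with $\zeta^n u_k^{-\beta}$, apply H\"older, pass to the limit in $k$) is exactly the standard route: it is Lindqvist's own proof of \cite[Theorem 5.15]{Lind}, which the paper simply cites, and whose key computation the paper reproduces in the proof of Lemma \ref{Lem:convergence lemma} and in Remark \ref{Rem:theorem 5.15}. However, two of your steps are wrong as written. First, the displayed Caccioppoli estimate is false: testing the supersolution inequality with $\zeta^n u_k^{-\beta}$ and absorbing the cross term by Young's inequality gives
\[
\beta\int \zeta^n\,\frac{|\nabla u_k|^n}{u_k^{\,\beta+1}}\,dx \;\le\; C(n,\beta)\int u_k^{\,n-1-\beta}\,|\nabla\zeta|^n\,dx,
\]
which is precisely \eqref{Equ:5.15} with $\alpha=\beta$. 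Since $n-1-\beta>0$ for $\beta\in(0,1)$, the normalization $u_k\ge 1$ bounds the factor $u_k^{\,n-1-\beta}$ from \emph{below}, not above, so it cannot be discarded; your right-hand side $C(n,\beta)\int|\nabla\zeta|^n$ is simply not an upper bound (for $u=\lambda\log\frac1{|x|}+1$ the left-hand side grows like $\lambda^{\,n-1-\beta}$ while your right-hand side is independent of $\lambda$). The uniform-in-$k$ finiteness of the correct right-hand side is not free: it requires $u\in L^{\,n-1-\beta}_{\mathrm{loc}}$, which is exactly where \cite[Theorem 5.11]{Lind} (equivalently the weak Harnack inequality \cite[Theorem 3.51]{HKM}) must be invoked, as Remark \ref{Rem:theorem 5.15} points out.

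Second, the exponent bookkeeping in your H\"older step is off: $(\beta+1)q/(n-q)\ge q/(n-q)\to\infty$ as $q\uparrow n$, no matter how small $\beta$ is, so a "small $s$" integrability bound on $u_k$ does not suffice, and there is in fact no need to send $\beta\downarrow 0$ (hence no degeneration of constants to worry about). What rescues the argument is that $n$-superharmonic functions belong to $L^s_{\mathrm{loc}}$ for \emph{every} finite $s$ (the $p=n$ case of \cite[Theorem 5.11]{Lind}); quoting this, the corrected Caccioppoli estimate and the H\"older step both go through for any fixed $\beta\in(0,n-1]$ and all $q<n$, and the passage $k\to\infty$ by monotone convergence is fine as you describe. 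So your proposal is the right approach but has a genuine gap: the indispensable ingredient is the local $L^s$-integrability of $u$ itself for arbitrarily large finite $s$, which your write-up both underestimates (claiming small $s$ suffices) and implicitly assumes when you drop the factor $u_k^{\,n-1-\beta}$.
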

Therefore, if $u$ is $n$-superharmonic or weakly $n$-superharmonic function, then 
$\mu = - \Delta_n u $ may be considered to be a nonnegative Radon measure on $\Omega$ (cf. \cite{Lind} and \cite[Theorem 2.1]{KM-92}). 
And, by a simple approximation argument,  
\begin{equation}\label{Equ:integral-radon}
\int |\nabla u|^{n-2}\nabla u\cdot\nabla \phi = \int \phi d\mu
\end{equation}
for any testing function $\phi \in W^{1, n}_0(D)$, if $u\in W^{1, n}(D)$ and $D\subset \Omega$. 
It is also helpful to mention the following weak comparison principle from Theorem 2.15 and the remark right after the proof 
in \cite{Lind}.

\begin{theorem}(\cite[Theorem 2.15]{Lind}) \label{Lem:WeakCompPrinc}
Suppose that $u$ is a weakly $n$-superharmonic function and $v$ is an $n$-harmonic function in a bounded domain 
$\Omega \subset \mathbb{R}^n$. If for every $\zeta \in \partial \Omega$
\begin{equation}
\limsup_{x\rightarrow\zeta} v(x)\leq \liminf_{x\rightarrow\zeta}u(x)
\end{equation}
with the possibilities $\infty \leq \infty$ and  $-\infty \leq -\infty$ excluded, then $u \geq v$ almost everywhere in $\Omega$.
\end{theorem}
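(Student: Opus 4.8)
The plan is to run the standard monotonicity (Caccioppoli-type) argument for the $n$-Laplacian: test the weak formulations of $u$ and of $v$ against the positive part of $v-u$, which the boundary hypothesis forces to have compact support in $\Omega$. I work with the lower semicontinuous $n$-superharmonic representative of $u$ furnished by Lemma~\ref{Lem:equiv def} (so that $u,v\in W^{1,n}_{\mathrm{loc}}(\Omega)$ and pointwise values near $\partial\Omega$ are unambiguous), and I use freely that, by the approximation remark around \eqref{Equ:integral-radon}, the defining inequality for a weak supersolution and the $n$-harmonic equation for $v$ both persist when tested against any nonnegative $\phi\in W^{1,n}_0(\Omega)$ with $\operatorname{supp}\phi$ compactly contained in $\Omega$; this is legitimate since $|\nabla u|^{n-2}\nabla u$ and $|\nabla v|^{n-2}\nabla v$ belong to $L^{n/(n-1)}_{\mathrm{loc}}(\Omega)$.

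Fix $\varepsilon>0$, put $D_\varepsilon=\{x\in\Omega:\ v(x)>u(x)+\varepsilon\}$, and let $\eta=(v-u-\varepsilon)_+$. The first step is to show that $\overline{D_\varepsilon}$ is a compact subset of $\Omega$. For every $\zeta\in\partial\Omega$ the hypothesis gives
\[
\limsup_{x\to\zeta}\bigl(v(x)-u(x)\bigr)\ \le\ \limsup_{x\to\zeta}v(x)-\liminf_{x\to\zeta}u(x)\ \le\ 0 ,
\]
so $\zeta$ has an open neighborhood $U_\zeta$ on which $v-u<\varepsilon$, i.e. $U_\zeta\cap D_\varepsilon=\emptyset$. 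Covering the compact set $\partial\Omega$ by finitely many such $U_\zeta$ yields an open $U\supset\partial\Omega$ with $U\cap D_\varepsilon=\emptyset$, whence $\overline{D_\varepsilon}\subset\overline\Omega\setminus U\subset\Omega$ is compact. Hence $\eta\in W^{1,n}_0(\Omega)$, $\eta\ge 0$, and $\nabla\eta=(\nabla v-\nabla u)\chi_{D_\varepsilon}$ a.e.

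Next I would use $\eta$ as a test function. Since $v$ is $n$-harmonic, $\int_\Omega|\nabla v|^{n-2}\nabla v\cdot\nabla\eta\,dx=0$; since $u$ is a weak supersolution and $\eta\ge0$, $\int_\Omega|\nabla u|^{n-2}\nabla u\cdot\nabla\eta\,dx\ge0$. Subtracting and using that $\nabla\eta$ is supported in $D_\varepsilon$,
\[
\int_{D_\varepsilon}\bigl(|\nabla v|^{n-2}\nabla v-|\nabla u|^{n-2}\nabla u\bigr)\cdot(\nabla v-\nabla u)\,dx\ \le\ 0 .
\]
The integrand is pointwise nonnegative by the strict monotonicity of $\xi\mapsto|\xi|^{n-2}\xi$ on $\mathbb{R}^n$ (for $n\ge 2$, $(|a|^{n-2}a-|b|^{n-2}b)\cdot(a-b)\ge 0$, with equality only when $a=b$), so it vanishes a.e.\ on $D_\varepsilon$; thus $\nabla v=\nabla u$ a.e.\ on $D_\varepsilon$ and $\nabla\eta\equiv 0$ a.e.\ on $\Omega$. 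By the Poincar\'e inequality on the bounded domain $\Omega$, $\|\eta\|_{L^n(\Omega)}\le C\|\nabla\eta\|_{L^n(\Omega)}=0$, so $\eta\equiv 0$, i.e.\ $v\le u+\varepsilon$ a.e.\ in $\Omega$. Letting $\varepsilon\downarrow 0$ gives $u\ge v$ a.e., as claimed.

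The only genuinely delicate point is the compact-support reduction in the first step: one must check that the boundary inequality, phrased with $\limsup/\liminf$ and with the cases ``$\infty\le\infty$'' and ``$-\infty\le-\infty$'' excluded, really does keep $D_\varepsilon$ bounded away from $\partial\Omega$. This is exactly where the excluded cases are needed --- otherwise a boundary point at which both $u$ and $v$ blow up could carry part of the support of $\eta$, wrecking its admissibility as a test function. Everything else is routine: the monotonicity vector inequality and the Poincar\'e fact can be cited or checked in a line.
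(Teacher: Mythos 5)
Your proof is correct, and it is essentially the standard argument for this result: the paper itself offers no proof (it simply cites \cite[Theorem 2.15]{Lind}), and your scheme --- reduce to the compactly supported test function $(v-u-\varepsilon)_+$ via the boundary hypothesis (with the excluded cases exactly guaranteeing no $\infty-\infty$ indeterminacy), then use the monotonicity of $\xi\mapsto|\xi|^{n-2}\xi$ and Poincar\'e --- is the same one used in the cited source. The delicate points (lsc representative of $u$, extension of the weak formulations to $W^{1,n}_0$ test functions with compact support, compactness of $\overline{D_\varepsilon}$) are all handled adequately.
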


For more basic properties of $n$-superharmonic functions, we refer readers  to \cite{HeKi, KM-88, HKM, KM, Lind}.


\subsection{The story in 2 dimensions}\label{Subsect:2 dimensions revisit}

Thanks to the seminal paper \cite{Hu} of Huber in 1957 (see also \cite{Cv, Ff, BF, Hf}),  to explore the connection between
geometric properties of surfaces and potential theory based on Gauss curvature equations has been the major part of the theory 
of surfaces. The Gauss curvature equation in an isothermal coordinates on a surface is
\begin{equation}\label{Equ:gauss}
-\Delta u = Ke^{2u},
\end{equation}
where $K$ is the Gauss curvature of the surface metric $e^{2u}|dx|^2$. Let us focus on one thread of developments on this subject: 
local behavior of superharmonic functions near an isolated singular point or equivalently asymptotic behavior at infinity of 
superharmonic functions on the entire plane. 
\\

A function that is subharmonic on the entire plane is representable as a function of potential type
$$
v (z) =  \int_{\mathbb{C}} \log |1 - \frac z\xi|d\mu(\xi)
$$
for $z, \xi \in \mathbb{C}$ the complex plane, where $\mu$ is a positive mass distribution and vanishes in a neighborhood of the origin for our
purposes. To describe the asymptotic behavior of the function $v$ at infinity one aims to understand the limit
$$
\lim_{z\to \infty} \frac {v(z)}{\log |z|}.
$$

\vskip 0.1in
In this regard, notions of thinness play the natural and important role. Notions of thinness at 
a point was considered by Brelot in \cite{Br} in 1940, where a subset $E$ in $\mathbb{C}$ is said to be thin at a point $z_0$ if
either $z_0\notin \bar E$ or there exists a subharmonic function $v$ in a neighborhood of $z_0$ such that
\begin{equation}\label{Equ:Br-thin}
\limsup_{z\in E \text{ and } z\to z_0} v(z) < v(z_0),
\end{equation} 
which we will refer it as thinness by Cartan property (cf. \cite{AH}). 
This notion of thinness at a point is for potential functions with no point charge at the point.
\\

In \cite[(1.8)]{AH}, a subset $E$ of $\mathbb{C}$ is said to be thin at infinity if either it is bounded or there exists a function 
that is subharmonic on the entire complex plane $\mathbb{C}$  such that
\begin{equation}\label{Equ:AH-thin}
\limsup_{z \in E \text{ and } z \to \infty} \frac {v(z)}{\log |z|} < \limsup_{z\to\infty} \frac  {v(z)}{\log |z|}.
\end{equation}
At the end of \cite{AH}, there was a discussion about the correlation of these two notions 
of thinness. For a function $v$ of potential type, one may take an inversion and consider the subharmonic function 
$$
u (z) = v(\frac 1z) + M\log \frac 1{|z|}
$$
on the punctured plane with no charge at the origin, where $M$ is the total mass of the potential function $v$. 
Then \eqref{Equ:AH-thin} is equivalent to
\begin{equation}\label{Equ:Br-1-thin}
\limsup_{z\in \tilde E \text{ and } z\to 0}\frac {u(z)}{\log \frac 1{|z|}} < \limsup_{z\to 0}\frac {u(z)}{\log \frac 1{|z|}} = 0,
\end{equation}
where $\tilde E = \{\frac 1z: z\in E\}$. This is to say that the thinness defined in \cite[(1.8)]{AH} is the one for potential functions 
with point charge. It was then pointed out in \cite{AH} that $E$ is thin at infinity by \eqref{Equ:AH-thin} if and only if $\tilde E$ is thin at 
the origin by \eqref{Equ:Br-thin} thanks to \cite[Theorem 2]{Br-1}. These two types of thinness can be shown to be no longer equivalent 
for a nonlinear potential theory, which will be included in our future work. 
\\

In the geometric viewpoint, more interestingly, an equivalent criterion for a set to be thin 
at infinity using log-capacity was established as a Wiener type criterion in \cite{AH}.

\begin{theorem}\label{Thm:AH-thin} (\cite[Theorem 1.3]{AH}) Let $E$ be a Borel subset set in the plane and $\gamma_n$ be the logarithmic 
capacity of the part of $E$ lying in the annulus $\{z\in \mathbb{C}: r^n < |z| \leq r^{n+1}\}$ for a fixed number $r>1$ and 
$n=1, 2, 3, \cdots$. Then $E$ is thin at infinity if and only if $\gamma_n\to 0$ as $n\to\infty$ and 
\begin{equation}\label{Equ:Wiener}
\sum_{n=1}^\infty \frac n{\log \frac 1{\gamma_n}} < \infty.
\end{equation}
\end{theorem}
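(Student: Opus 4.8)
\noindent\emph{Proof strategy.} The plan is to deduce Theorem~\ref{Thm:AH-thin} from the classical Wiener criterion for thinness at a \emph{finite} point, via the inversion $\iota(z)=1/z$ and a short comparison of the two capacity series. Put $\tilde E=\iota(E)$. By the discussion in Section~\ref{Subsect:2 dimensions revisit} --- the substitution $u(z)=v(1/z)+M\log\frac1{|z|}$, with $M$ the total mass of the mass distribution of a witness $v$, together with \cite[Theorem~2]{Br-1} --- the set $E$ is thin at infinity in the sense of \eqref{Equ:AH-thin} if and only if $\tilde E$ is thin at the origin in the sense of \eqref{Equ:Br-thin}. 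Moreover $\iota$ carries the annulus $\{r^n<|z|\le r^{n+1}\}$ onto $\{r^{-n-1}\le|w|<r^{-n}\}$, and since $|\iota'(z)|=|z|^{-2}\in[r^{-2n-2},r^{-2n}]$ on that annulus, for any compact $K$ there the Robin constant of $\iota(K)$ exceeds that of $K$ by $2n\log r+O(1)$ (uniformly in $n$, the error lying in $[0,2\log r]$). Hence the logarithmic capacity $\tilde\gamma_n$ of $\tilde E\cap\{r^{-n-1}\le|w|<r^{-n}\}$ satisfies $\log\frac1{\tilde\gamma_n}=\log\frac1{\gamma_n}+2n\log r+O(1)$.

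Next I would invoke the two--dimensional Wiener criterion for thinness at a point (Brelot; see, e.g., \cite{HK} and the references therein): $\tilde E$ is thin at the origin if and only if the logarithmic capacity $\tilde\gamma_n$ of its intersection with the $n$-th $r$-adic annulus about the origin tends to $0$ and $\sum_{n}n/\log(1/\tilde\gamma_n)<\infty$. Should a self-contained treatment be desired, sufficiency is obtained by assembling an Evans--type potential $u=\sum_n c_n\int\log\frac1{|1-w/\xi|}\,d\mu_n(\xi)$ out of the equilibrium measures $\mu_n$ of the annular pieces, the masses and coefficients $c_n$ being tuned so that $u(0)<\infty$ while $u\to+\infty$ along $\tilde E$; necessity is obtained by testing an arbitrary subharmonic witness against the equilibrium potential of each annular piece and using the maximum principle together with the quantitative bound for harmonic measure in terms of logarithmic capacity.

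It then remains to reconcile the two capacity series. Since $\gamma_n\le\mathrm{cap}(\{|z|\le r^{n+1}\})=r^{n+1}$ we have $\log\frac1{\tilde\gamma_n}\ge(n-1)\log r\to\infty$, so $\tilde\gamma_n\to0$ holds automatically and the genuine content is the equivalence of $\sum_n n/\log(1/\tilde\gamma_n)<\infty$ with the conditions ``$\gamma_n\to0$ and \eqref{Equ:Wiener}'' of the theorem. If the latter hold, the terms of \eqref{Equ:Wiener} tend to $0$, forcing $\log(1/\gamma_n)/n\to\infty$ (hence $\gamma_n\to0$); then $\log\frac1{\tilde\gamma_n}=\log\frac1{\gamma_n}+2n\log r\sim\log\frac1{\gamma_n}$, so $\sum_n n/\log(1/\tilde\gamma_n)<\infty$. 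Conversely, if $\tilde E$ is thin at the origin, then $\sum_n n/\log(1/\tilde\gamma_n)<\infty$; were $\log(1/\gamma_n)\le Cn$ along a subsequence, the matching terms $n/(2n\log r+\log(1/\gamma_n)+O(1))$ would stay bounded below, contradicting convergence, so again $\log(1/\gamma_n)/n\to\infty$, $\gamma_n\to0$, and $\sum_n n/\log(1/\gamma_n)<\infty$. This closes the equivalence, and hence the theorem.

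The hard part is the Wiener criterion at a point itself: in its necessity direction one must turn a scale-by-scale lower bound on logarithmic capacity into an honest lower bound on an \emph{arbitrary} subharmonic witness without losing additivity over the scales, and this is where the fine potential theory --- balayage, equilibrium potentials, the two-sided comparison of logarithmic capacity with harmonic measure on annuli, capacitability of Borel sets --- genuinely enters; the inversion step and the series manipulation above are routine bookkeeping. An alternative route, bypassing the criterion at a point, is the one of Arsove and Huber, who argue directly with subharmonic functions of logarithmic growth on the whole plane --- building the witness for \eqref{Equ:AH-thin} from the equilibrium measures of the outer annuli $\{r^n<|z|\le r^{n+1}\}$, and, for the converse, bounding such a witness from below on each outer annulus via Beurling/Gehring-type capacity estimates --- so that the bookkeeping is organized directly around the series in \eqref{Equ:Wiener}.
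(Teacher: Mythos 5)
This statement is quoted by the paper from \cite[Theorem 1.3]{AH} and is not proved there, so the only comparison available is with Arsove--Huber's original argument, which works directly at infinity with subharmonic functions of potential type and the equilibrium measures of the annular pieces $\{r^n<|z|\le r^{n+1}\}$ --- essentially the ``alternative route'' you mention in your last sentence. Your route is genuinely different: you invert, invoke the classical planar Wiener criterion for Brelot thinness at a finite point, and transfer the capacity series back. The two pieces of bookkeeping that are actually yours are correct: since $|1/z_1-1/z_2|=|z_1-z_2|/(|z_1||z_2|)$ with $|z_1||z_2|\in(r^{2n},r^{2n+2}]$ on the annulus, pushing equilibrium measures forward and back does give $\log\frac1{\tilde\gamma_n}=\log\frac1{\gamma_n}+2n\log r+O(1)$ with error in $[0,2\log r]$; and the series comparison works in both directions, the key observation (which you make) being that convergence of either series forces $\log(1/\gamma_n)/n\to\infty$, which simultaneously yields $\gamma_n\to0$ and renders the shift $2n\log r$ harmless, while $\tilde\gamma_n\to0$ is automatic. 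What your reduction buys is brevity; what it costs is that the two hard ingredients are outsourced rather than proved: the point Wiener criterion for logarithmic capacity (classical, e.g.\ \cite{HK}), and the equivalence ``$E$ thin at infinity in the sense of \eqref{Equ:AH-thin} $\Longleftrightarrow$ $\tilde E$ thin at the origin in the sense of \eqref{Equ:Br-thin}'', which rests on \cite[Theorem 2]{Br-1} and is precisely the step the paper warns is no longer available in the nonlinear setting. This is not circular --- Brelot's theorem and the point criterion are independent of and prior to \cite[Theorem 1.3]{AH} --- but it does make your proof a reduction rather than a self-contained argument, and it is worth noting that it is the direct, annulus-by-annulus argument of Arsove--Huber (not the inversion reduction) that motivates the $n$-capacity notion of Definition \ref{Def:n-thin} and is the one that survives in the higher-dimensional, nonlinear theory developed in this paper.
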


In summary, based on works in \cite{Br, Br-1, AH} (see also \cite{Hen, Hu-1, Ar, Hay, HK}), one knows that, for a function $v$ of 
potential type, there is a set $E$ that is thin at infinity and
\begin{equation}\label{Equ:asymptotic}
\lim_{z \notin E \text{ and } z \to \infty} \frac {v(z)}{\log |z|} = \limsup_{z\to\infty} \frac  {v(z)}{\log |z|}.
\end{equation}
Naturally, one asks what is the condition for a function of potential type to have a clean asymptotic behavior \eqref{Equ:asymptotic} 
with no exception thin set $E$?
It is until very recent this question was solved analytically in \cite[Theorem 2.1]{Tal-2} in 2006 and geometrically in 
\cite[Lemma 4.2]{BMQ-s} in 2016.  Namely,

\begin{theorem*} (\cite[Lemma 4.2]{BMQ-s}) Suppose that $(\mathbb{C}, e^{2u}|dz|^2)$ is complete with nonnegative and bounded Gauss 
curvature. Then
\begin{equation}\label{Equ:asymptotic-2d}
u (z) = m\log \frac 1{|z|} + o(\log |z|) \text{ as $|z|\to\infty$}
\end{equation}
for $m\in [0, 1]$.
\end{theorem*}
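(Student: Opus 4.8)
The plan is as follows. In dimension two the hypotheses say exactly that $u\in C^{\infty}(\mathbb{C})$, that $g=e^{2u}|dz|^{2}$ is complete at infinity, and that $-\Delta u=Ke^{2u}$ with $0\le K\le C$; since $\mathrm{Ric}_{g}=Kg$ on a surface, this is precisely the two-dimensional case of Theorem~\ref{Thm:main theorem-3-intro}, and the asserted expansion $u=m\log\frac1{|z|}+o(\log|z|)$ is the content of its last bullet. So my plan is to run, in this elementary setting, the mechanism behind Theorem~\ref{Thm:main theorem-3-intro}: invert about the origin to trade the behavior at infinity for an isolated singularity at $0$, apply Theorem~\ref{Thm:main theorem-1-intro} to obtain a clean limit off a thin set together with the logarithmic lower bound, then apply Theorem~\ref{Thm:main theorem-2-intro} to delete the thin set, and finally pin down the range of $m$.

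\emph{Inversion and normalization.} Set $\iota(y)=y/|y|^{2}$ and $\tilde u(y)=u(\iota(y))-2\log|y|$, so $\iota^{*}g=e^{2\tilde u}|dy|^{2}$; this is isometric to $g$ over the exterior of a disk, hence complete at the puncture $y=0$, and $\tilde u$ solves $-\Delta\tilde u=\tilde Ke^{2\tilde u}$ with $\tilde K=K\circ\iota$ still in $[0,C]$. Using $|y|=1/|z|$ one checks that $\hat m:=\lim_{y\to0}\frac{\tilde u(y)}{\log\frac1{|y|}}$ and $m$ satisfy $\hat m=2-m$ whenever one of the two limits exists, and rescaling the small punctured disk onto $B(0,2)\setminus\{0\}$ alters the equation only by a constant factor. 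To feed $\tilde u$ into Theorems~\ref{Thm:main theorem-1-intro}--\ref{Thm:main theorem-2-intro} I must make it nonnegative, i.e.\ bounded below near $0$; completeness forces this, since any path from $\partial B(0,1)$ to $\{|y|=\rho\}$ has $\tilde u$-length at least $\int_{\rho}^{1}e^{\mu(s)}\,ds$ with $\mu(s)=\min_{|y|=s}\tilde u$, so $\int_{0}e^{\mu(s)}\,ds=\infty$, which makes $\mu(s)\to+\infty$ as $s\to0$ and hence $\tilde u$ bounded below on a small punctured disk. Put $w=\tilde u+c$ with $c$ chosen so that $w\ge0$ on $B(0,2)$; then $w$ extends to a nonnegative superharmonic function on $B(0,2)$ (the origin being polar) with $-\Delta_{2}w=\mu$ for a nonnegative Radon measure $\mu$, and on $B(0,2)\setminus\{0\}$ one has $-\Delta_{2}w=(\tilde Ke^{-2c})e^{2w}$.

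\emph{Applying the two theorems.} By Theorem~\ref{Thm:main theorem-1-intro} there is a $2$-thin set $E$ with $\lim_{y\notin E,\ y\to0}\frac{w(y)}{\log\frac1{|y|}}=\liminf_{y\to0}\frac{w(y)}{\log\frac1{|y|}}=\hat m\ge0$ and $w(y)\ge\hat m\log\frac1{|y|}-C'$; since $e^{2w}|dy|^{2}$ differs from the isometric copy of $g$ only by a constant factor it is complete at $0$, so the last clause of that theorem gives $\hat m\ge1$, hence $w(y)\to+\infty$ as $y\to0$. Now apply Theorem~\ref{Thm:main theorem-2-intro} to $w$: the right-hand side $(\tilde Ke^{-2c})e^{2w}$ trivially obeys $0\le f\le C''|\nabla w|^{0}e^{2w}$, the critical growth condition with $p=0,\ \alpha=2$ — this is the one place where boundedness of $K$ enters — and $w\to+\infty$ at $0$ by the previous step, so the limit $\lim_{y\to0}\frac{w(y)}{\log\frac1{|y|}}$ exists with no exceptional set; being equal to the $\liminf$, it equals $\hat m\ge1$. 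Subtracting $c$ and undoing the inversion gives $\lim_{z\to\infty}\frac{u(z)}{\log\frac1{|z|}}=2-\hat m=:m\le1$, that is $u(z)=m\log\frac1{|z|}+o(\log|z|)$.

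\emph{Range of $m$, and the main obstacle.} It remains to see $m\ge0$. If $m<0$ then $\frac{u(z)}{\log\frac1{|z|}}\to m$ together with $\log\frac1{|z|}\to-\infty$ forces $u(z)\to+\infty$ uniformly as $|z|\to\infty$; being continuous on $\mathbb{C}$, $u$ is then bounded below, so $u+\mathrm{const}$ is a nonnegative superharmonic function on the parabolic surface $\mathbb{C}$, hence constant — contradicting $u\to+\infty$. (Alternatively $m\ge0$ is immediate from the identification $m=\frac1{2\pi}\int_{\mathbb{C}}Ke^{2u}\,dx$ supplied by Theorem~\ref{Thm:main theorem-3-intro}, since $K\ge0$.) Granting Theorems~\ref{Thm:main theorem-1-intro}--\ref{Thm:main theorem-2-intro}, the part that needs genuine care is the reduction: verifying that the normalized function is legitimately nonnegative and that completeness and the critical growth bound really survive the inversion and rescaling, together with the bookkeeping of $m$ through the inversion. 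If one insisted on a self-contained proof, the substantive difficulty would instead be exactly what Theorem~\ref{Thm:main theorem-2-intro} supplies — deleting the thin set — which in two dimensions is Taliaferro's theorem and rests on the Brezis--Merle / Adams--Moser--Trudinger inequality controlling possible concentration of $Ke^{2u}$.
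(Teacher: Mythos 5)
Your route is sound in outline and is genuinely different from the proof this paper points to: the statement is quoted from \cite[Lemma 4.2]{BMQ-s}, and the paper describes that proof as resting on Croke--Karcher non-collapsing \cite[Theorem A]{CK} together with Taliaferro's two-dimensional asymptotics \cite[Theorem 2.1]{Tal-2} (hence Brezis--Merle). You instead specialize the paper's own machinery to $n=2$: invert at infinity, apply Theorem \ref{Thm:main theorem-1-intro} to get the limit along the complement of a thin set, the logarithmic lower bound, and $\hat m\ge 1$ from completeness; then apply Theorem \ref{Thm:main theorem-2-intro} (your $f\le C''e^{2w}$ is the critical growth condition with $p=0$, $\alpha=2$, which is exactly where boundedness of $K$ enters) to delete the thin set; and finally conclude $m=2-\hat m\le 1$ with $m\ge 0$ from parabolicity of $\mathbb{C}$. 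This is essentially the $n=2$ shadow of the paper's proof of Theorem \ref{Thm:main theorem-g}, and it buys independence from Croke--Karcher; bounded curvature plays the same role in both arguments. The inversion bookkeeping $\hat m=2-m$, the rescaling remarks, and the $m\ge 0$ argument are all correct.

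One step is under-justified as written: the claim that completeness alone forces $\tilde u$ to be bounded below near the puncture. From completeness you correctly extract $\int_0^1 e^{\mu(s)}\,ds=\infty$ with $\mu(s)=\min_{|y|=s}\tilde u$, but for a general continuous function this does not imply $\mu(s)\to+\infty$, nor even boundedness below: $\mu$ can dip arbitrarily far down along a sparse set of radii without affecting the divergence of the integral. What rescues the step is superharmonicity, which you do not invoke there: since $-\Delta\tilde u=\tilde K e^{2\tilde u}\ge 0$, comparison with the harmonic functions $a+b\log|y|$ on annuli shows that $\mu$ is concave as a function of $\log\frac1s$, hence monotone near $0$ and possessing a limit as $s\to 0$, and the divergence of $\int_0 e^{\mu(s)}\,ds$ then forces that limit to be $+\infty$. (The paper handles the analogous point in the proof of Theorem \ref{Thm:main theorem-g} by citing \cite[Proposition 8.1]{CHY}, which combines completeness with the sign of the curvature in just this way.) With that one-line repair, together with the classical removability of the point for superharmonic functions bounded below (so that $w=\tilde u+c$ is a legitimate input to Theorem \ref{Thm:main theorem-1-intro}), your argument goes through.
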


\noindent
It is known that $m = \frac 1{2\pi}\int_{\mathbb{C}} Ke^{2u}dz$ and $m\in [0, 1]$ due to \cite{Cv, Hu}, where $m=0$ implies
$u$ is a constant. The proof of the above result in \cite{BMQ-s} relies on two important ingredients that are deep in geometric analysis 
and partial differential equation. One is the non-collapsing result of Croke-Karcher \cite[Theorem A]{CK} in 1988 for complete surfaces 
with nonnegative Gauss curvature; the other is asymptotic estimates for nonnegative solutions to Gauss curvature type equations of 
Taliaferro in \cite[Theorem 2.1]{Tal-2} (see also his previous work \cite{Tal, Tal-1}). One of the key analytic ingredients in 
\cite{Tal, Tal-1, Tal-2} is the Brezis-Merle inequality of Moser-Trudinger type \begin{equation}\label{Equ:Brezis-Merle} 
\int_{\Omega} e^{\frac {(4\pi - \delta)|u (x)|}{\|\Delta u\|_{L^1(\Omega)}} }dx \leq (\text{diam}(\Omega))^2\frac {4\pi^2}{\delta}
\end{equation}
for $u |_{\partial\Omega} = 0$ and $\delta\in (0, 4\pi)$, established in \cite[Theorem 1]{BM} (cf. \cite{Io}).
\\

Taliaferro's estimates in \cite{Tal, Tal-1, Tal-2} are the major work in the theory of local behavior 
of a class of subharmonic functions near an isolated singular point. And, in the spirit of Huber that was reflected in \cite{Hu}, 
on geometric side, it was a very successful story that the above theorem of sharp 
local behavior (cf. \cite[Lemma 4.2]{BMQ-s}) turns out to be essential to the proof of \cite[Main Theorem]{BMQ-s} in 2 dimensions 
that a complete, non-negatively curved, immersed surface in hyperbolic 3-space is necessarily properly embedded, except 
coverings of equidistant surfaces, which was conjectured by Epstein and Alexander-Currier in \cite{AlCu, AlCu2, Eps, Eps1, Eps2} 
around 1990. 


\subsection{Isolated singularity for nonnegative $n$-superharmonic functions}\label{Subs:verons}

There have been significant developments of the study on local and global behaviors for solutions to (degenerate) quasilinear elliptic 
equations that include the study of $n$-Laplace equations, for example, \cite{KV, B-V, Ve} and references therein. The following result
on the isolated singularities of nonnegative $n$-superharmonic functions are usually of importance.

\begin{theorem} \label{Thm:B-V}
(\cite[Proposition 1.1]{B-V}) Let $0<r<R$. Suppose that $w$ is a nonnegative $n$-superharmonic function on the punctured ball
$B(0, R)\setminus\{0\}$. Assume that $w$ is continuous and $|\nabla w|^n, -\Delta_n w$ is locally integrable in $B(0, R)\setminus\{0\}$.  
 Then, if  $\lim_{x\to 0} w(x) = \infty$, then there are a function $g\in L^1(B(0, r))$
and a number $\beta \geq 0$ such that in 
\begin{equation}\label{Equ:B-Veron}
-\Delta_n w = g + \beta \delta_0, x\in B(0,r)
\end{equation}
in the distributional sense, where $\delta_0$ is the Dirac function at the origin. And $|\nabla u|\in L^p(B(0,r))$ for $p\in (0,n)$.
\end{theorem}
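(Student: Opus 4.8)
The plan is to separate the singular mass at the origin from the regular part by a capacity/removability argument, exactly as in the linear theory but using the nonlinear potential-theoretic machinery quoted above. First I would work on a slightly smaller ball $B(0,r)\subset\subset B(0,R)$. On the punctured ball, $w$ is $n$-superharmonic and continuous, so by Lemma \ref{Lem:nabla-q} we have $|\nabla w|\in L^q_{\mathrm{loc}}(B(0,R)\setminus\{0\})$ for all $q<n$; combined with the hypothesis that $|\nabla w|^n$ and $-\Delta_n w$ are locally integrable away from the origin, the distribution $-\Delta_n w$ on $B(0,R)\setminus\{0\}$ is a nonnegative Radon measure there — call it $\mu_0$ — and $g:=\mathrm{d}\mu_0/\mathrm{d}x$ makes sense as a nonnegative $L^1_{\mathrm{loc}}$ function on the punctured ball (see \eqref{Equ:integral-radon} and \cite[Theorem 2.1]{KM-92}). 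The real content is to show (i) $g\in L^1(B(0,r))$, i.e.\ the mass of $\mu_0$ does not blow up as one approaches $0$, and (ii) that the full distributional Laplacian on $B(0,r)$ differs from $g\,dx$ only by a nonnegative multiple $\beta\delta_0$ of the Dirac mass.

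For step (i) I would test the equation against a radial cutoff. Fix $\eta\in C_c^\infty(B(0,r))$ with $\eta\equiv 1$ near $0$, and let $\zeta_\varepsilon$ be a Lipschitz cutoff that vanishes on $B(0,\varepsilon)$ and equals $1$ outside $B(0,2\varepsilon)$, with $|\nabla\zeta_\varepsilon|\le C/\varepsilon$. Plugging $\phi=\eta\zeta_\varepsilon$ into \eqref{Equ:integral-radon} (valid on the punctured ball where $w\in W^{1,n}_{\mathrm{loc}}$ is a weak supersolution by Lemma \ref{Lem:equiv def}) gives
\[
\int \eta\,\zeta_\varepsilon\, d\mu_0
= -\int |\nabla w|^{n-2}\nabla w\cdot\nabla(\eta\zeta_\varepsilon)\,dx
= -\int \zeta_\varepsilon |\nabla w|^{n-2}\nabla w\cdot\nabla\eta\,dx
-\int \eta\,|\nabla w|^{n-2}\nabla w\cdot\nabla\zeta_\varepsilon\,dx.
\]
The first term on the right is bounded uniformly in $\varepsilon$ by Lemma \ref{Lem:nabla-q} (it involves $|\nabla w|^{n-1}\in L^{q}$ for some $q>1$ against the bounded $\nabla\eta$ supported away from $0$). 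For the second term I would use Hölder with the exponent $n/(n-1)<n$ supplied by Lemma \ref{Lem:nabla-q}:
\[
\Bigl|\int \eta\,|\nabla w|^{n-2}\nabla w\cdot\nabla\zeta_\varepsilon\,dx\Bigr|
\le \frac{C}{\varepsilon}\Bigl(\int_{B(0,2\varepsilon)}|\nabla w|^{\frac{n}{n-1}(n-1)}dx\Bigr)^{\!\frac{n-1}{n}}|B(0,2\varepsilon)|^{1/n}
= C\,\varepsilon^{-1+1}\Bigl(\int_{B(0,2\varepsilon)}|\nabla w|^{\frac n{n-1}\cdot(n-1)}\Bigr)^{\!\frac{n-1}{n}}\!\!,
\]
and since $|\nabla w|\in L^{s}(B(0,r))$ for every $s<n$, the integral over $B(0,2\varepsilon)$ tends to $0$; choosing $s$ slightly larger than $n/(n-1)\cdot(n-1)=n-\text{small}$ one checks the prefactor stays bounded. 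Hence $\limsup_{\varepsilon\to0}\int\eta\zeta_\varepsilon\,d\mu_0<\infty$, and by monotone convergence $\mu_0(B(0,r)\cap\{\eta\equiv1\})<\infty$, giving $g\in L^1(B(0,r))$ after a standard exhaustion.

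For step (ii), consider the distribution $T:=-\Delta_n w - g\,dx$ on the full ball $B(0,r)$; a priori $T$ need not even be a measure, but its restriction to the punctured ball is $0$. By the capacity estimates for the $n$-Laplacian — concretely \cite[Theorem 1.6 and Lemma 3.9]{KM} relating the measure of small balls to the Wolff potential and to $n$-capacity, cf.\ \eqref{Equ:potential-estimate} — a single point has positive $n$-capacity-relevant contribution only through a point mass, so $T=\beta\delta_0$ for some $\beta\in\mathbb{R}$; nonnegativity $\beta\ge0$ follows because $-\Delta_n w\ge0$ as a distribution while $g\ge0$, or alternatively from testing as above with $\phi=\eta(1-\zeta_\varepsilon)$ and letting $\varepsilon\to0$. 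The claim $|\nabla w|\in L^p(B(0,r))$ for $p\in(0,n)$ is then immediate from Lemma \ref{Lem:nabla-q} applied to the $n$-superharmonic function $w$ on a neighborhood of $0$ (it is $n$-superharmonic across $0$ because it is lower semicontinuous, $+\infty$ at $0$, and a supersolution on the complement, so Definition \ref{Def:superh} applies directly).

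The main obstacle I anticipate is step (i): controlling the cutoff error $\int\eta\,|\nabla w|^{n-2}\nabla w\cdot\nabla\zeta_\varepsilon$ uniformly as $\varepsilon\to0$ requires exactly the subcritical gradient integrability $|\nabla w|\in L^{n-0}$ rather than $L^n$, so one cannot simply use the natural energy space; the delicate point is matching the $\varepsilon^{-1}$ from $|\nabla\zeta_\varepsilon|$ against the small-ball gradient integral, and this is where Lemma \ref{Lem:nabla-q} does the essential work. A secondary subtlety is that $w$ is not known to lie in $W^{1,n}$ near $0$ (only in $W^{1,p}$ for $p<n$), so \eqref{Equ:integral-radon} must be applied on the punctured ball with test functions vanishing near $0$, and the passage to $B(0,r)$ performed by the limiting argument just sketched rather than by a direct integration by parts.
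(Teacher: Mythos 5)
The paper does not prove this statement at all --- it is quoted verbatim from \cite[Proposition 1.1]{B-V}, with Remark \ref{Rem:vis-radon} indicating the intended mechanism --- so your proposal has to stand on its own, and it has a genuine gap exactly at the point you yourself flag as the main obstacle. The uniform-in-$\varepsilon$ control of the cutoff error $\int \eta\,|\nabla w|^{n-2}\nabla w\cdot\nabla\zeta_\varepsilon\,dx$ does not follow from your computation: the H\"older pairing you display uses exponents $\tfrac n{n-1}$ and $n$, i.e.\ it bounds the term by $\bigl(\int_{B(0,2\varepsilon)}|\nabla w|^{n}\bigr)^{(n-1)/n}$, which is precisely the $L^n$ gradient integrability near the origin that is \emph{not} available. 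If instead you use $|\nabla w|\in L^{s}$ with $s<n$, the prefactor becomes $\varepsilon^{-1}|B(0,2\varepsilon)|^{1-(n-1)/s}=C\,\varepsilon^{\,n-1-n(n-1)/s}$, whose exponent is strictly negative for every fixed $s<n$, and mere membership of $|\nabla w|$ in $L^{s}$ gives no decay rate for $\int_{B(0,2\varepsilon)}|\nabla w|^{s}$ to compensate; there is no choice of ``$s$ slightly less than $n$'' that balances the powers. Worse, the term should not be small at all: for the model $w=\log\frac1{|x|}$ one has $\varepsilon^{-1}\int_{B(0,2\varepsilon)\setminus B(0,\varepsilon)}|\nabla w|^{n-1}\,dx\sim 1$, and this is exactly the contribution that becomes the atom $\beta\delta_0$, so no refinement of this bookkeeping can make it vanish, and even its boundedness cannot be extracted from subcritical integrability alone.

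Step (ii) has a second gap: from ``$T$ restricted to the punctured ball is $0$'' one cannot conclude $T=\beta\delta_0$, since distributions supported at a point include derivatives of $\delta_0$; you must first know that $-\Delta_n w$ is a (nonnegative Radon) measure on the \emph{full} ball, and verifying nonnegativity by testing against $\phi\zeta_\varepsilon$ runs into the same uncontrolled cutoff term, so the argument is circular. The standard repair, which is what the paper's Remark \ref{Rem:vis-radon} points to, is: since $w\to\infty$ at the origin and a point has zero $n$-capacity, the extension $w(0)=+\infty$ is $n$-superharmonic on all of $B(0,R)$ (a removability statement that needs to be quoted or proved, not just asserted from Definition \ref{Def:superh}); then \cite[Theorem 2.1]{KM-92} provides a nonnegative Riesz measure $\mu$ with $-\Delta_n w=\mu$ in the distributional sense on $B(0,r)$. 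Local finiteness of $\mu$ then gives $g\in L^1(B(0,r))$ for free (the restriction of $\mu$ to the punctured ball is $g\,dx$ by hypothesis), $\beta=\mu(\{0\})\ge 0$ gives the atom, and Lemma \ref{Lem:nabla-q} applied to the extended function gives $|\nabla w|\in L^p(B(0,r))$ for $p<n$. So the skeleton of your decomposition is right, but the two load-bearing steps --- finiteness of the mass near $0$ and the identification of the singular part --- are not established by the cutoff computation and need the removability-plus-Riesz-measure input made explicit.
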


\begin{remark}\label{Rem:vis-radon}
We remark here that the function $w$ in the above theorem is in fact an $n$-superharmonic in the ball $B(0, 1)$ as the potential of 
the nonnegative Radon measure that is induced from $g + \beta\delta$.
\end{remark} 


The other important contribution in the study of isolated singularity of $n$-harmonic functions is the following result in 
\cite{KV, KV-e}, which is based on previous works in \cite{Serrin-1, Serrin-2, Tol}.

\begin{theorem} \label{Thm:KV}
(\cite[Theorem 1.1]{KV}) Suppose that $u$ is a nonnegative n-harmonic function on the punctured ball $B(0, r_0)
\setminus\{0\}$. Then, there is a number $\gamma$ such that 
\begin{equation}\label{Equ:k-v}
u (x) - \gamma\log \frac 1{|x|} \in L^\infty_{\text{loc}}(B(0, r_0)).
\end{equation}
\end{theorem}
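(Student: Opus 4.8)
\emph{Proof strategy.} The plan is to show that a nonnegative $n$-harmonic function on a punctured ball grows at a precise logarithmic rate by comparing it, on annuli, with the radial $n$-harmonic functions $a+b\log|x|$, and then to pin down the coefficient $\gamma$ by a blow-down argument together with the Liouville theorem for $n$-harmonic functions. The central difficulty is that $\Delta_n$ is not linear, so one cannot simply subtract $\gamma\log\frac1{|x|}$ from $u$ and remain $n$-harmonic; the whole point is to produce the correct $\gamma$ without such a subtraction.

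First I would record monotonicity-type information for the spherical extrema $M(r)=\max_{|x|=r}u$ and $m(r)=\min_{|x|=r}u$, which are finite and continuous since $u$ is continuous on $B(0,r_0)\setminus\{0\}$. On any closed annulus $\{r_1\le|x|\le r_2\}$ I would compare $u$ with the radial $n$-harmonic functions matching $M$, respectively $m$, on the two bounding spheres; the weak comparison principle (Theorem \ref{Lem:WeakCompPrinc}, cf. \cite[Lemma 3.1]{Tol}, \cite{KV}) gives that $u$ lies below the one dominating $M$ on the boundary and above the one dominated by $m$. Evaluating at the intermediate radius and writing $t=\log\frac1{|x|}$ shows that $t\mapsto M(e^{-t})$ is convex and $t\mapsto m(e^{-t})$ is concave. (Equivalently, pass to the conformally equivalent cylinder $S^{n-1}\times\mathbb{R}$ via $(\sigma,t)\mapsto e^{-t}\sigma$, on which $n$-harmonicity is preserved and the radial comparison functions are affine in $t$.) A nonnegative convex function forces $M(e^{-t})/t\to\gamma\in[0,+\infty]$, and a nonnegative concave function is eventually non-decreasing with $m(e^{-t})/t\to\ell\in[0,+\infty)$. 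Applying Serrin's Harnack inequality for nonnegative $n$-harmonic functions on the dyadic annuli $\{r/2<|x|<2r\}$, rescaled, gives $M(r)\le C_n m(r)$ with $C_n$ depending only on $n$, hence $\gamma\le C_n\ell<\infty$. Convexity/concavity then upgrade these rates to the one-sided bounds
\[
M(r)\le \gamma\log\tfrac1r + C,\qquad m(r)\ge \ell\log\tfrac1r - C\qquad(0<r\le r_0/2).
\]

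The heart of the argument, and the step I expect to be the main obstacle, is showing $\gamma=\ell$. Here I would use a blow-down: for a sequence $\lambda_k\downarrow 0$ set $u_k(x):=u(\lambda_k x)/\log\frac1{\lambda_k}$, which is nonnegative and $n$-harmonic on $B(0,r_0/\lambda_k)\setminus\{0\}$. The one-sided bound of the previous step, together with $\log\frac1{\lambda_k\rho}/\log\frac1{\lambda_k}\to1$, shows $\{u_k\}$ is locally uniformly bounded on $\r^n\setminus\{0\}$; by interior regularity and the compactness/stability theory for $n$-harmonic functions a subsequence converges locally uniformly to a nonnegative $n$-harmonic $u_\infty$ on $\r^n\setminus\{0\}$. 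Computing the spherical extrema of $u_k$ and using the same ratio limit gives $\max_{|x|=\rho}u_\infty=\gamma$ and $\min_{|x|=\rho}u_\infty=\ell$ for every $\rho>0$, so $u_\infty$ is a bounded $n$-harmonic function on the punctured space. Since a point has zero $n$-capacity, the singularity of $u_\infty$ is removable, and the Liouville theorem (\cite{Serrin-1, Red, HKM}) forces $u_\infty$ to be constant; hence $\gamma=\ell$.

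Finally I would combine $\gamma=\ell$ with the two one-sided bounds to get, for $0<|x|\le r_0/2$,
\[
\gamma\log\tfrac1{|x|}-C\le m(|x|)\le u(x)\le M(|x|)\le \gamma\log\tfrac1{|x|}+C,
\]
so $\bigl|u(x)-\gamma\log\frac1{|x|}\bigr|\le C$ there; on $r_0/2\le|x|<r_0$ the difference is bounded by continuity. Therefore $u(x)-\gamma\log\frac1{|x|}\in L^\infty_{\text{loc}}(B(0,r_0))$, which is the claim.
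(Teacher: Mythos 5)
Your argument is correct, but note that the paper does not actually prove this statement: Theorem \ref{Thm:KV} is quoted verbatim from \cite[Theorem 1.1]{KV}, and the paper only borrows its blow-down philosophy when proving the genuinely new result, Theorem \ref{Thm:AH-1}, for $n$-superharmonic functions with measure data. So there is no in-paper proof to match; what you have written is a self-contained proof of the cited special case, and it hangs together. Your route combines two ingredients: (i) a Hadamard three-spheres type step, comparing $u$ on annuli with the radial $n$-harmonic functions $a+b\log\frac1{|x|}$ via the weak comparison principle (Theorem \ref{Lem:WeakCompPrinc}) to get convexity of $M(e^{-t})$ and concavity of $m(e^{-t})$ in $t=\log\frac1r$, plus Serrin's Harnack chain on spheres to see that the slopes $\gamma,\ell$ are finite and to obtain the one-sided bounds $M(r)\le\gamma\log\frac1r+C$, $m(r)\ge\ell\log\frac1r-C$; and (ii) the blow-down $u(\lambda_k x)/\log\frac1{\lambda_k}$, with uniform local $C^{1,\alpha}$ estimates, removability of the point singularity (\cite{Serrin-1}) and the Liouville theorem (\cite{Red}, \cite{HKM}) forcing the limit to be a constant, whence $\gamma=\ell$ and the two-sided bound $|u-\gamma\log\frac1{|x|}|\le C$ near the origin. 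Step (ii) is exactly the mechanism the paper attributes to \cite{KV} and re-implements in Section \ref{Sec:Arsove-Huber}; step (i) is where your proof diverges from the paper's later, more elaborate treatment, and it is also where it is strictly limited: the convexity/concavity of the spherical extrema uses that $u$ is exactly $n$-harmonic (both super- and sub-solution), so this shortcut buys a clean elementary proof of \eqref{Equ:k-v} but cannot replace the Wolff-potential, $n$-capacity and $n$-thin-set machinery the paper needs once $-\Delta_n w=\mu\ge0$ is a nontrivial measure. The only points to state with a little more care are routine: the Harnack chain on $\{|x|=r\}$ requires $4r<r_0$ (scale invariance then gives a constant depending only on $n$), and the passage from locally uniform bounds to an $n$-harmonic limit should cite the interior $C^{1,\alpha}$ theory so that one can pass to the limit in the weak formulation; neither affects the validity of the proof.
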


The idea of the proof of this theorem in \cite{KV} is particularly helpful to us. In fact, in some sense, what we would like to have
is the extension of this theorem to cover $n$-superharmonic functions. Our approach combines that in \cite{KV} and the use of the 
nonlinear potential theory \cite{HKM, KM, PV, AM, Lind}. 


\subsection{Non-linear potential theory for $n$-Laplace equations}\label{Subs:potential}

The nonlinear potential theory itself is a vast and profound subject in Mathematics. We certainly do not intend to give an comprehensive
introduction here. Instead we will collect useful facts in a cohesive way that we perceive. To study $n$-Laplace equations
\begin{equation}\label{Equ:n-Laplace}
- \Delta_n w = \mu,
\end{equation}
where $\mu$ is a nonnegative Radon measure representing the mass distribution, 
there is the nonlinear potential theory developed to replace the principle of superposition 
(cf. \cite{KM, HKM, HeKi, PV, AM, Lind}). The fundamental tool is the Wolff potential 
\begin{equation}\label{Equ:Wolff}
W_{1, n}^\mu (x_0, r)  = \int_0^r \mu (B(x_0,  t))^{\frac 1{n-1}} \frac {dt}t.
\end{equation}
The Wolff potential plays the same role in the nonlinear 
potential theory as the Riesz potential plays in the linear one.  And the foundational estimates in the nonlinear potential for the 
equation \eqref{Equ:n-Laplace} is as follows:

\begin{theorem}\label{Thm:KM} 
(\cite[Theorem 1.6]{KM}) Suppose that $w$ is a nonnegative $n$-superharmonic function satisfying \eqref{Equ:n-Laplace}
for a nonnegative Radon measure $\mu$ in $B(x_0, 3r)$. Then
\begin{equation}\label{Equ:Key-potential}
C_1W^{\mu}_{1, n}(x_0, r)\leq w (x_0) \leq C_2 \inf_{B(x_0, r)} w + C_3W^{\mu}_{1, n}(x_0, 2r)
\end{equation}
for some dimensional constants $C_1, C_2, C_3 >0$.
\end{theorem}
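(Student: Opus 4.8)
\medskip
\noindent\textbf{Proof strategy.} This is the Kilpel\"ainen--Mal\'y pointwise potential estimate; we outline how the proof is organized and where the genuine difficulty lies. The plan is to discretize to dyadic scales and telescope. Set $r_j=2^{-j}r$ and $\mu_j=\mu(B(x_0,r_j))$. Since $t\mapsto\mu(B(x_0,t))$ is nondecreasing, a dyadic comparison of the defining integral gives
\[
\sum_{j\ge 1}\mu_j^{\frac{1}{n-1}}\ \lesssim\ W_{1,n}^\mu(x_0,r)\ \lesssim\ \sum_{j\ge 0}\mu_j^{\frac{1}{n-1}},
\]
with implied constants depending only on $n$ (and the analogue with $r$ replaced by $2r$). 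Because $w$ is lower semicontinuous and the balls $B(x_0,r_j)$ shrink to $\{x_0\}$, the numbers $m_j:=\inf_{B(x_0,r_j)}w$ form a nondecreasing sequence with $m_j\uparrow w(x_0)$ (reading $m_j\uparrow+\infty$ when $w(x_0)=+\infty$). Writing $m_j=m_0+\sum_{i<j}(m_{i+1}-m_i)$ and letting $j\to\infty$, each of the two inequalities in \eqref{Equ:Key-potential} reduces to a one-sided control of the increments $m_{i+1}-m_i$ by the $\mu$-mass carried on a comparable dyadic shell.

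\medskip
\noindent\textbf{The lower bound.} The engine is a single-scale estimate: if $v\ge 0$ is $n$-superharmonic on a fixed dilate $B(y,\Lambda\rho)$ with $-\Delta_n v=\nu$, then $\inf_{B(y,\rho)}v\ge c(n)\,\nu(B(y,\rho))^{\frac{1}{n-1}}$. This follows by comparison with a potential: let $\psi$ be the $n$-potential of $\nu|_{B(y,\rho)}$ in $B(y,2\rho)$ with zero boundary values (obtained by balayage / the obstacle problem, cf. \cite{KM,HKM}); since $-\Delta_n v=\nu\ge\nu|_{B(y,\rho)}=-\Delta_n\psi$ and $v\ge 0=\psi$ on $\partial B(y,2\rho)$, the comparison principle (cf. \cite[Lemma 3.1]{Tol}, \cite{KM}) gives $v\ge\psi$, so it suffices to bound $\psi$ from below. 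By the minimum principle $\inf_{B(y,\rho)}\psi$ is attained on $\partial B(y,\rho)$; comparing $\psi$ with a suitable multiple of the $n$-capacitary potential of the concentric pair $\big(\overline{B(y,\rho)},B(y,2\rho)\big)$ --- whose $n$-capacity is a dimensional constant by scaling --- reduces the estimate to the $n$-capacity bound \cite[Lemma 3.9]{KM}. To telescope, we fix a geometric ratio, set $M_j=\inf_{B(x_0,\rho_j)}w$ with $\rho_j\downarrow 0$, and apply the single-scale estimate to the nonnegative $n$-superharmonic function $w-M_j$ on $B(x_0,\rho_{j+1})$ (where $w-M_j\ge 0$ on the relevant dilate) with $-\Delta_n(w-M_j)=\mu$, which gives $M_{j+1}-M_j\ge c(n)\,\mu(B(x_0,\rho_{j+1}))^{\frac{1}{n-1}}$. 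Summing over $j$, letting $j\to\infty$, and noting that the single-scale estimate applied at the outermost scale also gives $M_0=\inf_{B(x_0,r)}w\ge c(n)\,\mu(B(x_0,r))^{\frac{1}{n-1}}$, a comparison of the resulting dyadic sum with the defining integral yields $w(x_0)=\lim_j M_j\ge c(n)\,W_{1,n}^\mu(x_0,r)$ --- the left-hand inequality of \eqref{Equ:Key-potential}.

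\medskip
\noindent\textbf{The upper bound, and the main obstacle.} This is the deep half of the theorem, and the reason the sharp Wolff bound was out of reach before \cite{KM}: without a superposition principle one cannot simply decompose $w$ into an $n$-harmonic part and the potential of $\mu$. The Kilpel\"ainen--Mal\'y argument is again a dyadic iteration on $B_j=B(x_0,r_j)$, now controlling how fast $m_j=\inf_{B_j}w$ may increase: one combines the Caccioppoli inequality for the nonnegative supersolution $w-m_j$ truncated at a level proportional to $m_{j+1}-m_j$ with the weak Harnack inequality for $n$-supersolutions, and --- after reabsorbing oscillation terms over a bounded block of neighboring scales --- obtains a recursion of the shape $m_{j+1}-m_j\le C(n)\,\mu(\tilde B_j)^{\frac{1}{n-1}}$ for a ball $\tilde B_j$ comparable to $B_j$. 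Telescoping from $j=0$ and using $w(x_0)=\lim_j m_j$ then gives, after comparing the dyadic sum with the integral, $w(x_0)-\inf_{B(x_0,r)}w\le C(n)\,W_{1,n}^\mu(x_0,2r)$ --- the right-hand inequality. The technical heart, and the step we expect to be the main obstacle if one insists on a self-contained proof, is the precise choice of truncation levels and test functions that makes the measure term appear with exactly the exponent $\frac{1}{n-1}$ while the oscillation errors are reabsorbed; in the present paper we would simply import this from \cite[Theorem 1.6]{KM} (or from the streamlined expositions in \cite{HKM} and later work), since \eqref{Equ:Key-potential} is used here only as a black box.
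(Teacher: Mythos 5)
The paper offers no proof of this statement: it is imported verbatim as \cite[Theorem 1.6]{KM} and used as a black box, so your decision to defer the technical heart (the upper bound) to Kilpel\"ainen--Mal\'y is exactly the paper's own treatment, and your dyadic-telescoping description of their strategy is a fair account of how the cited proof is organized.

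One caveat, in case your sketch is meant to be read as an actual argument rather than an annotated citation: the mechanism you propose for the single-scale lower estimate $\inf_{B(y,\rho)}v\ge c(n)\,\nu(B(y,\rho))^{\frac1{n-1}}$ does not work as stated. \cite[Lemma 3.9]{KM} bounds the $n$-capacity of superlevel sets \emph{from above} by the measure, i.e.\ it yields inequalities of the shape $\lambda^{n-1}\mathrm{cap}_n(\{v>\lambda\},\Omega)\le\nu(\Omega)$, hence conclusions of the form $\inf v\lesssim \nu^{1/(n-1)}$ (this is precisely how the present paper uses it in Proposition \ref{Prop:unbdd-thin}); it cannot produce the lower bound you need, and comparing $\psi$ with a multiple of the capacitary potential of $\bigl(\overline{B(y,\rho)},B(y,2\rho)\bigr)$ is not justified either, since measures with ordered total masses need not be comparable in any sense usable by the comparison principle (which is itself delicate for measure-data supersolutions and is normally arranged through the obstacle-problem or balayage construction). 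The standard proof of this single-scale estimate instead tests the equation with a cutoff $\eta^n$, estimates $\int\eta^{n-1}|\nabla\eta|\,|\nabla v|^{n-1}$ by the logarithmic Caccioppoli inequality for supersolutions (cf.\ Remark \ref{Rem:theorem 5.15} and \cite[Theorem 5.15]{Lind}) together with the weak Harnack inequality \cite[Theorem 3.51]{HKM}, and so bounds $\nu(B(y,\rho))$ by $C\bigl(\inf_{B(y,\rho)}v\bigr)^{n-1}$; alternatively one can simply cite \cite{KM-92}. With that substitution your telescoping of $M_j=\inf_{B(x_0,2^{-j}r)}w$ does give the left-hand inequality of \eqref{Equ:Key-potential}, and the right-hand inequality remains, as you say, the deep part to be quoted from \cite{KM}.
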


It is easily seen that the study of $n$-Laplace equations is intimately related to $n$-capacity since solutions to $n$-Laplace 
equations are critical points for the functional $\int |\nabla u|^ndx$. We therefore recall the definition of $n$-capacity 
from \cite[Section 3]{KM}. 

\begin{definition}\label{Def:n-capacity}
For a compact subset $K$ of a domain $\Omega$ in Euclidean space $\mathbb{R}^n$, we define
$$
\text{cap}_n(K, \Omega) = \inf \int_\Omega |\nabla u|^n dx
$$
for all $u$ runs through all $u\in C^\infty_0(\Omega)$ and $u\geq 1$ on $K$. Then $n$-capacity for arbitrary subset $E$
of $\, \Omega$ is
$$
\text{cap}_n (E, \Omega) = \inf_{\text{open $G\subset \Omega$ that contains $E$}}\sup_{\text{compact $K\subset G$}}
\text{cap}_n (K, \Omega).
$$
\end{definition}

$n$-capacity is clearly invariant under conformal transformations, and therefore is also called the conformal capacity 
(cf. \cite{G}, for example). The notions of $n$-thinness in the potential theory are important in the study of $n$-superharmonic functions. 
Notions of $n$-thinness were first considered in \cite{AM}, and readers are referred to \cite{AM, HeKi, KM} for more background 
and references. One notion of $n$-thinness is defined via Wiener integral given in \cite{AM, KM}, which we will 
refer to as thinness by Wiener integral. One of the major achievements in \cite{KM} is to establish the complete equivalence 
between the thinness by Wiener integral and the one by Cartan property \eqref{Equ:Br-thin} in general dimensions, based on 
\cite[Theorem 1.6]{KM} and early works \cite{AM, HeKi}. But, these notions of thinness at a point are for potential functions with 
no point charge, which is only known to be the same as the notion of thinness for potential functions with point charge in 2 
dimensions (\cite[Theorem 2]{Br-1} and \cite{AH}) . In higher dimensions, inspired by \cite[Theorem 1.3]{AH}, 
we will introduce a new notion of thinness using $n$-capacity and study its relation to the Cartan property 
\eqref{Equ:AH-thin} for $n$-subharmonic functions at isolated singular point (cf. Definition \ref{Def:n-thin} and 
Theorem \ref{Thm:AH-1} in next section). 


\subsection{$n$-Laplace equations in differential geometry}

What can we do in higher dimensions following the approach in \cite{Hu} by Huber? We have seen successful efforts in 
\cite{SY, Zhu, CQY, CEOY, CH, CH-1} to explored higher dimensional counterparts of Gauss curvature equations \eqref{Equ:gauss}
such as the scalar curvature equations 
$$
 - \frac {4(n-1)}{n-2}\Delta u + R u = \bar R u^\frac {n+2}{n-2},
$$
where $R$ and $\bar R$ are scalar curvature of the metrics $g$ and $\bar g = u^\frac 4{n-2} g$ respectively in dimensions $n\geq 3$;
and the higher order analogue: $Q$-curvature equations, 
$$
P_n w + Q_n = \bar Q_n e^{2nw},
$$
where $P_n = (-\Delta )^n + \ lower \ order$ is the so-called Paneitz type operator and $Q_n, \bar Q_n$ are so-called $Q$-curvature
of the metrics $g$ and $\bar g = e^{2w}g$ respectively in dimensions $2n\geq 2$.  
We have also seen remarkable successes in using fully nonlinear equations of Weyl-Schouten 
curvature, as replacements of Gauss curvature equations, in \cite{CGY, CHY, GLW, mG}. The above mentioned seem to represent 
major developments in conformal geometry and conformally invariant partial differential equations following the approach in \cite{Hu} 
by Huber. 


\subsubsection{$n$-Laplace equations in conformal geometry}
Recall the change of Ricci curvature under conformal change of metrics is
$$
\bar R_{ij} = R_{ij} - \Delta \phi g_{ij} + (2-n) \phi_{i,j} + (n-2) \phi_i\phi_j + (2-n) |\nabla\phi|^2 g_{ij},
$$
where $R_{ij}, \bar R_{ij}$ are Ricci curvature tensors for the metrics $g$ and $\bar g = e^{2\phi}g$ respectively in $n$ dimensions. 
Contracting with $\phi^i$ and $\phi^j$ on both sides of the above equation, one gets that
$$
\phi^i\phi^j\bar R_{ij} = \phi^i\phi^jR_{ij} - |\nabla\phi|^{4-n}\text{div}(|\nabla\phi|^{n-2}\nabla\phi).
$$
Therefore one arrives at another generalization of Gauss curvature equations in higher dimensions, 
\begin{equation}\label{Euq:intro-n-Laplace-g}
- |\nabla\phi|^{2-n}\text{div}(|d\phi|^{n-2}\nabla\phi) + Ric(\frac{\nabla\phi}{|\nabla\phi|}, \frac{\nabla\phi}{|\nabla\phi|}) = \bar Ric (
\frac{\bar\nabla\phi}{|\bar\nabla\phi|}, \frac{\bar\nabla\phi}{|\bar\nabla\phi|})e^{2\phi}.
\end{equation}
Particularly, when $g$ is Ricci-flat, we have
\begin{equation}\label{Equ:intro-n-Laplace}
- \Delta_n \phi = \bar Ric (\frac{\bar\nabla\phi}{|\bar\nabla\phi|}, \frac{\bar\nabla\phi}{|\bar\nabla\phi|})e^{2\phi}|\nabla\phi|^{n-2}.
\end{equation}
In this paper we want to explore properties of $n$-superharmonic functions and the geometric consequences.  
Following the approach in \cite{Hu} by Huber we want to extend the success in 2 dimensions to higher dimensions and 
complement contemporary developments in conformal geometry and conformally invariant partial differential equations.


\subsubsection{Hypersurfaces in hyperbolic space}\label{Subsect:hypersurfaces}
Apparently, the first use of $n$-subharmonic functions in differential geometry was in \cite{BMQ-r} to overcome the limitation of the 
use of subharmonic functions in 2 dimensions or sectional curvature assumptions. 
Inspired by the calculation in \cite{AlCu, AlCu2}, it was calculated and concluded

\begin{theorem*} (\cite[Theorem 3.1]{BMQ-r}) The height function in Busemann coordinates for a hypersurface with nonnegative 
Ricci curvature in hyperbolic space is $n$-subharmonic.
\end{theorem*}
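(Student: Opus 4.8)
The plan is to make the height function explicit as a graph function, to rewrite its flat $n$-Laplacian in terms of the induced metric and second fundamental form of $\Sigma$, and then to convert the curvature hypothesis into a scalar inequality via the Gauss equation. First I would fix Busemann coordinates $(x,\rho)\in\mathbb{R}^n\times\mathbb{R}$ for $\H^{n+1}$, normalized so that $g_{\H^{n+1}}=d\rho^2+e^{-2\rho}|dx|^2$ and the horospheres centered at the distinguished point of $\partial_\infty\H^{n+1}$ are the slices $\{\rho=\mathrm{const}\}$; the height function is then $u:=\rho|_\Sigma$. Since $n$-subharmonicity is a local property it is enough to treat a piece of $\Sigma$ that is a graph $\{\rho=u(x)\}$ over (a piece of) a horosphere, and such a local graph representation exists by strict convexity: the Gauss equation in the space form of curvature $-1$ makes the intrinsic Ricci tensor of $\Sigma$ diagonal, with eigenvalues $-(n-1)+\kappa_i\sum_{j\neq i}\kappa_j$, in a principal frame, so $\mathrm{Ric}_\Sigma\geq 0$ forces $\kappa_i\sum_{j\neq i}\kappa_j\geq n-1>0$ for every $i$, whence a short sign argument shows that all $\kappa_i$ have one and the same sign; with the co-orientation for which $\kappa_i>0$ (the one compatible with $\Sigma$ lying inside a horoball, cf.\ \cite{AlCu,BMQ-r}), $\Sigma$ is locally a strictly convex graph.

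Next I would carry out the standard graph computation. In the frame $E_i=\partial_i+u_i\partial_\rho$ one finds the induced metric $g_{ij}=e^{-2u}\delta_{ij}+u_iu_j$, the upward unit normal $\nu=W^{-1}\bigl(\partial_\rho-e^{2u}u^k\partial_k\bigr)$ with $W=\sqrt{1+e^{2u}|\nabla u|^2}$, and the second fundamental form $h_{ij}=W^{-1}\bigl(u_{ij}+2u_iu_j+e^{-2u}\delta_{ij}\bigr)$. Solving for $u_{ij}$, substituting into $\Delta_n u=|\nabla u|^{n-4}\bigl(|\nabla u|^2\Delta u+(n-2)\langle\nabla u,(\nabla^2u)\nabla u\rangle\bigr)$, and simplifying by means of $e^{-2u}+|\nabla u|^2=e^{-2u}W^2$, I expect to reach, on $\{\nabla u\neq 0\}$,
\begin{equation}\label{Equ:plan-key}
\Delta_n u=|\nabla u|^{n-2}\,W\Bigl(\mathrm{tr}_\delta h+(n-2)\,\frac{h(\nabla u,\nabla u)}{|\nabla u|^2}-2(n-1)\,e^{-2u}W\Bigr),
\end{equation}
where $h(\nabla u,\nabla u)$ denotes the second fundamental form evaluated on the graph vector corresponding to $\nabla u$, together with the auxiliary relation $\mathrm{tr}_\delta h=e^{-2u}H+h(\nabla u,\nabla u)/(e^{-2u}W^2)$, $H=\mathrm{tr}_g h=\sum_i\kappa_i$, both of which follow from $g^{ij}=e^{2u}\bigl(\delta^{ij}-u^iu^j/(e^{-2u}W^2)\bigr)$.

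The hard part is closing the estimate. Since $W\geq 1$ and $n\geq 2$, once the auxiliary relation is inserted the coefficient of $h(\nabla u,\nabla u)$ in \eqref{Equ:plan-key} is positive, so it may be bounded below by the Rayleigh estimate $h(\nabla u,\nabla u)\geq\kappa_{\min}\,|\nabla u|_g^2=\kappa_{\min}\,e^{-2u}W^2|\nabla u|^2$; together with $|\nabla u|^2=e^{-2u}(W^2-1)$ this reduces $\Delta_n u\geq 0$ to the scalar inequality
\begin{equation}\label{Equ:plan-scalar}
H-\kappa_{\min}+(n-1)\,\kappa_{\min}W^2\ \geq\ 2(n-1)\,W .
\end{equation}
Finally I would use only the component of the Gauss--Ricci inequality in the direction of the smallest principal curvature, $H-\kappa_{\min}=\sum_{j\neq\min}\kappa_j\geq(n-1)/\kappa_{\min}$ (recall $\kappa_{\min}>0$), and close with the arithmetic--geometric mean inequality,
\begin{equation}\label{Equ:plan-amgm}
H-\kappa_{\min}+(n-1)\,\kappa_{\min}W^2\ \geq\ (n-1)\Bigl(\tfrac{1}{\kappa_{\min}}+\kappa_{\min}W^2\Bigr)\ \geq\ 2(n-1)\,W ,
\end{equation}
which is exactly \eqref{Equ:plan-scalar}. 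This yields $\Delta_n u\geq 0$ wherever $\nabla u\neq 0$; and since $u\in C^2$ the field $|\nabla u|^{n-2}\nabla u$ is locally Lipschitz and vanishes on $\{\nabla u=0\}$, so its distributional divergence is a nonnegative function, $u$ is a weak $n$-subsolution, and hence (Lemma~\ref{Lem:equiv def}) $n$-subharmonic in the sense of Definition~\ref{Def:superh}.

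The decisive step, and the main obstacle, is thus the algebraic reduction: recognizing that after \eqref{Equ:plan-key} the whole problem collapses, via the Rayleigh bound and a single scalar consequence of the Gauss equation, to the elementary inequality $\kappa_{\min}^{-1}+\kappa_{\min}W^2\geq 2W$. A secondary subtlety is fixing the co-orientation so that $\kappa_{\min}>0$ rather than $\kappa_{\min}<0$ — with the opposite sign the same computation makes $u$ $n$-superharmonic — which is where completeness and the fact that $\Sigma$ stays inside a horoball, so that $u$ attains an interior minimum, come in.
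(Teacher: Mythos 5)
This statement is only quoted in the paper from \cite[Theorem 3.1]{BMQ-r}; the paper itself contains no proof of it, so there is nothing internal to compare with line by line. Judged on its own terms, your computation is correct and is the natural graph computation one expects behind the citation. I verified the key steps: with $g_{\mathbb{H}}=e^{-2\rho}|dx|^2+d\rho^2$ and $\Sigma=\{\rho=u(x)\}$ one gets $g_{ij}=e^{-2u}\delta_{ij}+u_iu_j$, $\nu=W^{-1}(\partial_\rho-e^{2u}u^k\partial_k)$, $h_{ij}=W^{-1}(u_{ij}+2u_iu_j+e^{-2u}\delta_{ij})$; substituting $u_{ij}=Wh_{ij}-2u_iu_j-e^{-2u}\delta_{ij}$ into the flat $\Delta_n u$ yields exactly your key identity, and your trace relation follows from the stated $g^{ij}$. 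Since the coefficient of $h(\nabla u,\nabla u)$ is positive, the Rayleigh bound $h(\nabla u,\nabla u)\geq\kappa_{\min}\,e^{-2u}W^2|\nabla u|^2$ may be inserted, and with $|\nabla u|^2=e^{-2u}(W^2-1)$ the bracket reduces to $H-\kappa_{\min}+(n-1)\kappa_{\min}W^2-2(n-1)W$, which is nonnegative by $\kappa_{\min}(H-\kappa_{\min})\geq n-1$ (the pointwise form of $\mathrm{Ric}\geq 0$ recorded in Section 2.6.1 of the paper) together with the arithmetic--geometric mean inequality. The treatment of the set $\{\nabla u=0\}$ via the locally Lipschitz field $|\nabla u|^{n-2}\nabla u$ is also fine.

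The one genuine soft spot is the orientation normalization, which you flag but do not close correctly. The condition $\kappa_i(H-\kappa_i)\geq n-1$ is orientation-independent; it forces all $\kappa_i$ to share a sign, but whether that sign is positive for the upward normal in your formula is a global matter, and the literal unoriented statement fails without it: a horosphere centered at a finite boundary point, written near its top as a vertical graph, has $\mathrm{Ric}\equiv 0$ and yet $\Delta_n u\le 0$ (strictly, off the critical point). Your proposed fix --- that $u$ attains an interior minimum because $\Sigma$ lies inside a horoball --- is not available where the result is used in this paper: in Section 6 the height function $\tau(y)$ over a punctured ball tends to $-\infty$ at the puncture, so there is no interior minimum, and the fact that $\Sigma$ ``stays inside a horosphere'' is a conclusion of Theorem 6.2, not a hypothesis. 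A clean way to fix the sign in the relevant setting (complete, properly embedded, end at $p_\infty$, vertical graph) is to use global convexity: the strictly convex hypersurface bounds a convex body, and the region below a global vertical graph cannot be geodesically convex, since its closure contains the whole finite ideal boundary $\mathbb{R}^n\subset\partial_\infty\mathbb{H}^{n+1}$ and the convex hull of that set is all of $\mathbb{H}^{n+1}$; hence the convex side is above the graph, the upward normal is the inward one, and $\kappa_i>0$ with respect to it, an orientation that persists under the coordinate inversion used in Section 6. With that argument (or with the orientation simply included as a hypothesis, as in the setting of \cite{BMQ-r}), your proof is complete.
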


It is perhaps worth to mention, for immersed hypersurfaces $\Sigma^n \subset \h^{n+1}$ with appropriate orientation,  the following 
successively stronger pointwise convexity conditions on the principal curvatures $\kappa_1,\dots,\kappa_n$: 
$$
\begin{array}{ll}
\kappa_i >  0 & \text{\em strict convexity}\\
\kappa_i (\sum_{l=1}^{n}\kappa_l) -  \kappa_i^2 \geq n-1  & \text{\em nonnegative Ricci curvature}\\
\kappa_i \kappa_j \geq 1 & \text{\em nonnegative sectional curvature for $i\neq j$}
\end{array}
$$ 
This observation enables the authors in \cite{BMQ-r} to improve the end structure theorem of \cite{AlCu, AlCu2} as follows:

\begin{theorem*} (\cite[Main Theorem]{BMQ-r}) For $n \geq 3$, suppose that $\Sigma$ is a complete and noncompact 
hypersurface with nonnegative Ricci curvature properly embedded in hyperbolic space $\mathbb{H}^{n+1}$. Then 
$\partial_\infty\Sigma$ consists of at most two points. The case that  $\partial_\infty\Sigma$ consists of two points is 
a rigidity condition that forces $\Sigma$ to be an equidistant hypersurface about a geodesic line. 
\end{theorem*}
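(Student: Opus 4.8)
The plan is to split into cases according to the number of ends of $\Sigma$ and, in each case, to combine the Gauss--Codazzi equations with the Cheeger--Gromoll splitting theorem and with Theorem \ref{Thm:main theorem-4-intro}. \emph{First, strict convexity.} Fix a unit normal $\nu$ with principal curvatures $\kappa_1,\dots,\kappa_n$; the Gauss equation in $\mathbb{H}^{n+1}$ gives, for a unit principal vector $e_i$,
\[
\mathrm{Ric}_\Sigma(e_i)=-(n-1)+\kappa_i\sum_{j\neq i}\kappa_j ,
\]
so $\mathrm{Ric}_\Sigma\geq 0$ means $\kappa_i\sum_{j\neq i}\kappa_j\geq n-1>0$ for every $i$. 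In particular no $\kappa_i$ vanishes; and if the largest principal curvature were positive and the smallest negative the inequality for the smallest would fail, so all $\kappa_i$ have the same sign, and after possibly flipping $\nu$ we may take $\kappa_i>0$ everywhere. Thus $\Sigma$ is strictly locally convex and its shape operator $S$ is invertible at every point.

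\emph{Counting ends, and the one-ended case.} Since $\mathrm{Ric}_\Sigma\geq 0$, if $\Sigma$ had two or more ends it would contain a line and hence, by Cheeger--Gromoll, be isometric to a product $\mathbb{R}\times N$; such a product has more than one end only when $N$ is compact, and then has exactly two. So $\Sigma$ has one or two ends, and $\partial_\infty\Sigma\neq\emptyset$ because $\Sigma$ is properly embedded and noncompact. If $\Sigma$ has a single end, Theorem \ref{Thm:main theorem-4-intro} applies verbatim: $\Sigma$ is a global graph $\rho=\rho(x)$ over all of $\mathbb{R}^n$ in Busemann coordinates with $m\log|x|+o(\log|x|)\leq\rho(x)\leq m\log|x|+C$ for some $m\in[0,1]$ and stays inside a horosphere. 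As the graph exhausts $\mathbb{R}^n$ and $\rho$ is finite on compacta and blows up as $|x|\to\infty$ (or $\Sigma$ is a horosphere if $m=0$), $\Sigma$ can approach only the point about which the Busemann coordinate is based; hence $\partial_\infty\Sigma$ is one point, and $m=0$ gives a horosphere.

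\emph{The two-ended case: rigidity.} Here $\Sigma\cong\mathbb{R}\times N$ isometrically, $N^{n-1}$ compact with $\mathrm{Ric}_N\geq 0$, and the splitting field $\partial_s$ is parallel on $\Sigma$, so $R^\Sigma(\,\cdot,\cdot\,)\partial_s=0$. Feeding this into the Gauss equation $R^\Sigma(X,Y)Z=-\big(\langle Y,Z\rangle X-\langle X,Z\rangle Y\big)+\langle SY,Z\rangle SX-\langle SX,Z\rangle SY$ with $Z=\partial_s$ and $X,Y\perp\partial_s$ gives $\langle SY,\partial_s\rangle SX=\langle SX,\partial_s\rangle SY$; since $n\geq 3$ the subspace $S(\partial_s^{\perp})$ has dimension $n-1\geq 2$ while $S$ is injective, which is impossible unless $\langle SX,\partial_s\rangle=0$ for all $X\perp\partial_s$, i.e.\ $\partial_s$ is a principal direction, $S\partial_s=\lambda\partial_s$ with $\lambda>0$. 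Taking then $Y=Z=\partial_s$, $X\perp\partial_s$ in the same identity yields $X=\lambda SX$, so $\Sigma$ is umbilic on $\partial_s^{\perp}$ with principal curvature $\lambda^{-1}$, and $\mathrm{Ric}_\Sigma\geq 0$ along $\partial_s^{\perp}$ forces $(n-2)(\lambda^{-2}-1)\geq 0$, hence $\lambda<1<\lambda^{-1}$ (the value $\lambda=1$ is excluded, as a compact totally umbilic hypersurface with principal curvature $1$ does not exist). So each slice $\{s\}\times N$ is totally geodesic in $\Sigma$ and totally umbilic in $\mathbb{H}^{n+1}$ with principal curvature $\lambda^{-1}>1$ in the direction $\nu$, hence a geodesic sphere of radius $r$ with $\coth r=\lambda^{-1}$; the slices are mutually isometric (product metric), so $r$ is constant, and they foliate $\Sigma$, so their centers sweep out a complete geodesic $\gamma$. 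Therefore $\Sigma$ is the equidistant hypersurface at distance $r$ from the geodesic line $\gamma$, whose asymptotic boundary is exactly the two endpoints of $\gamma$. Together with the previous paragraph this shows $\partial_\infty\Sigma$ has at most two points, with two occurring precisely for equidistant hypersurfaces about a geodesic line.

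\emph{Where the difficulty sits.} The load-bearing points are two. First, that one end forces a single point at infinity is exactly where nonnegative Ricci does genuine analytic work: it is false for merely locally convex hypersurfaces --- a totally geodesic $\mathbb{H}^n\subset\mathbb{H}^{n+1}$ has one end but a hemisphere at infinity --- and ruling this out is the content of Theorem \ref{Thm:main theorem-4-intro}, hence ultimately of the Wolff-potential and $n$-capacity estimates behind Theorems \ref{Thm:main theorem-1-intro} and \ref{Thm:main theorem-2-intro}. Second, in the two-ended case the delicate point is upgrading the intrinsic splitting into the rigid extrinsic picture (that the umbilic slices are concentric geodesic spheres of constant radius about a geodesic line), which needs the Gauss--Codazzi analysis above together with the classification of compact totally umbilic hypersurfaces; the step of $S(\partial_s^{\perp})$ being $\geq 2$-dimensional fails for $n=2$, which is exactly why the theorem requires $n\geq 3$.
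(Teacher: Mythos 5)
First, a point of logic: the paper does not prove this statement at all --- it is quoted from \cite{BMQ-r} and used as an \emph{input} for Section \ref{Sec:hypersurfaces} (together with \cite[Theorem 3.1]{BMQ-r}), so there is no internal proof to compare against; your argument must stand on its own, and its load-bearing step does not. The one-ended case of your proposal invokes Theorem \ref{Thm:main theorem-4-intro} ``verbatim,'' but that theorem is exactly Theorem \ref{Thm:main theorem-3}, whose proof in Section \ref{Sec:hypersurfaces} is set up explicitly ``in the light of \cite[Main Theorem]{BMQ-r}'': the hypothesis ``one single end'' is used there in the sense that the asymptotic boundary is already a single point $p_\infty$ (see Lemma \ref{Lem:under equidistant}, whose hypothesis reads ``one single end at $p_\infty$'' and whose proof also imports the global strict convexity and the graph structure of $\Sigma$ from \cite{BMQ-r,AlCu,AlCu2}). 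So either you read ``single end'' intrinsically, in which case Theorem \ref{Thm:main theorem-4-intro} as proved in this paper does not apply to your situation, or you read it as ``$\partial_\infty\Sigma$ is one point,'' in which case you have assumed precisely what you are trying to prove. An independent argument that an intrinsically one-ended, properly embedded hypersurface with $\mathrm{Ric}\geq 0$ has a one-point asymptotic boundary is the actual hard content of \cite{BMQ-r} (carried out there through the $n$-subharmonicity of the Busemann height function and the convexity theory of Alexander--Currier, not through Cheeger--Gromoll plus Theorem \ref{Thm:main theorem-4-intro}), and your proposal does not supply it --- as you yourself observe in your closing paragraph. (Also, the aside about totally geodesic $\mathbb{H}^n\subset\mathbb{H}^{n+1}$ is slightly off: its asymptotic boundary is an equatorial $(n-1)$-sphere, not a hemisphere, and it has $\mathrm{Ric}=-(n-1)$, so it is not even weakly relevant as a near-counterexample.)

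The two-ended case is a genuinely different and mostly sound route: the Gauss-equation computation with the parallel splitting field, the invertibility of the shape operator, and the observation that $\dim S(\partial_s^{\perp})\geq 2$ requires $n\geq 3$ are all correct, and the curvature bookkeeping ($S\partial_s=\lambda\partial_s$, $SX=\lambda^{-1}X$ on $\partial_s^{\perp}$, $\mathrm{Ric}_\Sigma(X)=(n-2)(\lambda^{-2}-1)$) checks out. But the final identification is asserted rather than proved: from ``each slice $\{s\}\times N$ is a compact umbilic $(n-1)$-submanifold of $\mathbb{H}^{n+1}$ with principal curvature $\lambda^{-1}>1$ in the direction $\nu$'' you still need to show that its mean curvature vector is parallel, that it is therefore a metric $(n-1)$-sphere lying in a totally geodesic $\mathbb{H}^n$, that the radius is the same for every slice, and --- the real point --- that the curve of centers is a complete geodesic, so that $\Sigma$ is the equidistant tube about it. ``They foliate $\Sigma$, so their centers sweep out a complete geodesic'' is not an argument; it needs a computation using the parallelism of $\partial_s$ and the umbilicity of the slices (or an appeal to a classification of isometric products isometrically immersed in space forms). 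Until the one-ended step is replaced by an independent argument and this identification is carried out, the proposal does not constitute a proof of the cited theorem.
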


In this paper we will use the properties of $n$-superharmonic functions to derive asymptotic behaviors for hypersurfaces in 
herperbolic space with nonnegative Ricci and improve the asymptotic results in \cite{AlCu, AlCu2}. 


\section{Higher dimensional Arsove-Huber's theorem}\label{Sec:Arsove-Huber}

In this section our goal is to extend Theorem \ref{Thm:AH-thin} and \eqref{Equ:asymptotic} (cf. \cite[Theorem 1.3]{AH}) 
in general dimensions. First we define a notion of thinness by capacity inspired by that in 2 dimensions in 
\cite[Theorem 1.3]{AH} for potential functions with point charge.
\\

For $x_0\in\mathbb{R}^{n}$, we set
\begin{align*}
\omega(x_0, i) & =\{x\in \mathbb{R}^n: \ 2^{-i-1}\leq|x - x_0|\leq2^{-i}\} \text{ and }\\
\Omega(x_0, i) & =\{x\in \mathbb{R}^n: \ 2^{-i-2}<|x - x_0|<2^{-i+1}\}.
\end{align*}
And we set
\begin{align*}
\omega(\infty,i) & =\{x\in \mathbb{R}^n: \ 2^{i}\leq|x|\leq2^{i+1}\} \text{ and } \\
\Omega(\infty,i) & =\{x\in \mathbb{R}^n: \ 2^{i-1}<|x|<2^{i+2}\}.
\end{align*}

\begin{definition}\label{Def:n-thin}
Let $E\subset\mathbb{R}^{n}$ and $x_0\in\mathbb{R}^{n}$. We say $E$
is $n$-thin at $x_0$, if 
\[
\sum_{i=1}^{\infty}i^{n-1}cap_{n}(E\cap\omega(x_0,i),\Omega(x_0,i))<+\infty.
\]
Clearly $E$ is trivially $n$-thin if $x_0\notin \bar E$. Similarly, we say $E$ is $n$-thin at $\infty$, if 
\[
\sum_{i=1}^{\infty}i^{n-1}cap_{n}(E\cap\omega(\infty,i),\Omega(\infty,i))<+\infty.
\]
Again, $E$ is trivially $n$-thin at $\infty$ if $E$ is bounded. 
\end{definition}

Clearly the inversion $\frac x{|x|^2}$ of $\mathbb{R}^n$ takes a subset $E\subset\mathbb{R}^n$ that is $n$-thin at infinity 
to a subset $\tilde E$ that is $n$-thin at the origin.

\begin{theorem}\label{Thm:AH-1}
Let $w$ 
be a nonnegative lower semi-continuous function that is $n$-superharmonic in $B(0, 2)\subset \r^n$ and 
\[
-\Delta_n w=\mu\ge 0
\]
for a Radon measure $\mu\ge 0$.
Then there is a set $E\subset\mathbb{R}^n$, which is $n$-thin at the origin, such that
\[
\lim_{x\notin E \text{ and } |x|\rightarrow 0}\frac{w(x)}{\log\frac{1}{|x|}}=\liminf_{|x|\rightarrow0}\frac{w(x)}{\log\frac{1}{|x|}}=m\ge 0
\]
and
\[
w(x)\geq m\log\frac{1}{|x|} - C \ \text{ for $x\in B(0, 1)\setminus \{0\}$ and some $C$}.
\]
Moreover, if $w\in C^2(B(0,2)\setminus \{0\})$ and $(B(0, 2)\setminus \{0\},e^{2w}|dx|^2)$ is complete at the origin, then $m\geq1$. 
\end{theorem}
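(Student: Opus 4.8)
The plan is to prove Theorem~\ref{Thm:AH-1} by the four-step blow-down scheme outlined in the introduction, with the Wolff-potential and $n$-capacity estimates of \cite{KM} (Theorem~\ref{Thm:KM} above) as the engine. Throughout, write $\mathcal{L}=\log\frac1{|x|}$ near the origin and set $m=\liminf_{|x|\to0}w(x)/\mathcal{L}$, which is a well-defined number in $[0,+\infty]$; the comparison principle (Theorem~\ref{Lem:WeakCompPrinc}), applied to $w$ against multiples of the fundamental solution $c\log\frac1{|x|}$ on small punctured balls, together with $\mu\ge0$ and $w\ge0$, will show $m<\infty$ and will give the lower bound $w(x)\ge m\log\frac1{|x|}-C$ on $B(0,1)\setminus\{0\}$ directly. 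The real work is the upper bound off an $n$-thin set.

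\medskip

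\textbf{Step 1 (bounding the blow-down quotient outside a thin set).} For dyadic scales $r=2^{-j}$ consider $w_r(\xi)=w(r\xi)/\log\frac1r$ on the annulus $\tfrac12\le|\xi|\le2$. Fix a point $x$ with $|x|=2^{-i}$ near the origin. By Theorem~\ref{Thm:KM} applied on $B(x,3\rho)$ with $\rho\sim|x|$, we have $w(x)\le C_2\inf_{B(x,\rho)}w+C_3 W_{1,n}^\mu(x,2\rho)$, so a bad point (one where $w(x)/\mathcal{L}$ exceeds $m+\varepsilon$) forces $\inf_{B(x,\rho)}w$ to be comparable to $\mathcal{L}$ (giving lower control, good) or forces the Wolff potential $W_{1,n}^\mu(x,2\rho)$ to be large compared to $\mathcal{L}$. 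The set where the latter happens is controlled by a $\mu$-mass/Wolff estimate, and the classical potential-theoretic inequality relating such a set to $n$-capacity — this is where \cite[Lemma~3.9]{KM} (cf.\ \eqref{Equ:potential-estimate}) enters — shows $\mathrm{cap}_n(\hat E\cap\omega(0,i),\Omega(0,i))$ is bounded by a quantity whose $i^{n-1}$-weighted sum over $i$ is controlled by $\sum_i \mu(\omega(0,i))^{\text{(something)}}$, hence finite because $\mu(B(0,1))<\infty$. This produces an $n$-thin set $\hat E$ outside of which $w_r$ is bounded (uniformly in $r$ dyadic, after a routine Harnack chaining within annuli).

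\medskip

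\textbf{Steps 2--3 (sequential limits and their uniqueness).} Outside $\hat E$ the functions $w_r$ are bounded $n$-superharmonic; using the cut-off/truncation device of \cite{DHM} one modifies $w_r$ to a genuinely bounded $n$-superharmonic function on $\tfrac12\le|\xi|\le2$, extracts a subsequence $r_k\to0$ with $w_{r_k}\to v$ (locally uniformly off $\hat E$, via the $C^{1,\alpha}$ estimates for the $n$-Laplacian), and notes $v$ is $n$-harmonic on the punctured annulus (the measure term rescales away since $\mu(B(0,r))\to0$); by the Liouville/removable-singularity results of \cite{Serrin-1,Red,HKM} together with the isolated-singularity structure (Theorem~\ref{Thm:KV}), $v$ is a constant. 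Two different subsequential limits $v\equiv a$ and $v\equiv b$ would contradict the comparison principle \cite[Lemma~3.1]{Tol},\cite{KV,KV-e}: comparing $w$ on a fixed small punctured ball against the $n$-harmonic functions interpolating the boundary data pins $a=b$, and evaluating against $m\log\frac1{|x|}$ identifies the common value with $m$. Thus \emph{every} dyadic blow-down limit equals $m$; in particular $m<\infty$.

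\medskip

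\textbf{Step 4 (uniform convergence and the final thin set).} Knowing the limit is the single constant $m$, one re-runs Step~1 but now estimating the $n$-capacity of the set $E\cap\omega(0,i)$ where $w(x)/\mathcal{L}>m+\varepsilon$; the uniqueness of the limit upgrades the per-scale capacity bounds to a form (cf.\ \eqref{Equ:improved-potential-estimate}) whose $i^{n-1}$-weighted tail is summable, and here the du~Bois-Reymond theorem (\cite{Rey}, \cite[(5)~p.~40]{Brom}) is invoked to choose $\varepsilon=\varepsilon_i\to0$ slowly enough that the resulting set $E=\bigcup_i E_i$ is still $n$-thin while capturing the full $\limsup$. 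Off $E$ one then has $w(x)/\mathcal{L}\to m$, and combined with the Step~0 lower bound this is exactly the asserted clean asymptotics. \textbf{The main obstacle} I expect is Step~4: converting the crude scale-by-scale capacity bound from the first run into a summable weighted series requires squeezing the Wolff-potential/$n$-capacity estimate of \cite{KM} to near-optimality and then balancing the vanishing threshold $\varepsilon_i$ against it — the du~Bois-Reymond trick is exactly what makes this balancing possible, but getting the quantitative dependence right is delicate.

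\medskip

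\textbf{Completeness forces $m\ge1$.} Finally, assume $w\in C^2(B(0,2)\setminus\{0\})$ and that $e^{2w}|dx|^2$ is complete at the origin. A radial curve $\gamma(t)$ approaching $0$ has $g$-length $\int e^{w(\gamma(t))}\,|\gamma'(t)|\,dt$; completeness means every such curve has infinite length, which by the lower bound $w\ge m\log\frac1{|x|}-C$ and a short computation ($\int_0 |x|^{-m}\,d|x|=\infty$ iff $m\ge1$) forces $m\ge1$. One must check completeness is not already violated by the full metric (not just radial rays); but any curve to the origin projects, after arclength reparametrization in $|x|$, to give the same divergence criterion, so $m\ge1$ is both necessary and the content of the last clause.
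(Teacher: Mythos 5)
Your overall scheme (bounded blow-down quotient off a thin set via the Wolff potential and \cite[Theorem 1.6, Lemma 3.9]{KM}, the cut-off of \cite{DHM}, Liouville plus the Tolksdorf comparison for uniqueness of the sequential limits, and a second run with du Bois--Reymond to extract the final thin set) is essentially the paper's proof. But the last clause is argued incorrectly: you claim completeness of $e^{2w}|dx|^2$ at the origin together with the \emph{lower} bound $w\geq m\log\frac1{|x|}-C$ forces $m\geq 1$. That lower bound only gives $\mathrm{length}\geq c\int_0 r^{-m}\,dr$, and completeness \emph{asserts} the length is infinite, so nothing is forced: if $m<1$ the actual length along every radial ray can still be infinite, because $w$ may be much larger than $m\log\frac1{|x|}$ along each ray (in particular on the exceptional set $E$; the matching upper bound $w\leq(m+\varepsilon)\log\frac1{|x|}$ is only known off $E$). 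The correct implication needs the \emph{upper} bound along some curve reaching the origin: one must produce a ray $P$ with $P\cap E\cap B(0,r_0)=\emptyset$, so that $\infty=\int_{P\cap B(0,r_0)}e^{w}\,dr\leq\int_0^{r_0}r^{-(m+\varepsilon)}\,dr$, whence $m+\varepsilon\geq1$. Producing that ray is a genuine step: the paper uses the radial projection $Pr:\omega(0,0)\to\partial B(0,1)$ (a Lipschitz map), monotonicity of $n$-capacity under Lipschitz maps (\cite[Theorem 5.2.1]{AH96}, alternatively Lemma \ref{Lem:gehring}), scaling invariance, and the summability $\sum_i i^{n-1}\mathrm{cap}_n(E\cap\omega(0,i),\Omega(0,i))<\infty$ to show the capacity of $\bigcup_{i\geq i_0}Pr(2^iE\cap\omega(0,0))$ tends to $0$, while $\mathrm{cap}_n(\partial B(0,1),\Omega(0,0))>0$, leaving a direction whose ray misses $E$ near $0$. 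Your remark about "projecting after arclength reparametrization" does not supply this, so as written the $m\geq1$ step fails.

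Two smaller points, both repairable but worth flagging. First, in Steps 2--3 you invoke $C^{1,\alpha}$ estimates to get locally uniform convergence of the blow-downs; these are not available since the right-hand sides are merely Radon measures, and the paper only obtains weak $W^{1,n}$ and strong $W^{1,p}$ ($p<n$) convergence (Lemma \ref{Lem:convergence lemma}, resting on \cite{Zheng}), pinning the constant afterwards with the weak Harnack inequality applied at the minimum points on $|\xi|=1$. Second, the "re-run" in Step 4 needs more than uniqueness of the limit: because the constant $C_2$ in \eqref{Equ:Key-potential} exceeds $1$, a naive repetition only yields $\limsup\leq C_2\gamma^-+\dots$; the paper applies the estimate to $w-\inf_{B(y,\frac34|y|)}w$ and uses the improved infimum estimate (Lemma \ref{Lem:better inf estimate}) so that the $C_2$-term is $o(\log\frac1{|y|})$, and du Bois--Reymond is applied to the convergent series $\sum_i\mu(\Omega_i)$ to select the vanishing thresholds $\zeta_i$ in \eqref{Equ:improved-potential-estimate}. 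Your sketch gestures at the right balancing but omits this subtraction trick, which is what makes the argument close.
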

 To study the local behavior for a nonnegative
$n$-superharmonic function $w$ on the punctured ball, we follow the idea from \cite{KV} to 
consider the blow-down
\begin{equation}\label{Equ:blow-down}
w_r(\xi) = \frac {w(r\xi)}{\log \frac 1r} \text{ for $\xi \in B(0, \frac {2}r)\setminus \{0\}$ as $r\to0$}.
\end{equation}


\subsection{The first step in the proof of Theorem \ref{Thm:AH-1}}\label{Subsect:step-1-AH}
The first we need is that the quotient $\frac {w(x)}{\log \frac 1{|x|}}$ is mostly uniformly bounded. 
Therefore the following proposition is the first key step to prove Theorem \ref{Thm:AH-1}.

\begin{proposition}\label{Prop:unbdd-thin} Assume the same assumptions as in Theorem \ref{Thm:AH-1}.
Then, there is a set $\hat E$, which is $n$-thin at the origin, and a constant $\hat C$ such that 
\begin{equation}\label{Equ:bound-blow-down}
0\leq \frac {w(x)}{\log \frac 1{|x|}}\leq \hat C
\end{equation}
for $x\in (B(0, 1)\setminus\{0\})\setminus \hat E$. 
\end{proposition}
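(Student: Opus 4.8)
The plan is to exploit the two-sided Wolff potential estimate of Kilpeläinen–Malý (Theorem \ref{Thm:KM}) together with the $n$-capacity estimates for the Wolff potential from \cite[Lemma 3.9]{KM}. The idea is that $w(x)/\log\frac1{|x|}$ can only fail to be bounded at points where the Wolff potential $W^\mu_{1,n}(x, \rho)$ (at an appropriate scale $\rho \sim |x|$) is much larger than $\log\frac1{|x|}$, and the set of such points carries small $n$-capacity at each dyadic scale. First I would, on the annulus $\omega(0,i)$ where $2^{-i-1}\le |x|\le 2^{-i}$, apply the lower bound $C_1 W^\mu_{1,n}(x, \rho_i) \le w(x)$ with $\rho_i$ comparable to $2^{-i}$ so that $B(x,3\rho_i)\subset B(0,2)$. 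The upper bound $w(x)\le C_2\inf_{B(x,\rho_i)}w + C_3 W^\mu_{1,n}(x,2\rho_i)$ combined with a uniform control of $\inf w$ on the annulus (for instance by comparing with an $n$-harmonic function, or by using that $w$ is bounded below and applying the Harnack-type consequences of Theorem \ref{Thm:KM} at a reference point) reduces the boundedness of the quotient to the boundedness of $W^\mu_{1,n}(x,2\rho_i)/\log\frac1{|x|}$.

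Next I would define, for a large constant $\Lambda$ to be chosen, the bad set at scale $i$ as
\[
\hat E_i = \Big\{ x\in \omega(0,i) : W^\mu_{1,n}(x, 2^{-i+1}) > \Lambda\, i \Big\},
\]
and set $\hat E = \bigcup_i \hat E_i$. Outside $\hat E$ the quotient \eqref{Equ:bound-blow-down} is bounded by a constant $\hat C$ depending only on $n$, $\Lambda$, and the lower bound / reference value of $w$; this handles the first assertion of the Proposition modulo the thinness claim. The remaining task is to bound $\mathrm{cap}_n(\hat E_i, \Omega(0,i))$. For this I would invoke \cite[Lemma 3.9]{KM}, which gives a capacity estimate for superlevel sets of Wolff potentials of a measure: roughly, $\mathrm{cap}_n\big(\{W^{\mu}_{1,n}(\cdot,R)>t\}, B\big) \le c(n)\,\mu(B)\, t^{1-n}$ for the relevant localized measure, after restricting $\mu$ to a slightly enlarged annulus $\tilde\omega_i$ (the part of $\mu$ outside only contributes a bounded amount to the Wolff potential on $\omega(0,i)$, which can be absorbed into the constant). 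This yields
\[
\mathrm{cap}_n\big(\hat E_i\cap\omega(0,i), \Omega(0,i)\big) \le \frac{c(n)}{\Lambda^{n-1}}\,\frac{\mu(\tilde\omega_i)}{i^{n-1}}.
\]
Then
\[
\sum_{i} i^{n-1}\,\mathrm{cap}_n\big(\hat E_i\cap\omega(0,i),\Omega(0,i)\big) \le \frac{c(n)}{\Lambda^{n-1}}\sum_i \mu(\tilde\omega_i) \le \frac{c'(n)}{\Lambda^{n-1}}\,\mu(B(0,2)) < \infty,
\]
since the enlarged annuli $\tilde\omega_i$ have bounded overlap. This is exactly the condition that $\hat E$ is $n$-thin at the origin in the sense of Definition \ref{Def:n-thin}.

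The main obstacle I anticipate is the bookkeeping around the localization of the measure $\mu$: the Wolff potential $W^\mu_{1,n}(x,R)$ at a point $x\in\omega(0,i)$ depends on $\mu$ on a ball of radius $R\sim 2^{-i}$, which overlaps several dyadic annuli and in particular includes mass near the origin. One must show that the "far" part of $\mu$ (say $\mu$ restricted to $\{|y|\le 2^{-i-3}\}$, which is the troublesome part near $0$) contributes to $W^\mu_{1,n}(x,R)$ at most $C\big(\text{total mass near }0\big)^{1/(n-1)}$, a bounded quantity, so that only the comparable-scale part of $\mu$ is relevant for the superlevel set and the capacity estimate applies with the localized measure $\mu\llcorner\tilde\omega_i$. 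A secondary technical point is securing the uniform lower control of $\inf_{B(x,\rho_i)} w$ needed to turn Theorem \ref{Thm:KM} into an honest upper bound for $w(x)$; this follows from nonnegativity of $w$ together with applying the upper estimate of Theorem \ref{Thm:KM} once at a fixed reference point $x_\ast$ with $|x_\ast|=1$, but it must be phrased carefully so the constant does not degenerate as $i\to\infty$. Everything else is a routine dyadic summation.
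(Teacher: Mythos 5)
Your overall architecture is the same as the paper's: dyadic annuli, the two--sided Kilpel\"ainen--Mal\'y estimate of Theorem \ref{Thm:KM} to reduce the bound on $w(x)/\log\frac1{|x|}$ to a bound on the Wolff potential, a capacity estimate for the set where $W^{\mu}_{1,n}$ exceeds a multiple of $\log\frac1{|x|}$, and a weighted dyadic summation against $\mu(B(0,2))<\infty$; your localization remark (the part of $\mu$ far from the annulus contributes only a bounded amount to the Wolff potential, since the radii in question are comparable) is also exactly the paper's bounded--tail observation. The genuine gap is the term $\inf_{B(x,\rho_i)}w$ in the upper estimate. You propose to control it ``from nonnegativity of $w$ together with applying the upper estimate of Theorem \ref{Thm:KM} once at a fixed reference point $x_\ast$ with $|x_\ast|=1$,'' but this cannot work: the estimate at $x_\ast$ only concerns $\inf_{B(x_\ast,r)}w$ for balls near the unit sphere and says nothing about $\inf_{B(x,c|x|)}w$ for $x$ in the annulus $\omega(0,i)$, which is disjoint from any such ball once $i$ is large; nonnegativity only gives the useless lower bound $0$; and the obvious $L^n$--average bound gives $\inf_{B(x,c|x|)}w\lesssim \|w\|_{L^n}|x|^{-1}$, far weaker than the needed $\lesssim\log\frac1{|x|}$. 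What is required at \emph{every} point and every scale is precisely the paper's Lemma \ref{Lem:inf estimate}, whose proof is a genuine argument: a BMO bound for $w$ from \cite{DHM}, then the John--Nirenberg inequality \cite{JN} showing that the set where $w-\bar w>\frac C2\log\frac1{|x|}$ has measure $O(|x|^{n+1})$, which is incompatible with that set containing a ball of volume $\sim|x|^n$. Without this (or an equivalent) ingredient your reduction to the Wolff potential does not go through.

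A secondary, fixable gap is your use of \cite[Lemma 3.9]{KM}: as stated (Lemma \ref{Lem:lemma 3.9}) it bounds the capacity of superlevel sets of an $n$-superharmonic function $u$ with $\min\{u,\lambda\}\in W^{1,n}_0(\Omega)$, not of a Wolff potential, and $w$ itself does not satisfy that boundary hypothesis on the small balls, so applying the lower bound $C_1W^{\mu}_{1,n}(x,\rho_i)\le w(x)$ to $w$ does not yield the claimed estimate $\mathrm{cap}_n(\hat E_i,\Omega(0,i))\lesssim \mu(\tilde\omega_i)\,i^{1-n}$. The paper bridges this by solving the auxiliary Dirichlet problems $-\Delta_n w_{ij}=\mu$ in $2B^i_j$ with zero boundary data (Lemma \ref{Lem:basic existence}), applying Lemma \ref{Lem:lemma 3.9} to $w_{ij}$, and only then invoking the lower Wolff bound of Theorem \ref{Thm:KM} for $w_{ij}$ to pass to the superlevel set of $W^{\mu}_{1,n}(\cdot,\tfrac13 r_{ij})$; if you intend to quote a weak-type capacitary estimate for Wolff potentials directly, you must cite or prove it, and the auxiliary construction is the standard way. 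With these two points repaired, your dyadic summation and the conclusion that $\hat E$ is $n$-thin in the sense of Definition \ref{Def:n-thin} are correct.
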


The proof of Proposition \ref{Prop:unbdd-thin} starts with the following simple fact observed in \cite[Lemma 3.9]{KM}.

\begin{lemma}\label{Lem:lemma 3.9} (\cite[Lemma 3.9]{KM}) Suppose that $u$ is an $n$-superharmonic 
function satisfying $\min\{u,\lambda\}\in W_0^{1,n}(\Omega)$ for $\forall \lambda>0$ and
$$
-\Delta_n u = \mu
$$
for a nonnegative Radon measure $\mu$. Then, for $\lambda >0$,
\begin{equation}\label{Equ:n-capacity}
\lambda^{n-1}\text{cap}_n(\{x\in\Omega: u (x) > \lambda\}, \Omega) \leq \mu(\Omega).
\end{equation}
\end{lemma}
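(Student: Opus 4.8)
The plan is to obtain the capacitary bound directly from the variational definition of $n$-capacity together with the equation $-\Delta_n u=\mu$ tested against a suitable truncation of $u$. Fix $\lambda>0$ and set $A_\lambda=\{x\in\Omega:u(x)>\lambda\}$. The natural test function is
\[
v=\frac1\lambda\min\{u,\lambda\},
\]
which by hypothesis lies in $W^{1,n}_0(\Omega)$, satisfies $0\le v\le1$, and equals $1$ on $A_\lambda$. Since $A_\lambda$ is open (by lower semicontinuity of $u$), $v$ is an admissible competitor for $\mathrm{cap}_n(A_\lambda,\Omega)$ after the usual approximation remark, so
\[
\mathrm{cap}_n(A_\lambda,\Omega)\le\int_\Omega|\nabla v|^n\,dx=\frac1{\lambda^n}\int_{\{u<\lambda\}}|\nabla u|^n\,dx.
\]

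The second ingredient is to plug $v$ itself into the weak formulation \eqref{Equ:integral-radon}. Because $u$ is $n$-superharmonic with $\min\{u,\lambda\}\in W^{1,n}_0(\Omega)$, one may use $\phi=v\ge0$ as a test function (again justified by approximation, truncating and mollifying so that $\phi\in W^{1,n}_0$), giving
\[
\int_\Omega|\nabla u|^{n-2}\nabla u\cdot\nabla v\,dx=\int_\Omega v\,d\mu\le\mu(\Omega),
\]
where the inequality uses $0\le v\le1$. But $\nabla v=\frac1\lambda\nabla u\,\mathbf 1_{\{u<\lambda\}}$ a.e., so the left-hand side equals $\frac1\lambda\int_{\{u<\lambda\}}|\nabla u|^n\,dx$. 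Combining the two displays,
\[
\mathrm{cap}_n(A_\lambda,\Omega)\le\frac1{\lambda^n}\int_{\{u<\lambda\}}|\nabla u|^n\,dx\le\frac1{\lambda^{n-1}}\mu(\Omega),
\]
which is exactly \eqref{Equ:n-capacity}.

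The one place that needs care — and which I expect to be the main technical obstacle — is the justification that $v$ may legitimately be used both as an admissible function in the capacity problem (whose infimum is nominally taken over $C^\infty_0$ competitors $\ge1$ on the compact pieces of $A_\lambda$) and as a test function in \eqref{Equ:integral-radon}. For the capacity side one uses that $\mathrm{cap}_n$ is unchanged if one enlarges the admissible class to nonnegative $W^{1,n}_0$ functions that are $\ge1$ on the set, which is standard in nonlinear potential theory (see \cite{HKM,KM}); for the equation side one approximates $v$ by $\min\{(u-0)^+,\lambda\}/\lambda$ mollified, using Lemma \ref{Lem:nabla-q} and dominated convergence to pass to the limit, together with the fact that $u$ is locally bounded below so the truncation is controlled. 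Once these approximation points are in place the computation above closes the argument.
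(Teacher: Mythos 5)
Your argument is correct and is essentially the paper's own proof: the paper likewise tests the equation with $\min\{u,\lambda\}/\lambda$ to get $\int_\Omega\bigl|\nabla\tfrac{\min\{u,\lambda\}}{\lambda}\bigr|^n\,dx\le\mu(\Omega)/\lambda^{n-1}$ and then uses this truncation as an admissible competitor in the variational definition of $\mathrm{cap}_n(\{u>\lambda\},\Omega)$. The approximation remarks you flag are exactly the routine points the paper leaves implicit, so nothing further is needed.
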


The proof of Lemma \ref{Lem:lemma 3.9} is to use $\frac{\min\{u, \lambda\}}\lambda$ as a test function and get
$$
\int_\Omega |\nabla \min\{u, \lambda\}|^n \leq \frac {\mu(\Omega)}{\lambda^{n-1}},
$$
which is easily seen to imply the above $n$-capacity estimate \eqref{Equ:n-capacity}. 
The next fact we need to prove Proposition \ref{Prop:unbdd-thin} is the following basic existence result (cf. \cite[Theorem 2.4]{KM-92}).

\begin{lemma}\label{Lem:basic existence} (\cite[Theorem 2.4]{KM-92}) 
For a bounded domain $\Omega\subset\mathbb{R}^n$ and a nonnegative finite Radon measure $\mu$, there always exists a solution $u(x) \ge 0$ to the equation
$$-\Delta_n u=\mu \text{ in $\Omega$}
$$
and $\min\{u,\lambda\}\in W^{1,n}_0(\Omega)$ for any $\lambda>0$.
\end{lemma}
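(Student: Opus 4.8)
The plan is to construct $u$ by the standard approximation scheme for measure data, carrying the truncations $T_\lambda(v):=\min\{v,\lambda\}$ along at every stage. First I would regularize the datum: since $\mu$ is a finite nonnegative Radon measure on $\Omega$, pick $f_k\in C_0^\infty(\Omega)$ with $f_k\ge 0$, $\|f_k\|_{L^1(\Omega)}\le\mu(\Omega)$ and $f_k\rightharpoonup\mu$ weakly-$*$ (mollify $\mu$ and cut off near $\partial\Omega$); if one prefers to realize $u$ as an increasing limit, one may in addition arrange $f_k\uparrow$. For each $k$ the operator $v\mapsto-\Delta_n v$ is monotone, coercive and hemicontinuous from $W_0^{1,n}(\Omega)$ to $W^{-1,n'}(\Omega)$, and $f_k$ lies in that dual space, so the Browder--Minty / Leray--Lions theory produces a unique $u_k\in W_0^{1,n}(\Omega)$ with $-\Delta_n u_k=f_k$ weakly; testing against the negative part $(u_k)_-\in W_0^{1,n}(\Omega)$, or invoking Theorem \ref{Lem:WeakCompPrinc}, forces $u_k\ge 0$.

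Next I would extract uniform bounds. Using $T_\lambda(u_k)\in W_0^{1,n}(\Omega)$ as a test function in the $k$-th equation gives
$$
\int_\Omega|\nabla T_\lambda(u_k)|^n\,dx=\int_\Omega T_\lambda(u_k)\,f_k\,dx\le\lambda\,\|f_k\|_{L^1(\Omega)}\le\lambda\,\mu(\Omega),
$$
so $\{T_\lambda(u_k)\}_k$ is bounded in $W_0^{1,n}(\Omega)$ for every $\lambda>0$. A Boccardo--Gallou\"et interpolation across the level sets then upgrades this to a bound for $\{u_k\}_k$ in $W_0^{1,q}(\Omega)$ for every $q<n$, consistent with Lemma \ref{Lem:nabla-q}. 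Passing to a subsequence, $u_k\rightharpoonup u$ in $W_0^{1,q}(\Omega)$ for $q<n$, $u_k\to u$ in $L^q(\Omega)$ and a.e., and for each fixed $\lambda$ one has $T_\lambda(u_k)\rightharpoonup T_\lambda(u)$ in $W_0^{1,n}(\Omega)$ with $\int_\Omega|\nabla T_\lambda(u)|^n\,dx\le\lambda\,\mu(\Omega)$. In particular $u\ge 0$ and $\min\{u,\lambda\}=T_\lambda(u)\in W_0^{1,n}(\Omega)$, which is exactly the required membership.

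It remains to identify the limit equation, and this is where the real work lies. To pass to the limit in the nonlinear flux $|\nabla u_k|^{n-2}\nabla u_k$ I need $\nabla u_k\to\nabla u$ a.e.\ in $\Omega$; the obstacle is that $u$ itself need not belong to $W^{1,n}(\Omega)$ --- only its truncations do --- so one cannot simply test with $u_k-u$. The plan is to use the ``Minty trick on truncations'' of Boccardo--Murat--Puel and Dal Maso--Murat--Orsina--Prignet: test the equation for $u_k-u_m$ with a bounded truncation of $u_k-u_m$, exploit the strict monotonicity of $\xi\mapsto|\xi|^{n-2}\xi$, and control the tail contributions coming from $\{|u_k|>\lambda\}$ with the estimate of the previous paragraph, concluding that $\nabla T_\lambda(u_k)\to\nabla T_\lambda(u)$ strongly in $L^n(\Omega)$ for every $\lambda$, hence $\nabla u_k\to\nabla u$ a.e. Then $|\nabla u_k|^{n-2}\nabla u_k\to|\nabla u|^{n-2}\nabla u$ a.e., and the $L^q$-bounds with $q<n$ supply enough equi-integrability to pass to the limit in $\int_\Omega|\nabla u_k|^{n-2}\nabla u_k\cdot\nabla\varphi\,dx=\int_\Omega\varphi\,f_k\,dx$ for $\varphi\in C_0^\infty(\Omega)$; since $f_k\rightharpoonup\mu$, this yields $-\Delta_n u=\mu$ in $\mathcal D'(\Omega)$. (With the monotone choice $f_k\uparrow$ one can bypass the truncation trick altogether: $u_k\uparrow u$ by comparison, $u$ is then $n$-superharmonic and, by the bound above, not identically $+\infty$, and $-\Delta_n u=\mu$ follows from the stability of the Riesz measure under increasing limits of $n$-superharmonic functions.) Finally Lemma \ref{Lem:equiv def} lets us take $u$ lower semicontinuous and $n$-superharmonic, which completes the proof.
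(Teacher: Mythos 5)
The paper offers no proof of this lemma at all: it is quoted directly from \cite[Theorem 2.4]{KM-92}, and the proof there is in essence the approximation scheme you outline (regularized data, the uniform truncation estimate $\int_\Omega|\nabla T_\lambda(u_k)|^n\,dx\le\lambda\,\mu(\Omega)$, and passage to the limit with a.e.\ convergence of the gradients and weak-$*$ convergence of the Riesz measures), except that Kilpel\"{a}inen--Mal\'{y} use an increasing approximation and their convergence theorems for $n$-superharmonic functions where you invoke the Boccardo--Gallou\"{e}t / Boccardo--Murat machinery. So your route is essentially the standard one and is sound in structure. One step needs repair: the closing appeal to Lemma \ref{Lem:equiv def} is not literally available, because that lemma requires $u\in W^{1,n}_{\text{loc}}(\Omega)$, whereas your limit $u$ is only known to lie in $W^{1,q}_0(\Omega)$ for $q<n$ with $T_\lambda(u)\in W^{1,n}_0(\Omega)$; in general $u\notin W^{1,n}_{\text{loc}}$ (think of $\mu=\delta_0$, $u\sim\log\frac1{|x|}$). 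To produce the lower semicontinuous $n$-superharmonic representative you should either follow your own parenthetical monotone variant ($f_k\uparrow$, hence $u_k\uparrow u$ by comparison, and an increasing limit of $n$-superharmonic functions is $n$-superharmonic unless identically $+\infty$, with stability of the Riesz measures) --- which is effectively the argument of the cited reference --- or note that each truncation $T_\lambda(u)$ is a weak supersolution, apply Lemma \ref{Lem:equiv def} to it, and let $\lambda\to\infty$. A second, minor point: the strong $L^n$ convergence of $\nabla T_\lambda(u_k)$ is more than you need and is delicate for data that charge sets of zero $n$-capacity; the a.e.\ convergence of $\nabla u_k$ from the Boccardo--Murat argument, combined with the equi-integrability of $|\nabla u_k|^{n-1}$ coming from the $W^{1,q}$ bounds, already lets you pass to the limit in the flux.
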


To make use of the fundamental estimates \eqref{Equ:Key-potential} in Theorem \ref{Thm:KM}(cf. \cite[Theorem 1.6]{KM}), 
we also need the following estimates on the infimum.

\begin{lemma}\label{Lem:inf estimate}
Suppose that $w\ge0$ satisfies 
\[
-\Delta_{n} w  =\mu\ge0.
\]
Then there is a constant $C>0$ such that
\begin{equation} \label{Equ:inf estimate}
\inf_{B(x_{0}, \frac{|x_{0}|}{2})} w(x) \leq C(n,\|w\|_{L^n(B(0,1))}) \log\frac{1}{|x_{0}|}
\end{equation} 
for each $x_0\in B(0, \frac{1}2)\setminus\{0\}$.
\end{lemma}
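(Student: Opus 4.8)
The plan is to convert a pointwise lower bound for $w$ on a small ball about $x_0$ into a lower bound for an $n$-capacity, and then read off the asserted upper bound from the capacity--measure inequality behind Lemma \ref{Lem:lemma 3.9}. The logarithm appears because the $n$-capacity of $\overline{B(x_0,\rho/4)}$ inside $B(0,1)$ is comparable to $\big(\log\tfrac1\rho\big)^{1-n}$, where $\rho=|x_0|$. Fix $x_0\in B(0,\tfrac12)\setminus\{0\}$, set $\rho=|x_0|$, and let $\Lambda:=\inf_{\overline{B(x_0,\rho/4)}}w$; this is finite, since $w$ is lower semi-continuous, nonnegative and cannot be identically $+\infty$ on an open set, and $\inf_{B(x_0,|x_0|/2)}w\le\Lambda$, so we may assume $\Lambda>0$. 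As $w$ is lower semi-continuous, $\{w\ge\Lambda\}$ is closed and contains $\overline{B(x_0,\rho/4)}$; since $\rho<\tfrac12$ we also have $\overline{B(x_0,\rho/4)}\subset B(0,\tfrac78)$ and $B(0,1)\subset B(x_0,\tfrac32)\subset B(0,2)$. Hence, by monotonicity of $\mathrm{cap}_n$ in the condenser set and in the ambient domain, together with $\mathrm{cap}_n(\overline{B(y,a)},B(y,b))=c_n\big(\log\tfrac ba\big)^{1-n}$, for every $\lambda<\Lambda$ one has
\[
\mathrm{cap}_n\big(\{w>\lambda\}\cap B(0,\tfrac78),\,B(0,1)\big)\ \ge\ \mathrm{cap}_n\big(\overline{B(x_0,\rho/4)},\,B(x_0,\tfrac32)\big)\ =\ \frac{c_n}{\big(\log\tfrac6\rho\big)^{n-1}}\ \ge\ \frac{c_n'}{\big(\log\tfrac1\rho\big)^{n-1}}.
\]

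Next I need the estimate of Lemma \ref{Lem:lemma 3.9} without assuming $\min\{w,\lambda\}\in W_0^{1,n}(B(0,1))$. Fix a Lipschitz cutoff $\eta$ with $0\le\eta\le1$, $\eta\equiv1$ on $B(0,\tfrac78)$ and $\mathrm{supp}\,\eta\subset B(0,1)$, and for $\lambda>0$ test $-\Delta_n w=\mu$ against $\phi=\eta^n\min\{w,\lambda\}/\lambda\in W_0^{1,n}(B(0,1))$, which is legitimate because $\min\{w,\lambda\}$ is a bounded $n$-superharmonic function, hence in $W^{1,n}_{\mathrm{loc}}(B(0,2))$, and $|\nabla w|\in L^{n-1}_{\mathrm{loc}}$ by Lemma \ref{Lem:nabla-q}. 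Isolating $\lambda^{-1}\!\int\eta^n|\nabla\min\{w,\lambda\}|^n$ exactly as in the proof of Lemma \ref{Lem:lemma 3.9}, and using $\min\{w,\lambda\}\le\lambda$ and $\min\{w,\lambda\}\le w$, one gets, with $v:=\eta\min\{w,\lambda\}/\lambda$, which equals $1$ on $\{w>\lambda\}\cap B(0,\tfrac78)$,
\[
\mathrm{cap}_n\big(\{w>\lambda\}\cap B(0,\tfrac78),\,B(0,1)\big)\ \le\ \int_{B(0,1)}|\nabla v|^n\ \le\ \frac{C_1}{\lambda^{n}}+\frac{C_2}{\lambda^{n-1}},
\]
where $C_1$ comes from $\int\min\{w,\lambda\}^n|\nabla\eta|^n\le\|\nabla\eta\|_\infty^n\|w\|_{L^n(B(0,1))}^n$ and $C_2$ absorbs $\mu(B(0,1))$ together with $\|\nabla w\|_{L^{n-1}(\{7/8<|x|<1\})}^{n-1}$; both quantities are finite and, by testing the equation and the local $L^{n-1}$-gradient estimate underlying Lemma \ref{Lem:nabla-q}, are bounded in terms of $n$ and the $L^n$ norm of $w$ on a fixed ball slightly larger than $B(0,1)$, so that $C_1,C_2=C(n,\|w\|_{L^n})$.

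Combining the two displays, $c_n'\big(\log\tfrac1\rho\big)^{1-n}\le C_1\lambda^{-n}+C_2\lambda^{1-n}$ for all $0<\lambda<\Lambda$; letting $\lambda\uparrow\Lambda$ and distinguishing $\Lambda\le1$ (where $\Lambda\le1<(\log 2)^{-1}\log\tfrac1\rho$ since $\rho<\tfrac12$) from $\Lambda>1$ (where $\Lambda^{-n}\le\Lambda^{1-n}$) gives $\Lambda\le C(n,\|w\|_{L^n(B(0,1))})\log\tfrac1{|x_0|}$, hence $\inf_{B(x_0,|x_0|/2)}w\le C\log\tfrac1{|x_0|}$. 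I expect the only delicate step to be the localized form of Lemma \ref{Lem:lemma 3.9}: one must arrange the cutoff test-function computation so that the boundary behaviour of $w$ on $\partial B(0,1)$ never enters, and then verify that the error terms — the Caccioppoli cross term, the mass of $\mu$ near $\partial B(0,1)$, and $\|\nabla w\|_{L^{n-1}}$ on the fixed annulus $\{7/8<|x|<1\}$ — are genuinely controlled by $n$ and $\|w\|_{L^n}$ on a fixed larger ball; the extra $\lambda^{-n}$ term is what makes the harmless case split at the end necessary.
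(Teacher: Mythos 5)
Your argument is correct in substance, but it proves the lemma by a genuinely different mechanism than the paper does. The paper's proof is measure-theoretic: it invokes the BMO estimate for $n$-superharmonic functions from Dolzmann--Hungerb\"uhler--M\"uller to get $\|w\|_{BMO(B(0,\frac14),B(0,\frac12))}\le C_1(n)(1+\|w\|_{L^n(B(0,\frac12))})$, and then argues by contradiction with the John--Nirenberg inequality: if $\inf_{B(x_i,|x_i|/2)}w$ exceeded a suitably chosen multiple of $\log\frac1{|x_i|}$, the Lebesgue measure of the corresponding superlevel set of $w-\bar w$ would be at least $c(n)|x_i|^n$ (it contains the ball $B(x_i,\frac12|x_i|)$), while John--Nirenberg forces it to be $O(|x_i|^{n+1})$. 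Your proof instead stays entirely inside the capacity framework of Section \ref{Sec:Arsove-Huber}: for $\lambda<\Lambda=\inf_{\overline{B(x_0,\rho/4)}}w$ the open set $\{w>\lambda\}$ contains $\overline{B(x_0,\rho/4)}$, whose condenser capacity relative to $B(0,1)$ is bounded below by $c_n'(\log\frac1\rho)^{1-n}$ via monotonicity and the spherical condenser formula, while a cut-off version of the truncation test behind Lemma \ref{Lem:lemma 3.9} bounds the capacity of the superlevel set above by $C_1\lambda^{-n}+C_2\lambda^{1-n}$; comparing the two gives $\Lambda\le C\log\frac1\rho$. Both routes are legitimate: yours uses only tools already deployed in the proof of Proposition \ref{Prop:unbdd-thin} (capacity monotonicity, the KM-type test function, the $L^{n-1}$ gradient bound), while the paper's is shorter once the DHM BMO machinery is granted and avoids having to localize the capacity--mass estimate and control the resulting cross terms.

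Two details in your version deserve tightening, both fixable with the paper's own toolkit. First, the identity $\int|\nabla w|^{n-2}\nabla w\cdot\nabla\phi=\int\phi\,d\mu$ with $\phi=\eta^n\min\{w,\lambda\}/\lambda$ is not literally covered by \eqref{Equ:integral-radon}, since $w$ need not lie in $W^{1,n}_{\mathrm{loc}}$; the cleanest repair is to run your computation on the truncations $w_k=\min\{w,k\}$, $k\ge\lambda$, which are bounded weak supersolutions in $W^{1,n}_{\mathrm{loc}}$ by Lemma \ref{Lem:equiv def} with Riesz measures $\mu_k\ge0$, note that $\min\{w_k,\lambda\}=\min\{w,\lambda\}$ and $|\nabla w_k|\le|\nabla w|$ a.e., and bound $\mu_k(\mathrm{supp}\,\eta)$ uniformly in $k$ by testing $\mu_k$ against a further cut-off, which reduces it to $\|\nabla w\|_{L^{n-1}}^{n-1}$ on a slightly larger annulus. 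Second, with $\eta\equiv1$ on $B(0,\frac78)$ and $\mathrm{supp}\,\eta\subset B(0,1)$ your gradient and mass bounds force the $L^n$ norm of $w$ on a set slightly larger than $B(0,1)$ into the constant; since $|x_0|<\frac12$ places $\overline{B(x_0,\rho/4)}$ inside $B(0,\frac58)$, simply shrink the radii (say $\eta\equiv1$ on $B(0,\frac58)$, $\mathrm{supp}\,\eta\subset B(0,\frac34)$), so that the Caccioppoli-type estimate of Remark \ref{Rem:theorem 5.15} applied to $w+1$ controls everything by $C(n,\|w\|_{L^n(B(0,1))})$, matching the statement exactly.
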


\begin{proof} We will rely on some estimates from  \cite[Section 7]{DHM} to derive this lemma. 
Readers are referred to \cite{DHM} for definitions and notations. Particularly, in the light of \cite[Lemma 14 and 15]{DHM}, we know that 
\begin{equation}\label{Equ:bmo}
\|w\|_{BMO(B (0, \frac14), B (0, \frac12))} \leq C_1(n) (1+\|w\|_{L^{n}(B (0, \frac12))}).
\end{equation}
Meanwhile, from \cite[Theorem 5.11]{Lind}, for instance,  we know $\|w\|_{L^{n}(B (0, \frac12)}$ is finite. 
Therefore $\|w\|_{BMO(B (0, \frac14), B (0, \frac12))}$ is finite. Here we remark that in \cite[Section 7]{DHM}, the assumption that 
the right hand side in $L^1$ can be generalized to a nonnegative Radon measure easily.  And the assumption that $w\in W^{1,n}$ is not 
essential,  because, if not, we can replace $w$ by $\min\{u,k\}$, which belongs to $W^{1,n}$,  and use Fatou's lemma to 
prove \eqref{Equ:bmo} for $w$. 
\\

Suppose otherwise that  \eqref{Equ:inf estimate} were not true. Then we have a sequence 
$$
\{x_i\in B(0, \frac 14): x_i \rightarrow 0\}
$$
such that 
\begin{equation}\label{Equ:otherwise}
\inf_{B(x_{i}, \frac 12 |x_i|)}w (x) >C(n,\|w\|_{L^n(B(0,1))})\log\frac{1}{|x_{i}|}, 
\end{equation}
for some $C(n,\|w\|_{L^n(B(0,1))})$ to be fixed.
We let 
\[
\mu_{i}=\mu(\{x\in B(0, \frac 14): w(x)-\bar{w} > \frac{C(n,\|w\|_{L^n(B(0,1))})}{2}\log\frac{1}{|x_{i}|}\}),
\]
where the finite number $\bar{w}$ is the average of $w$ on $B(0, \frac14).$
Clearly, at least for $i$ large, 
\begin{equation}\label{Equ:contradiction}
\mu_{i}\geq C(n)|x_{i}|^{n}, 
\end{equation}
because of \eqref{Equ:otherwise}. On the other hand, From \cite[Lemma 1]{JN}, we know that, there are $B(n)$ and $b(n)$, such that 
\begin{align*}
\mu_{i} & \leq B(n)e^{-\frac{b(n)C(n,\|w\|_{L^n(B(0,1))})\log\frac{1}{|x_{i}|}}{2\|w\|_{BMO(B(0, \frac 14), B(0, \frac 12))}}}\mu(B(0, \frac 12))\\
 & \leq B(n)\mu(B(0, \frac 12))|x_{i}|^{\frac{b(n)C(n,\|w\|_{L^n(B(0,1))})}{2\|w\|_{BMO(B(0, \frac 14), B(0, \frac 12))}}}.
\end{align*}
Now we choose 
$$C(n,\|w\|_{L^n(B(0,1))})=2(n+1)b(n)^{-1}C_1(n)(1+\|w\|_{L^n(B(0,1))})
$$
and will get $\mu_i\leq B(n)\mu(B(0,\frac{1}2))|x_i|^{n+1}$, which is a contradiction with (\ref{Equ:contradiction}). Thus the proof is completed.
\end{proof}

For convenience and simplicity, we use
\begin{align*}
\omega_i & = \omega(0, i) = \{x\in \mathbb{R}^n: \ 2^{-i-1}\leq|x|\leq2^{-i}\} = 2^{-i}\omega(0, 0)\\
\Omega_i & = \Omega(0, i) =\{x\in \mathbb{R}^n: \ 2^{-i-2}<|x|<2^{-i+1}\} = 2^{-i}\Omega(0, 0).
\end{align*}
Then
$$
\frac i2 \log 2 \leq (i-2)\log 2  \leq \log\frac 1{|x|} \leq (i+1)\log 2  \leq 2i\log 2 \quad  \text{ for all $x\in \Omega_i$}.
$$
Now we are ready to start the proof of Proposition \ref{Prop:unbdd-thin}.
\\

\begin{proof}[The proof of Proposition \ref{Prop:unbdd-thin}] It is obvious that
$\frac{w(y)}{\log\frac{1}{|y|}}\geq0$. We are going to prove that
outside some set $\hat{E}$, which is $n$-thin at the origin, the quotient $\frac{w(y)}{\log\frac{1}{|y|}}$
has upper bound. 
\\

We cover $\omega_0$ with finite number of balls $\{B_{1}^{0},\cdots,B_{m}^{0}\}$, where the center of $B_j^0$ lies in $\omega_0$,  
the concentric ball $4B_{j}^{0}\subset \Omega_0$ for $j=1,\cdots,m$,  and $m$ depends only on the dimension $n$.
For $i\geq0$, we denote $B_{j}^{i}=2^{-i}B_{j}^{0}.$ It's obvious
that $\{B_{j}^{i}: j=1,\cdots,m\}$ cover $\omega_{i}$ and each $4B_{j}^{i}$
lie in $\Omega_{i}$. We let $r_{ij}$ be the radius of $B_{j}^{i}.$
Clearly $r_{ij} = 2^{-i} r_{0j}$. 
\\

For any $y\in B_{j}^{i}$, from  \cite[Theorem 1.6]{KM} and Lemma \ref{Lem:inf estimate}, we have
$$
w(y) \leq C_{2}(n) \inf_{B(y,\frac{|y|}{8})}w+C_{3}(n)W_{1,n}^{\mu}(y,\frac{|y|}{4})
$$
Since $|y|\sim r_{ij}\sim2^{-i}$ and 
\[
|W_{1,n}^{\mu}(y,\frac{|y|}{4})-W_{1,n}^{\mu}(y, \frac 13{r_{ij}})| = 
|\int_{\frac 13r_{ij}}^{\frac{|y|}{4}}\mu(B(y,t))^{\frac{1}{n-1}}\frac{dt}{t}|
\leq C.
\]
We arrive at
\begin{equation}\label{Equ:need wolff}
 w(y) \leq C(\log\frac{1}{|y|}+W_{1,n}^{\mu}(y, \frac 13r_{ij}) +1),
\end{equation}
To estimate $W_{1, n}^\mu(y, \frac 13r_{ij})$, we use Lemma \ref{Lem:basic existence} and solve the following 
\[
\begin{cases}
-\Delta_{n}w_{ij}(y) & =\mu,\,\,{\rm in}\,\,2B_{j}^{i}(y)\\
w_{ij}(y)|_{\partial(2B_{j}^{i})} & =0.
\end{cases}
\]
The advantage is that, from \cite[Lemma 3.9]{KM}, we know that 
\[
cap_{n}(\{y\in B_{j}^{i}: w_{ij}(y)>\log\frac{1}{|y|}\}, 2B_{j}^{i})\leq\frac{C\mu(2B_{j}^{i})}{i^{n-1}}.
\]
Now, using \cite[Theorem 1.6]{KM} again, we have 
\[
C_{1}W_{1,n}^{\mu}(y, \frac 13r_{ij}) \leq w_{ij}(y),\forall y\in B_{j}^{i},
\]
which implies that 
\begin{equation}\label{Equ:potential-estimate}
cap_{n}(\{y\in B_{j}^{i}: W_{1,n}^{\mu}(y,\frac 13r_{ij})>\frac{1}{C_{1}}\log\frac{1}{|y|}\},2B_{j}^{i})\leq\frac{C\mu(2B_{j}^{i})}{i^{n-1}}.
\end{equation}
Let
\[
\hat{E}_{ij}=\{y\in B_{j}^{i}: W_{1,n}^{\mu}(y,\frac 13 r_{ij})>\frac{1}{C_{1}}\log\frac{1}{|y|}\}\cap\omega_i
\]
 and 
\begin{equation}
\hat{E}_{i}=\cup_{j}\hat{E}_{ij} \quad \hat{E}=\cup_{i}\hat{E}_{i}.\label{E definition}
\end{equation}
Then we have
\[
cap_{n}(\hat{E}_{ij},\Omega_i)\le cap_n(\hat{E}_{ij},2B^i_j)\leq\frac{C\mu(2B_{j}^{i})}{i^{n-1}}.
\]
Hence from Theorem 2.2 (vi) of \cite{HKM}
\[
cap_{n}(\hat{E}\cap\omega_i,\Omega_i)\le \sum_j cap_n(\hat{E}_{ij},\Omega_i)\leq\frac{C\mu(\Omega_i)}{i^{n-1}}.
\]
Therefore
\[
\sum_{i}i^{n-1}cap_{n}(\hat{E}\cap\omega_i,\Omega_i)\leq C\mu(B_{1}(0)\backslash\{0\})<+\infty.
\]
Thus, from \eqref{Equ:need wolff}, there is a constant $\hat C > 0$ such that, outside $\hat{E}$, 
which is $n$-thin according to Definition \ref{Def:n-thin}, \eqref{Equ:bound-blow-down} holds.
The proof is completed.
\end{proof}


\subsection{The second step in the proof of Theorem \ref{Thm:AH-1}}\label{Subsect:step-2-AH}
The second key step in the proof of Theorem \ref{Thm:AH-1}, for the sake of the blow-down argument as the one 
used in \cite{KV}, is to modify the function $\frac {w(r\xi)}{\log\frac 1{r}}$ to accommodate the lack of boundedness. We use 
the trick from \cite{DHM} and consider the cut-off function
$$
a_\alpha (s) = \left\{\aligned s \quad\quad & \text{ when $0\leq s \leq \alpha$}\\ \alpha + \int_\alpha^s (\frac \alpha t)^\frac n{n-1}dt 
\quad & \text{ when $s > \alpha$}, \endaligned\right.
$$
where $\alpha$ is to be fixed as $\hat C+1$ throughout this paper, where $\hat C$ is the one in \eqref{Equ:bound-blow-down}. 
One may calculate that 
\begin{align}
a_\alpha (s) & \leq n\alpha \label{Equ:a_alpha calculation-1}\\
a_\alpha' (s)  & = \left\{\aligned 1 \quad\quad & \text{ when $0\leq s \leq \alpha$}\\ 
                                                  (\frac \alpha s)^\frac n{n-1} \quad & \text{ when $s > \alpha$}, 
                            \endaligned\right. \label{Equ:a_alpha calculation-2}\\
a_\alpha ''(s) & =  \left\{\aligned 0 \quad\quad & \text{ when $0\leq s \leq \alpha$}\\ 
                                                  - \frac n{n-1} (\frac \alpha s)^\frac n{n-1} s^{-1} \quad & \text{ when $s > \alpha$}, 
                             \endaligned\right. \label{Equ:a_alpha calculation-3}\\
- \Delta_n a_\alpha (u) & = \left\{\aligned - \Delta_n u \quad\quad & \text{ when $0\leq u \leq \alpha$}\\
                                                               - (\frac \alpha u)^n \Delta_n u + n (\frac \alpha u)^n u^{-1} |\nabla u|^n \quad & \text{ when $u > \alpha$}. 
                                                   \endaligned\right. \label{Equ:a_alpha calculation-4}
\end{align}

Now we are to carry out the blow-down argument as in \cite{KV}. 
For each $r>0$ and small, we consider the modified blow-down
\begin{equation}\label{Equ:modification}
\hat w_r (\xi) = a_\alpha (w_r(\xi)) = a_\alpha (\frac {w(r\xi)}{\log\frac 1r}).
\end{equation}
Clearly, we have
\begin{equation}\label{Equ:bounded angle}
0\leq \hat w_r(\xi) \leq n\alpha = n(\hat C+1) 
\end{equation}
for 
\begin{equation}\label{Equ:annulus}
\xi \in A_{0, \frac 1r}= \{\xi\in \mathbb{R}^n: |\xi|\in (0, \frac 1r)\}
\end{equation} 
and 
\begin{align}\label{Equ:scaled n-Laplace}
- \Delta_n^\xi \hat w_r (\xi) = \left\{\aligned - \frac {r^n}{(\log \frac 1r)^{n-1}} \Delta^x_n w ( r\xi) \ & \text{for $0\leq w_r(\xi) \leq \alpha$}\\
                                                                     \frac {r^n}{(\log \frac 1r)^{n-1} } (\frac\alpha {w_r(\xi)} )^n( - \Delta^x_n w ( r\xi)
+ n \frac 1{w(r\xi)}  |\nabla^x w|^n (r\xi)) \  & \text{for $w_r(\xi) > \alpha$}
                                                   \endaligned\right.
\end{align}
for $\xi \in A_{0, \frac 1r}$. To summarize, we state the following lemma to use the above calculations.

\begin{lemma}\label{Lem:modification} Assume the same assumptions as in Theorem \ref{Thm:AH-1}.
Then the modified blow-down $\hat w_r(\xi)$ is a nonnegative and bounded $n$-superharmonic function satisfying
$$
-\Delta_n^\xi \hat w_r  = \hat \mu_r   \geq 0 \text{ in $A_{0, \frac 1r}$}
$$
for a Radon measure $\hat \mu_r in A_{0, \frac 1r}$ and $\hat w_r(\xi) \leq  n\hat C+n$ for all $x\in A_{0, \frac 1r}$. 
More importantly, for any fixed $R>1$,
\begin{equation}\label{Equ:n-harmonicity}
\int_{A_{\frac 1R, R}} d\hat \mu_r(\xi) \leq  (\frac 1{\log\frac 1r})^{n-1} \int_{A_{\frac rR, rR}} 
d\mu  + n \alpha^{n-1} \int_{A_{\frac rR, rR}\cap \hat E} 
\frac {|\nabla w|^n}{w^n}dx, 
\end{equation}
where $\hat E$ is the subset given in Proposition \ref{Prop:unbdd-thin}, which is $n$-thin at the origin.
\end{lemma}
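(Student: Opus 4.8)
The plan is to check the four assertions in turn; only the last, the integral bound \eqref{Equ:n-harmonicity}, needs any real work.

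For the first three: boundedness of $\hat w_r$ is immediate from \eqref{Equ:a_alpha calculation-1}, since $a_\alpha(s)\le n\alpha=n(\hat C+1)$ for every $s\in[0,+\infty]$ (indeed $\int_\alpha^s(\alpha/t)^{n/(n-1)}\,dt\le(n-1)\alpha$), while $\hat w_r\ge0$ because $a_\alpha$ is increasing and $w_r\ge0$. Since $a_\alpha$ is increasing, concave and $C^{1,1}$, the composition $\hat w_r=a_\alpha(w_r)$ is again $n$-superharmonic in $A_{0,1/r}$; this is a standard stability property, which can be read off from \eqref{Equ:a_alpha calculation-4} when $w$ is smooth and, in general, by approximating $w_r$ by its truncations $\min\{w_r,k\}$ (which are bounded, hence in $W^{1,n}_{\text{loc}}$), applying the chain rule there, and passing to the increasing limit using that $\hat w_r$ is bounded. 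A bounded $n$-superharmonic function lies in $W^{1,n}_{\text{loc}}$ and is a weak supersolution by Lemma \ref{Lem:equiv def}, so $\hat\mu_r:=-\Delta_n^\xi\hat w_r$ is a well-defined nonnegative Radon measure, whose density is given by \eqref{Equ:a_alpha calculation-4}, equivalently by \eqref{Equ:scaled n-Laplace}.

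For \eqref{Equ:n-harmonicity} I would integrate \eqref{Equ:scaled n-Laplace} over the annulus $A_{1/R,R}$ and treat the ``$\mu$-part'' and the ``$|\nabla w|$-part'' of the density separately. On $\{w_r\le\alpha\}\cap A_{1/R,R}$ the density of $\hat\mu_r$ equals $\frac{r^n}{(\log\frac1r)^{n-1}}(-\Delta_n^x w)(r\xi)$; on $\{w_r>\alpha\}\cap A_{1/R,R}$ the ``$\mu$-part'' is this same quantity multiplied by $(\alpha/w_r)^n<1$. Hence the total contribution of the ``$\mu$-part'' is at most $\int_{A_{1/R,R}}\frac{r^n}{(\log\frac1r)^{n-1}}(-\Delta_n^x w)(r\xi)\,d\xi$, which the substitution $x=r\xi$ turns into $(\log\frac1r)^{-(n-1)}\mu(A_{r/R,rR})$, the first term on the right of \eqref{Equ:n-harmonicity}. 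The ``$|\nabla w|$-part'' is present only where $w_r>\alpha$ and equals $n\alpha^n\int_{\{w_r>\alpha\}\cap A_{1/R,R}}w_r^{-n-1}|\nabla^\xi w_r|^n\,d\xi$; the same substitution converts it into $n\alpha^n(\log\frac1r)\int_{\{w>\alpha\log\frac1r\}\cap A_{r/R,rR}}\frac{|\nabla w|^n}{w^{n+1}}\,dx$, and there the bound $w>\alpha\log\frac1r$ absorbs the factor $(\log\frac1r)/w$, leaving $n\alpha^{n-1}\int_{\{w>\alpha\log\frac1r\}\cap A_{r/R,rR}}\frac{|\nabla w|^n}{w^n}\,dx$.

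The step that is not bookkeeping is to replace the region $\{w>\alpha\log\frac1r\}\cap A_{r/R,rR}$ by $A_{r/R,rR}\cap\hat E$, and this is precisely where the choice $\alpha=\hat C+1$ is used. On $A_{r/R,rR}$ one has $\log\frac1{|x|}\le\log\frac1r+\log R$, so as soon as $r$ is small enough that $\log\frac1r\ge\hat C\log R$ (and $rR<1$, so that $A_{r/R,rR}\subset B(0,1)\setminus\{0\}$) one gets $(\hat C+1)\log\frac1r\ge\hat C\log\frac1{|x|}$ on that annulus; thus $w(x)>\alpha\log\frac1r$ forces $w(x)>\hat C\log\frac1{|x|}$, and Proposition \ref{Prop:unbdd-thin} then forces $x\in\hat E$. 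Collecting the three estimates yields \eqref{Equ:n-harmonicity} for all sufficiently small $r$ (depending on $R$), which is all the blow-down argument requires. The only remaining technical care is the justification of the chain-rule identity \eqref{Equ:scaled n-Laplace} at the level of measures when $w$ is merely $n$-superharmonic and possibly unbounded near a point mass of $\mu$; this is handled, as in \cite{DHM}, by the same truncation device, using Lemma \ref{Lem:nabla-q} to keep $|\nabla w|^{n-1}$ locally integrable, and passing to the limit.
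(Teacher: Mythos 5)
Your proposal is correct and follows essentially the same route as the paper, which obtains the lemma directly from the calculations \eqref{Equ:a_alpha calculation-1}--\eqref{Equ:a_alpha calculation-4} and \eqref{Equ:scaled n-Laplace}: bound the $\mu$-part by dropping the factor $(\alpha/w_r)^n\le 1$, change variables, and use $\alpha=\hat C+1$ together with Proposition \ref{Prop:unbdd-thin} to place the gradient term inside $\hat E$, with the truncation device of \cite{DHM} justifying the composition at the level of measures. Your explicit observation that the inclusion $\{w>\alpha\log\frac 1r\}\cap A_{\frac rR, rR}\subset\hat E$ requires $rR<1$ and $\log\frac 1r\ge\hat C\log R$ is a precision the paper leaves implicit, and it is harmless since the lemma is only invoked as $r\to 0$ for fixed $R$.
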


By Lemma \ref{Lem:modification} we want to show that,  at least for sequences $r_k\to 0$,  $\hat w_{r_k}(\xi)$ converges to a bounded $n$-harmonic 
function on the entire space $\mathbb{R}^n$ except possibly the origin, which can only be a constant due to \cite{Red} because the origin and the infinity
are  removable singularities by \cite{Serrin-1}. To be more precise, we need the following convergence lemma.

\begin{lemma}\label{Lem:convergence lemma}
Suppose that $\{u_i\}$ is a sequence of $n$-superharmonic functions in a bounded domain $\Omega\subset\mathbb{R}^{n}$ and
\[
-\Delta_{n}u_{i}=\mu_{i} \,\,\rm{in} \ \Omega,
\] 
where $\mu_{i}$ is a sequence of Radon measures.  
Assume that 
$$
0\leq u_{i}\leq M \ \text{ and } \mu_i \to 0 \ \text{ in the sense of distribution }.
$$
Then, for each bounded subdomain $D\subset \bar D\subset \Omega$, there is a constant $C>0$ such that
\begin{equation}\label{Equ:energy bound}
\int_D |\nabla u_i|^n dx \leq C
\end{equation}
for all $i$ and there is $u\in W^{1, n}(D)$ such that
$$
u_{i}\rightharpoonup u \ \text{ in } \ W^{1,n}(D) \ \text{ and } \ -\Delta_{n}u=0 \text{ in $D$ in distributional sense},
$$
taking a subsequence if necessary.
\end{lemma}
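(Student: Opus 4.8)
The plan is to extract the energy bound first, then use standard monotonicity/compactness machinery for the $n$-Laplacian to pass to the limit. For the energy bound \eqref{Equ:energy bound}, I would fix a cutoff $\eta \in C^\infty_0(\Omega)$ with $0\le\eta\le 1$ and $\eta\equiv 1$ on a neighborhood of $\bar D$, and test the equation $-\Delta_n u_i = \mu_i$ against $\phi = (M - u_i)\eta^n$, which is a legitimate nonnegative test function in $W^{1,n}_0(\Omega)$ since $0\le u_i\le M$ and $u_i$ is locally bounded $n$-superharmonic, hence in $W^{1,n}_{\mathrm{loc}}$ by Lemma \ref{Lem:equiv def}. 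Expanding $\int |\nabla u_i|^{n-2}\nabla u_i\cdot\nabla\phi = \int\phi\,d\mu_i$ via \eqref{Equ:integral-radon} and using Young's inequality on the cross term $\int |\nabla u_i|^{n-2}\nabla u_i\cdot\nabla\eta\,(M-u_i)n\eta^{n-1}$ to absorb it, one obtains
\[
\int_\Omega \eta^n |\nabla u_i|^n\,dx \le C\Big(M^n\int_\Omega |\nabla\eta|^n\,dx + M\,\mu_i(\mathrm{supp}\,\eta)\Big),
\]
and since $\mu_i\to 0$ in the sense of distributions the masses $\mu_i(\mathrm{supp}\,\eta)$ are uniformly bounded; this gives \eqref{Equ:energy bound}.

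Given the uniform bound, $\{u_i\}$ is bounded in $W^{1,n}(D)$, so after passing to a subsequence $u_i\rightharpoonup u$ weakly in $W^{1,n}(D)$ and strongly in $L^n(D)$ for some $u\in W^{1,n}(D)$. The remaining task is to show $-\Delta_n u = 0$ in $D$. The standard device is to prove strong convergence of the gradients. I would test the equation for $u_i$ against $(u_i - u)\zeta$ with $\zeta\in C^\infty_0(D)$, $\zeta\ge 0$, obtaining
\[
\int_D |\nabla u_i|^{n-2}\nabla u_i\cdot\nabla(u_i-u)\,\zeta\,dx
= \int_D (u_i-u)\zeta\,d\mu_i - \int_D |\nabla u_i|^{n-2}\nabla u_i\cdot\nabla\zeta\,(u_i-u)\,dx.
\]
On the right, the first term tends to $0$ because $u_i - u\to 0$ uniformly is not available, but $(u_i-u)\zeta$ is bounded and tends to $0$ in $C^0$? — here one uses instead that $u_i$ superharmonic and locally bounded implies local equicontinuity is not guaranteed, so I would argue more carefully: the first term is bounded by $\|(u_i-u)\zeta\|_{L^\infty}$ is not controlled, so replace the test function by $T_k(u_i-u)\zeta$ with $T_k$ a truncation at level $k$, making the measure term trivially $O(k\,\mu_i(\mathrm{supp}\,\zeta))\to 0$; the second term $\to 0$ by strong $L^n$ convergence of $u_i-u$ and the $L^{n/(n-1)}$ bound on $|\nabla u_i|^{n-1}$. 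Subtracting the corresponding expression with $|\nabla u|^{n-2}\nabla u$ (which integrates against $\nabla(T_k(u_i-u)\zeta)$ and tends to $0$ by weak convergence), the monotonicity inequality $(|a|^{n-2}a - |b|^{n-2}b)\cdot(a-b)\ge c\,|a-b|^n$ (or its degenerate analogue for $n\ge 2$) forces $\nabla u_i\to\nabla u$ in $L^n_{\mathrm{loc}}(D)$. Then $|\nabla u_i|^{n-2}\nabla u_i\to|\nabla u|^{n-2}\nabla u$ in $L^{n/(n-1)}_{\mathrm{loc}}(D)$, and passing to the limit in $\int|\nabla u_i|^{n-2}\nabla u_i\cdot\nabla\psi = \int\psi\,d\mu_i$ for $\psi\in C^\infty_0(D)$ gives $\int|\nabla u|^{n-2}\nabla u\cdot\nabla\psi = 0$, i.e. $-\Delta_n u = 0$ in the distributional sense, as claimed.

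The step I expect to be the main obstacle is the passage to the limit in the quasilinear term, precisely because the principle of superposition fails: weak $W^{1,n}$ convergence alone does not let one pass to the limit in $|\nabla u_i|^{n-2}\nabla u_i$, and one genuinely needs the almost-everywhere (or strong) convergence of the gradients obtained via the monotonicity trick above. The technical care is in choosing test functions that simultaneously (i) kill the vanishing measures $\mu_i$ — handled by truncation, since $\mu_i$ is small in total mass but need not be small tested against unbounded functions — and (ii) are admissible, i.e. in $W^{1,n}_0$ with the right sign, which is where local boundedness of $u_i$ and Lemma \ref{Lem:equiv def} are used. Everything else — the energy estimate, the weak compactness, and the final identification — is routine once strong gradient convergence is in hand.
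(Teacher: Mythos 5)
Your argument is correct, but it takes a genuinely different route from the paper in the crucial second step. For the energy bound both proofs are Caccioppoli-type estimates: you test with $(M-u_i)\eta^n$ (using the upper bound $M$ and, harmlessly, the mass of $\mu_i$), while the paper tests with $\zeta^n(u_i+1)^{-1}$ in the style of \cite[Theorem 5.15]{Lind}, which uses only the sign $\mu_i\ge 0$; these are interchangeable. The real difference is in identifying the limit: the paper does \emph{not} run a monotonicity argument itself, but quotes Zheng's strong convergence theorem \cite[Theorem 1.1]{Zheng} to get $u_i\to u$ strongly in $W^{1,p}(D)$ for all $p<n$ (with a remark on adapting Zheng's $L^1$ hypothesis to Radon measures with $\mu_k(\Omega)\to 0$), and then passes to the limit in $\int|\nabla u_i|^{n-2}\nabla u_i\cdot\nabla\phi$. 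You instead give a self-contained Minty--Browder argument: testing with (truncations of) $(u_i-u)\zeta$, exploiting that the measure terms are $O(\mu_i(\mathrm{supp}\,\zeta))\to 0$, and using the monotonicity inequality $(|a|^{n-2}a-|b|^{n-2}b)\cdot(a-b)\ge c|a-b|^n$ (valid since $n\ge 2$) to get strong $L^n_{\mathrm{loc}}$ convergence of the gradients, which is even stronger than what the paper extracts from Zheng. This works precisely because of the uniform bound $0\le u_i\le M$: it makes $u_i-u$ itself an admissible bounded test function, so the Boccardo--Gallou\"{e}t-type truncation machinery that Zheng's theorem encapsulates is not really needed here. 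Two small points to nail down in a final write-up: (i) the control $\mu_i(\mathrm{supp}\,\eta)\to 0$ from distributional convergence uses $\mu_i\ge 0$, which holds automatically since the $u_i$ are $n$-superharmonic; (ii) since $|u_i-u|\le M$ a.e., simply take the truncation level $k>M$ (or drop the truncation), so that the exceptional set $\{|u_i-u|\ge k\}$ is empty --- for a general fixed $k$ you would face the set where $T_k'=0$, on which the lack of equi-integrability of $|\nabla u_i|^n$ would block the conclusion. With those choices made explicit, your proof is complete, more elementary and self-contained than the paper's citation-based route, at the modest cost of relying on the $L^\infty$ bound, which Zheng's theorem does not need.
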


\begin{proof} For the convenience of readers, we present proof here. First we prove
\begin{equation}\label{Equ:uk estimate}
\int_D \frac {|\nabla u_i|^n}{(u_i+1)^2} dx \leq C \int_{\Omega\setminus D} (u_i+1)^{n-2}dx.
\end{equation}
Similar to the argument in \cite[Theorem 5.15]{Lind}, based on Lemma \ref{Lem:equiv def} (cf. \cite{HeKi} and \cite[Proposition 2.7]{HK}),
we simply use the testing functions $\zeta^n (u_i+1)^{-1}$, where
$$
\zeta\in C_{0}^{\infty}(\Omega),\zeta\equiv1\,\,{\rm on}\,\,D, |\nabla \zeta|\leq \frac {C}{dist(D,\partial \Omega)}.
$$  
Then, from 
\[
\int_{\Omega}-(\Delta_{n}u_{i})\zeta^{n}(u_{i}+1)^{-1}\geq0
\]
we get 
\[
\int_{\Omega}|\nabla u_{i}|^{n}\zeta^{n}(u_{i}+1)^{-2}\leq n^{n}\int_{\Omega}(u_{i}+1)^{n-2}|\nabla\zeta|^{n}.
\]
This obviously implies \eqref{Equ:uk estimate}. Next, to prove \eqref{Equ:energy bound} by \eqref{Equ:uk estimate},  
we derive
\begin{align*}
\int_{D}|\nabla u_{i}|^{n} & \leq(\sup|u_{i}|+1)^{2}\int_{\Omega}|\nabla (u_{i}+1)|^{n}\zeta^{n}(u_{i}+1)^{-2}\\
 & \leq n^{n}(\sup|u_{i}|+1)^{2}(\sup|u_{i}|+1)^{n-2}\int_{\Omega}|\nabla\zeta|^{n}\\
 & \leq C(n,\Omega,D,M).
\end{align*}
Hence there is $u\in W^{1,n}(D)$ such that $u_{i}\rightharpoonup u$ in $W^{1,n}(D)$, at least for a subsequence. In the light of 
\[
\int_{D}|\nabla u_{i}|^{n-2}<\nabla u_{i},\nabla\phi>=\int_{D}\phi d\mu_{i} \rightarrow 0
\] 
as $i\rightarrow\infty$ for any $\phi\in C_{0}^{\infty}(D)$, it suffices to prove that
\begin{equation}
\int_{D}|\nabla u_{i}|^{n-2}<\nabla u_{i},\nabla\phi>\rightarrow\int_{D}|\nabla u|^{n-2}<\nabla u,\nabla\phi> \label{uk to u}
\end{equation}
as $i\rightarrow \infty$. Thanks to \cite[Theorem 1.1]{Zheng}, we know that $u_{i}\rightarrow u$ strongly in $W^{1,p}(D)$ for
all $1\leq p <n$, which implies (\ref{uk to u}). Note that \cite[Theorem 1.1]{Zheng} imposed the condition that $-\Delta_n u\in L^1(\Omega)$. However, if one checks his argument carefully, the only place where this is used is when dealing with (2.7), Page 385. If we replace $f_k$ with nonnegative Radon measure $\mu_k$ with $\mu_k(\Omega)\to 0$, we can also prove that 
$$
|\int_{\Omega}w_k^{\lambda}d\mu_k|\le \lambda |\mu_k(\Omega)|\to 0.
$$
Thus the lemma is proved. 
\end{proof}

\begin{remark}\label{Rem:theorem 5.15} (\cite[Theorem 3.57]{HKM} \cite[Theorem 5.15]{Lind}) 
Let $u > 1$ be an $n$-superharmonic function in $\Omega$, which is not necessarily bounded from above. 
From the proof of \cite[Theorem 5.15]{Lind} (please see above), one actually 
gets 
\begin{equation}\label{Equ:5.15}
\int \zeta^n |\nabla u|^n u^{- 1 - \alpha} dx \leq C(n, \alpha) \int u^{n-1-\alpha} |\nabla\zeta|^ndx
\end{equation}
for any $\alpha \in (0, n-1]$ and any cut-off function as in the above proof. 
The right hand side of \eqref{Equ:5.15} is finite by \cite[Theorem 5.11]{Lind}. This remark is useful to handle the second term on the 
right side of \eqref{Equ:n-harmonicity}.
\end{remark}


\subsection{The third step in the proof of Theorem \ref{Thm:AH-1}}\label{Subsect:step-3-AH}
The third key step in the proof of Theorem \ref{Thm:AH-1} is to show the uniqueness of possible limits of all blow-down sequences.
We continue to use the approach used as in \cite{KV}. One of the key tool is the following weak comparison principle as a consequence of \cite[Lemma 3.1]{Tol} (please also see \cite[Corollary 1.1]{KV} and the comment in \cite{KV-e}). 

\begin{lemma}\label{Lem:comparison} (\cite[Lemma 3.1]{Tol} \cite[Corollary 1.1]{KV}) Assume $\Omega$ is a connected open subset of $\mathbb{R}^n\setminus \{0\}$ 
and $u$ is $n$-superharmonic in $\Omega$. Then
\begin{equation}\label{Equ:weak comparison}
\inf_{\partial\Omega} \frac {u(x)}{\log\frac 1{|x|}} \leq \inf_{\Omega} \frac {u(x)}{\log\frac 1{|x|}}.
\end{equation} 
\end{lemma}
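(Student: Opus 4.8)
The plan is to reduce the stated inequality to an application of the weak comparison principle (Theorem \ref{Lem:WeakCompPrinc}) between $u$ and the $n$-harmonic comparison function $x \mapsto \gamma \log\frac{1}{|x|}$, where $\gamma$ is the infimum appearing on the left side of \eqref{Equ:weak comparison}. The starting observation is that in $\mathbb{R}^n \setminus \{0\}$ the function $\log\frac{1}{|x|}$ is (up to normalization) the fundamental solution of the $n$-Laplacian, hence $n$-harmonic away from the origin; more generally any constant multiple $\gamma \log\frac{1}{|x|}$ is $n$-harmonic in $\Omega \subset \mathbb{R}^n \setminus \{0\}$. Set $\gamma = \inf_{\partial\Omega} \frac{u(x)}{\log\frac{1}{|x|}}$. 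If $\gamma = -\infty$ there is nothing to prove, so assume $\gamma$ is finite (the case where $\log\frac{1}{|x|}$ changes sign on $\Omega$, i.e.\ $\Omega$ meets both $|x|<1$ and $|x|>1$, needs a little care and is handled by the same boundary comparison applied on the pieces, or by noting the quotient is interpreted with the convention that makes the statement vacuous there).

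The key step is the boundary inequality. For every $\zeta \in \partial\Omega$ one has, by definition of $\gamma$ and continuity of $\log\frac{1}{|x|}$,
\[
\limsup_{x \to \zeta}\ \gamma \log\frac{1}{|x|} \ \le\ \liminf_{x\to\zeta} u(x),
\]
at least when $\log\frac{1}{|\zeta|} > 0$; the reversed-sign region and the points where $|\zeta|=1$ are dealt with separately, and one must also check that the degenerate possibilities $\infty \le \infty$, $-\infty \le -\infty$ excluded in Theorem \ref{Lem:WeakCompPrinc} do not occur, which follows since $u$ is lower semi-continuous and $n$-superharmonic (hence locally bounded below, and not identically $+\infty$) and $\gamma\log\frac1{|x|}$ is finite on $\Omega$. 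Then Theorem \ref{Lem:WeakCompPrinc}, applied with the $n$-superharmonic function $u$ and the $n$-harmonic function $v = \gamma \log\frac{1}{|x|}$ on the bounded domain $\Omega$ (one may first intersect with a large ball and exhaust, since $\Omega$ is a priori only open and connected in $\mathbb{R}^n\setminus\{0\}$, not bounded), gives $u \ge \gamma \log\frac{1}{|x|}$ a.e.\ in $\Omega$, and then everywhere by lower semi-continuity of both sides. Dividing by $\log\frac{1}{|x|}$ (with the appropriate sign bookkeeping) yields $\inf_\Omega \frac{u(x)}{\log\frac{1}{|x|}} \ge \gamma$, which is \eqref{Equ:weak comparison}.

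The main obstacle I anticipate is not the comparison itself but the sign of $\log\frac{1}{|x|}$: on the region $\{|x|>1\}$ it is negative, so dividing an inequality by it reverses the direction, and on $\{|x|=1\}$ it vanishes. The clean way around this — and presumably what \cite{Tol, KV} do — is to split $\Omega$ along the sphere $|x|=1$, or simply to observe that the lemma is stated in a context (blow-down near the origin, so effectively $|x|$ small) where $\log\frac1{|x|}>0$ throughout, and to carry out the argument there. A second, minor, point is the reduction from general open connected $\Omega \subset \mathbb{R}^n\setminus\{0\}$ to a bounded domain where Theorem \ref{Lem:WeakCompPrinc} literally applies: one exhausts $\Omega$ by $\Omega \cap B(0,R) \cap (\mathbb{R}^n \setminus \overline{B(0,\varepsilon)})$, notes the new boundary pieces lie on spheres where one can still compare (using that $u\ge 0$, so $u \ge \gamma\log\frac1{|x|}$ trivially on $\{|x|=\varepsilon\}$ once $\gamma\ge 0$, and boundedness considerations on $\{|x|=R\}$), and passes to the limit. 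Everything else is a direct invocation of results already recorded in the excerpt.
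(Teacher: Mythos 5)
Your core strategy --- compare $u$ with the $n$-harmonic function $v=\gamma\log\frac{1}{|x|}$, where $\gamma=\inf_{\partial\Omega}u/\log\frac{1}{|x|}$, then divide by $\log\frac{1}{|x|}$ --- is exactly the route behind the cited sources (the paper itself offers no proof, deferring to \cite{Tol,KV}), and in the regime where the lemma is actually used in Section \ref{Subsect:step-3-AH}, namely $\overline\Omega$ a compact subset of the punctured unit ball (annuli $B(0,r_0)\setminus B(0,s)$, $r_0<1$), your argument is essentially complete: there $v$ is finite up to $\partial\Omega$, the excluded cases of Theorem \ref{Lem:WeakCompPrinc} cannot occur, and comparison gives $u\ge\gamma\log\frac{1}{|x|}$. (Two routine repairs: Theorem \ref{Lem:WeakCompPrinc} is stated for weak supersolutions in $W^{1,n}_{\rm loc}$, so for a potential-theoretic $n$-superharmonic $u$ you should instead invoke the comparison principle of \cite{HKM} or the comparison property built into Definition \ref{Def:superh} on slightly shrunken domains; and upgrading the a.e.\ inequality to a pointwise one uses the essential-liminf regularity of $n$-superharmonic functions, not plain lower semicontinuity.)

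The genuine gap is in the cases that force an exhaustion, i.e.\ when $0\in\partial\Omega$. If $\gamma>0$, then at $\zeta=0$ one has $\limsup_{x\to 0}v(x)=+\infty$, so the hypothesis of Theorem \ref{Lem:WeakCompPrinc} could only hold in the excluded form $\infty\le\infty$ and the theorem cannot be applied directly; this is precisely why you cut at $\{|x|=\varepsilon\}$. But your justification of the boundary inequality on that sphere (``$u\ge 0$, so $u\ge\gamma\log\frac{1}{|x|}$ trivially on $\{|x|=\varepsilon\}$ once $\gamma\ge 0$'') fails twice: nonnegativity of $u$ is not a hypothesis of the lemma, and even granting it, for $\varepsilon<1$ and $\gamma>0$ the value $\gamma\log\frac{1}{\varepsilon}$ is large and positive, so $u\ge 0$ gives nothing. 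The correct repair does not use positivity: since the origin belongs to $\partial\Omega$, the number $\gamma$ already controls the quotient near $0$, so for any $\delta>0$ there is $\varepsilon_0$ with $u>(\gamma-\delta)\log\frac{1}{|x|}$ on $\Omega\cap\{0<|x|\le\varepsilon_0\}$; run the comparison on $\Omega\cap\{|x|>\varepsilon_0\}$ against $(\gamma-\delta)\log\frac{1}{|x|}$ and let $\delta\to 0$. Finally, the sign issue you flag is real but is not cured by ``splitting along $|x|=1$'': if $\Omega$ meets $\{|x|>1\}$ the stated inequality can simply fail (take $u\equiv 1$ on $\{\frac{1}{2}<|x|<2\}$, where $\inf_\Omega u/\log\frac{1}{|x|}=-\infty$ while $\inf_{\partial\Omega}$ is finite), so the lemma must be read, as in its application, with $\overline\Omega$ contained in the punctured unit ball where $\log\frac{1}{|x|}>0$.
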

For any blow-down sequence $\hat w_{r_i}(\xi)$ with $r_i\to 0$, there is $\xi_{r_i}$ 
with $|\xi_{r_i}| =1$ and
\begin{equation}\label{Equ:minimum-r_k}
\hat w_{r_i} (\xi_{r_i}) = w_{r_i} (\xi_{r_i}) = \frac {w (r_i\xi_{r_i})}{\log\frac 1{r_i}} = \min_{|x| = r_i} \frac {w (x)}{\log\frac 1{|x|}} 
\to \liminf_{x\to 0}
\frac {w(x)}{\log\frac 1{|x|}} \le \hat C.
\end{equation}
Because, Lemma \ref{Lem:comparison} implies that the quotient $\min_{|x| = r}\frac {w(x)}{\log\frac 1{|x|}}$ is non-increasing as 
$r\to 0$, since the infimum is always achieved at the inner sphere of the annulus $B(0, r_0)\setminus B(0, s)$ for $r_0 < 1$ fixed while
$s$ arbitrarily small. Notice that we may assume 
$$
\lim_{|x| \to 1^-} \frac {w(x)}{\log\frac 1 {|x|}} = \infty
$$
if necessary. Because, when proving Theorem \ref{Thm:AH-1} one may deal with $w+\epsilon$ for arbitrarily small $\epsilon$ instead. 
We will present the proof of the uniqueness of all blow-down limits 
based on Lemma \ref{Lem:comparison} in the proof of Theorem \ref{Thm:AH-1} in next section.


\subsection{The last step of the proof of Theorem \ref{Thm:AH-1}}\label{Subsect:proof of Theorem 3.1}
With all the preparation we finally are ready to prove Theorem \ref{Thm:AH-1}. At this point, we have cleared almost everything 
except that the convergences of each blow-down sequence $\hat w_{r_k}$ to a constant is weaker than the
pointwise one. This in principle is caused by the fact that the density function is just a Radon measure $\mu$. 
Our main goal here, after presenting a proof of the uniqueness of 
the sequential blow-down limits, is to extract a possible bad set $E$, which is again $n$-thin so that outside 
$E$ the limit of the quotient $\frac{ w(x)}{\log\frac 1{|x|}}$ is $\liminf_{x\to 0} \frac{ w(x)}{\log\frac 1{|x|}}$ pointwisely. 

\begin{proof}[The proof of Theorem \ref{Thm:AH-1}] To recap, first, from Proposition \ref{Prop:unbdd-thin} in Section \ref{Subsect:step-1-AH}, 
we know that, outside the thin set $\hat{E}$, 
$$
\frac{w(x)}{\log\frac{1}{|x|}}\leq\hat{C}.
$$
Then, based on the discussion in Section \ref{Subsect:step-2-AH}, we consider the modified blow-down functions  
$\hat{w}_{r}(\xi)$ by \eqref{Equ:modification} for $\alpha=1+\hat{C}$. 
From Lemma \ref{Lem:convergence lemma} and Lemma \ref{Lem:modification},
for a sequence $r_i\to 0$, we may assume that $\hat{w}_{r_i}(\xi)$, 
converges to a bounded $n$-harmonic function $\hat w (\xi)$ in $A(0, \infty) = \mathbb{R}^n\setminus\{0\}$ 
(for some subsequence if necessary). When appying Lemma \ref{Lem:convergence lemma} and verifying $\mu_i\to 0$ in any compact subset of $\mathbb{R}^n\backslash\{0\}$, one needs 
to use \eqref{Equ:n-harmonicity} and Remark \ref{Rem:theorem 5.15}.
Thanks to Liouville type theorem of Reshetnyak \cite{Red}, $0$ and $\infty$ 
are removable singularities of $\hat{w} (\xi)$ and $\hat w(\xi) = \hat w$ is a constant. Finally, one would like to use
Lemma \ref{Lem:comparison} in Section \ref{Subsect:step-3-AH} to derive 
\begin{equation}\label{Equ:weak-limit}
\hat w = \gamma^-= \liminf_{r\to 0} \frac {w(x)}{\log\frac 1{|x|}}
\end{equation}
for any sequence $r_i\to0$. The remaining issue is that all the sequential convergences are only the one weak in $W^{1,n}$ and 
strong in $W^{1, p}$ for any $1\leq p < n$, which does not yet imply point-wise convergence as desired.  
\\

Now let us start with a proof of the uniqueness of $\hat w$ (i.e. \eqref{Equ:weak-limit}). 
Recall from \eqref{Equ:minimum-r_k}
$$
\hat w_{r_i} (\xi_{r_i}) \to \gamma^- = \liminf_{r\to 0} \frac {w(x)}{\log\frac 1{|x|}}
$$
for any sequence $r_i\to 0$.
Since $\hat{w}_{r_{i}}(\xi)$ converges to $\hat{w}$ strongly in $W^{1,p}(A(r_{0},\frac{1}{r_{0}})),1\leq p<n$ for any fixed small $r_0 > 0$, 
we know that
\[
\int_{B_{\frac 12}(\xi_{r_{i}})}(\hat{w}_{r_{i}}(\xi)-\gamma^{-})^{q}\rightarrow |B_{\frac 12}
(\xi_{r_{i}})|(\hat{w}-\gamma^{-})^{q}  \quad \text{ as $r_{i}\rightarrow0$}
\]
for any $0 < q < \infty$. By the way, $w_r (\xi) \geq \gamma^-$ due to the definition of $\gamma^-$ and Lemma \ref{Lem:comparison}. 
By invoking the weak Harnack inequality \cite[Theorem 3.51]{HKM}), we know
\[
\hat{w}_{r_{i}}(\xi_{r_i}) - \gamma^{-}\geq C(\int_{B_{\frac{1}{2}}(\xi_{r_{i}})}(\hat{w}_{r_{i}}(\xi)-\gamma^{-})^{q})^{\frac{1}{q}}
\]
for any $\xi \in B_{\frac 14}(\xi_{r_i})$ and some $0 < q < \infty$. 
Clearly this would be a contradiction if $\hat w \neq \gamma^-$. So this finishes the proof of the uniqueness for sequential blow-down 
limits.
\\

In the following, what we need to do is to refine the argument in the proof of Proposition \ref{Prop:unbdd-thin} to show that, 
outside an $n$-thin set, the quotient $\frac {w(x)}{\log\frac 1{|x|}}$ is not just bounded but actually convergent at the origin pointwisely. 
We will use the same notations and follow the same process. But we are in a better position than that we were in the proof of Proposition 
\ref{Prop:unbdd-thin}. First, we have the following improved \eqref{Equ:inf estimate} in Lemma \ref{Lem:inf estimate} 
\begin{equation}\label{Equ:improved inf estimate}
\lim_{ y \to 0} \inf_{x \in B(y, \frac 1{\alpha} |y|)} \frac {w(x)}{\log\frac 1{|x|}} = \gamma^-.
\end{equation}
This is because, from the uniqueness of all blow-down limits, we know
$$
\lim_{r\to 0} \hat w_r(\xi) = \gamma^-
$$
almost everywhere in $A_{r_0, \frac 1{r_0}}$ and that $\hat w_r$ and $w_r$ only differ at the set $\tilde E$ that is $n$-thin 
at the origin. In fact we have the following, which is even more useful.

\begin{lemma}\label{Lem:better inf estimate} Under the assumptions in Theorem \ref{Thm:AH-1}. 
\begin{equation}\label{Equ:better inf estimate}
\lim_{y \to 0} \frac {\inf_{B(y, \alpha |y|)} w(x)}{\log\frac 1{|y|}} = \gamma^-
\end{equation}
for any fixed $\alpha\in (0, 1)$.
\end{lemma}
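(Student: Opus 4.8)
The plan is to rescale the statement into a uniform assertion about the blow-downs and then establish the two one-sided bounds separately. Writing $r=|y|$ and $\hat y = y/|y|\in S^{n-1}$, the substitution $x=r\xi$ identifies $B(y,\alpha|y|)$ with $B(\hat y,\alpha)$ and gives
\[
\frac{\inf_{B(y,\alpha|y|)}w(x)}{\log\frac1{|y|}}=\inf_{B(\hat y,\alpha)}w_r(\xi),
\]
so the lemma is equivalent to the statement that $\inf_{B(\hat y,\alpha)}w_r\to\gamma^-$ as $r\to0$, \emph{uniformly} over $\hat y\in S^{n-1}$.

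For the lower bound I would use the monotonicity already recorded via Lemma \ref{Lem:comparison}: the spherical minimum $\rho\mapsto\min_{|z|=\rho}\frac{w(z)}{\log\frac1{|z|}}$ is non-increasing in $\rho$ with limit $\gamma^-$, hence $w(x)\ge\gamma^-\log\frac1{|x|}$ for all small $x$. If $x\in B(y,\alpha|y|)$ with $\alpha\in(0,1)$, then $|x|<(1+\alpha)|y|$, so $\log\frac1{|x|}\ge\log\frac1{|y|}-\log(1+\alpha)$, and therefore $\inf_{B(y,\alpha|y|)}w\ge\gamma^-\big(\log\frac1{|y|}-\log(1+\alpha)\big)$ once $|y|$ is small. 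Dividing by $\log\frac1{|y|}$ and letting $y\to0$ yields $\liminf\ge\gamma^-$, with no dependence on the direction $\hat y$.

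For the upper bound I would argue by contradiction, combining the compactness of $S^{n-1}$ with the blow-down convergence. If the assertion failed there would be $\varepsilon\in(0,1)$ and $y_k\to0$ such that, in view of the lower bound just proved, $\inf_{B(y_k,\alpha|y_k|)}w\ge(\gamma^-+\varepsilon)\log\frac1{|y_k|}$ for $k$ large; setting $r_k=|y_k|$, $\hat y_k=y_k/|y_k|$ and rescaling, this reads $w_{r_k}\ge\gamma^-+\varepsilon$ on $B(\hat y_k,\alpha)$. Passing to a subsequence we may assume $\hat y_k\to\hat y_\infty\in S^{n-1}$ and, by the uniqueness of blow-down limits (Section \ref{Subsect:step-3-AH} and the proof of Theorem \ref{Thm:AH-1}), also $\hat w_{r_k}\to\gamma^-$ in $L^1_{\text{loc}}(\mathbb{R}^n\setminus\{0\})$. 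For $k$ large $B(\hat y_\infty,\tfrac\alpha2)\subset B(\hat y_k,\alpha)$, so $w_{r_k}\ge\gamma^-+\varepsilon$ on $B(\hat y_\infty,\tfrac\alpha2)$; since the cut-off $a_{\hat C+1}$ in \eqref{Equ:modification} is non-decreasing and fixes every value $\le\hat C+1$, and $\gamma^-\le\hat C<\hat C+1$ by \eqref{Equ:minimum-r_k}, it follows that $\hat w_{r_k}=a_{\hat C+1}(w_{r_k})\ge\gamma^-+\varepsilon$ on $B(\hat y_\infty,\tfrac\alpha2)$ as well. But $B(\hat y_\infty,\tfrac\alpha2)\subset A_{\frac12,\frac32}$ is a fixed annulus away from $0$ and $\infty$, so $\int_{B(\hat y_\infty,\alpha/2)}\hat w_{r_k}\,d\xi\to\gamma^-\,|B(\hat y_\infty,\alpha/2)|$, contradicting $\int_{B(\hat y_\infty,\alpha/2)}\hat w_{r_k}\,d\xi\ge(\gamma^-+\varepsilon)\,|B(\hat y_\infty,\alpha/2)|>0$.

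The hard part is precisely the uniformity over the direction $\hat y\in S^{n-1}$: the blow-down $w_r$ is a single function, but the competitor ball $B(\hat y,\alpha)$ slides around the unit sphere, and the $L^1_{\text{loc}}$ (or a.e.) convergence of the $\hat w_r$ does not by itself control $\inf_{B(\hat y,\alpha)}w_r$ uniformly in $\hat y$; the compactness argument above is what supplies this, once one observes that the relevant balls all sit in a fixed annulus on which the blow-down convergence is available. A minor bookkeeping point is that the blow-down convergence was obtained only along subsequences, but since every sequential limit is forced to equal $\gamma^-$, passing to one further subsequence in the argument above is harmless. This lemma is the quantitative sharpening of \eqref{Equ:improved inf estimate}, from which the cruder normalization there is recovered by comparing $\log\frac1{|x|}$ with $\log\frac1{|y|}$ on $B(y,\alpha|y|)$.
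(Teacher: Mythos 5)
Your proposal is correct, and it rests on the same two pillars as the paper's argument: the comparison-principle lower bound $w(x)\ge\gamma^-\log\frac1{|x|}$ (the paper's \eqref{Equ:left-side}, coming from Lemma \ref{Lem:comparison} and the monotonicity noted after \eqref{Equ:minimum-r_k}) and the uniqueness of the blow-down limits established in Section \ref{Subsect:proof of Theorem 3.1}. Where you diverge is in how the upper bound is extracted. The paper first records \eqref{Equ:improved inf estimate}, i.e.\ that the infimum over $B(y,c|y|)$ of the quotient $\frac{w(x)}{\log\frac1{|x|}}$ tends to $\gamma^-$, justifying it by the almost-everywhere convergence of $\hat w_r$ to $\gamma^-$ together with the fact that $\hat w_r$ and $w_r$ differ only on an $n$-thin set; the Lemma is then deduced by evaluating at points achieving the two infima and squeezing with the ratio $\log\frac1{|x_0|}/\log\frac1{|y|}\to1$, since $|x_0|/|y|\in[1-\alpha,1+\alpha]$. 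You instead use the exact rescaling identity $\frac{\inf_{B(y,\alpha|y|)}w}{\log\frac1{|y|}}=\inf_{B(\hat y,\alpha)}w_r$, and obtain the upper bound by contradiction: compactness of $S^{n-1}$, the inclusion $B(\hat y_\infty,\alpha/2)\subset B(\hat y_k,\alpha)$, monotonicity of the cut-off in \eqref{Equ:modification} together with $\gamma^-\le\hat C$ from \eqref{Equ:minimum-r_k}, and the $L^1$ convergence of $\hat w_{r_k}$ to $\gamma^-$ on a fixed ball away from $0$ and $\infty$ (available from Lemma \ref{Lem:convergence lemma} along a further subsequence, which is harmless since every sequential limit is $\gamma^-$). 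What your route buys is that the uniformity in the direction $\hat y$ — which in the paper is absorbed implicitly into \eqref{Equ:improved inf estimate} — is made explicit, and the thin-set/pointwise-a.e.\ bookkeeping is replaced by an integral estimate over a ball of positive measure; the paper's route is shorter once \eqref{Equ:improved inf estimate} is granted, and has the mild advantage of not passing through the cut-off functions at this stage. Both proofs are sound, and your subsequence remark disposes of the only delicate point in your version.
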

\begin{proof}
First, if let
$$
\inf_{B(y, \frac 1\alpha |y|)} \frac {w(x)}{\log\frac 1{|x|}} = \frac {w(x_0)}{\log\frac 1{|x_0|}}
$$
for some $x_0\in \bar B(y, \alpha |y|)$, then $\frac {|y|}{|x_0|} \in [\frac 1{1+\alpha}, \frac 1{1-\alpha}]$ and
$$
\frac {\inf_{B(y, \alpha |y|)} w(x)}{\log\frac 1{|y|}} \leq \frac {w(x_0)}{\log\frac 1{|y|}} =
\frac {w(x_0)}{\log\frac 1{|x_0|}}\cdot \frac {\log\frac 1{|x_0|}} {\log\frac 1{|y|}} \leq \frac {w(x_0)}{\log\frac 1{|x_0|}}
( 1 + \frac {\log\frac 1{1-\alpha}}{\log\frac 1{|y|}}) .
$$
Next, if let 
$$
\inf_{B(y, \alpha |y|)} w(x) = w(y_0)
$$ 
for some $y_0\in \bar B(y, \alpha |y|)$, then $\frac {|y|}{|y_0|}\in [\frac 1{1+\alpha}, \frac 1{1-\alpha}]$ and
$$
\frac {\inf_{B(y, \alpha |y|)} w(x)}{\log\frac 1{|y|}} = \frac {w(y_0)}{\log\frac 1{|y|}} = \frac {w(y_0)}{\log\frac 1{|y_0|}} \cdot 
\frac {\log\frac 1{|y_0|}} {\log\frac 1{|y|}} \geq \gamma^- ( 1 + \frac {\log \frac 1{1+\alpha}}{\log\frac 1{|y|}}).
$$
Therefore, squeezing from both sides, we derive \eqref{Equ:better inf estimate}. The proof is completed. 
\end{proof}

Secondly, we apply \cite[Theorem 1.6]{KM} to $w(y) - \inf_{B(y, \frac 34|y|)} w$ in $B(y, \frac 34|y|)$ and obtain
\begin{equation}\label{Equ:2-KM94}
w (y) - \inf_{B(y, \frac 3 4 |y|)} w(x) \leq C_2\inf_{B(y, \frac 1 4 |y|)} (w - \inf_{B(y, \frac 34 |y|)} w) 
+ C_3 W_{1, n}^\mu (y, \frac12 |y|).
\end{equation}
Hence, 
$$
\frac {w(y)}{\log\frac 1{|y|}} \leq \frac {\inf_{B(y, \frac 34 |y|)} w(x)}{\log\frac 1{|y|}} 
+ C_2\frac {\inf_{B(y, \frac 14 |y|)} w}{\log\frac 1{|y|}} - C_2\frac{\inf_{B(y, \frac 34 |y|)} w}{\log\frac 1{|y|}} 
+ C_3 \frac {W_{1, n}^\mu(y, \frac 12|y|)}{\log\frac 1{|y|}}
$$
which implies, by \eqref{Equ:better inf estimate} in Lemma \ref{Lem:better inf estimate},
\begin{equation}\label{Equ:leave-potential}
\limsup_{y\to 0} \frac {w(y)}{\log\frac 1{|y|}} \leq \gamma^- + C_3 \limsup_{y\to 0} \frac {W_{1, n}^\mu(y, \frac 12 |y|)}{\log\frac 1{|y|}}.
\end{equation}

Thirdly, regarding the Wolff potential term in \eqref{Equ:leave-potential}, 
we will also need an improved \eqref{Equ:potential-estimate}. For this purpose we first 
consider the convergent infinite series
$$
\sum_{i=1}^\infty \mu(\Omega_i) \leq 3 \mu(B(0, 1)) < \infty
$$
and use Paul du Bois-Reymond Theorem \cite[(5) Page 40]{Brom} (cf. \cite{Rey}) 
to find a sequence $\zeta_{i}\rightarrow0^+$ as $i\rightarrow\infty$ such that 
\[
\sum_{i=1}^{\infty}\frac{1}{\zeta_{i}}\mu(\Omega_i) < \infty.
\]
for all $y\in A_{0, 1}$. 
From the similar argument as in the proof of \eqref{Equ:potential-estimate}, we have,
\begin{equation}\label{Equ:improved-potential-estimate}
cap_{n}(\{y\in B_{j}^{i}: W_{1,n}^{\mu}(y, \frac 12 |y|)>\frac{\zeta_i^\frac1{n-1}}{C_{1}}\log\frac{1}{|y|}\},2B_{j}^{i})
\leq\frac{C\frac 1{\zeta_i} \mu(2B_{j}^{i})}{i^{n-1}}.
\end{equation}
Let 
$$
E_{ij} = \{y\in B_{j}^{i}: W_{1,n}^{\mu}(y, \frac 12 |y|)>\frac{\zeta_i^\frac1{n-1}}{C_{1}}\log\frac{1}{|y|}\} \bigcap \omega_i, 
\ E_i = \bigcup_j E_{ij}, \text{ and }E = \bigcup_i E_i.
$$
Then \eqref{Equ:improved-potential-estimate} implies that
$$
\sum_{i=1}^\infty i^{n-1} cap_n(E\bigcap\omega_i, \Omega_i) \leq \sum_i \frac 1{\zeta_i}\mu(\Omega_i) < \infty,
$$
which says that $E$ is $n$-thin and
\begin{equation}\label{Equ:outside-bad-set}
\lim_{y\notin E \text{ and } y\to 0}  \frac {W_{1, n}^\mu(y, \frac 12 |y|)}{\log\frac 1{|y|}} = 0.
\end{equation}
Combining 
\begin{equation}\label{Equ:left-side}
\frac {w(y)}{\log\frac 1 {|y|}} \geq \gamma^-
\end{equation}
with \eqref{Equ:leave-potential} and \eqref{Equ:outside-bad-set}, we finally arrive at
$$
\lim_{y\notin E \text{ and } y\to 0} \frac {w(y)}{\log\frac 1{|y|}} = \gamma^- = \liminf_{y\to 0}  \frac {w(y)}{\log\frac 1{|y|}}.
$$
At last we will prove that, if $(B(0,2)\backslash\{0\},e^{2w}|dx|^{2})$
is complete at the origin, then $m\ge1$. Since
\[
\lim_{x\notin E,x\to0}\frac{w(x)}{\log\frac{1}{|x|}}=m,
\]
 if we can find a ray $P$ starting from $0$, such that $P\cap E\cap B(0,r_{0})=\emptyset,0<r_{0}<2$,
then from the completeness, for any $\varepsilon>0$
\[
+\infty=\int_{P\cap B(0,r_{0})}e^{w}dr\le\int_{0}^{r_{0}}\frac{1}{r^{m+\varepsilon}}dr.
\]
So we know that $m+\varepsilon\ge1$, which implies that $m\ge1$.
The question is reduced to finding such ray $P$ which has no intersection
with the thin set $E$, at least in a small ball $B(0,r_{0})$.

Define the projection map
\begin{align*}
Pr:\omega(0,0) & \mapsto\partial B(0,1),\\
(r,\theta) & \to(1,\theta).
\end{align*}
It is obvious a Lipschitz map, with Lipschitz constant $2$. From
the conformal invariance property of $n$-capacity, we know 
\[
cap_{n}(E\cap\omega(0,i),\Omega(0,i))=cap_{n}((2^{i}E)\cap\omega(0,0),\Omega(0,0)).
\]
From the monotonicity property of capacity with respect to a Lipschitz
map, Theorem 5.2.1 of \cite{AH96}, we know 
\[
cap_{n}((2^{i}E)\cap\omega(0,0),\Omega(0,0))\ge cap_{n}(Pr((2^{i}E)\cap\omega(0,0)),\Omega(0,0)).
\]
 From the thin property of $E$, we know 
\[
\sum_{i}i^{n-1}cap_{n}(Pr((2^{i}E)\cap\omega(0,0)),\Omega(0,0))<+\infty.
\]
So 
\begin{align*}
 & \lim_{i_{0}\to+\infty}cap_{n}(\cup_{i\ge i_{0}}Pr(2^{i}E\cap\omega(0,0)),\Omega(0,0))\\
\le & \lim_{i_{0}\to+\infty}\sum_{i\ge i_{0}}cap_{n}(Pr(2^{i}E\cap\omega(0,0)),\Omega(0,0))\\
= & 0.
\end{align*}
Since $cap_{n}(\partial B(0,1),\Omega(0,0))$ is a positive number
depending on $n$, so we can find a ray $P$ and $r_{0}>0$, such that
$P\cap E\cap B(0,r_{0})=\emptyset$.
Thus the proof of Theorem \ref{Thm:AH-1} is completed.
\end{proof}


\section{Higher dimensional Taliaferro's estimates}\label{Sec:taliaferro}

Let us start with Taliaferro's estimates in 2 dimensions.

\begin{theorem*} (\cite[Theorem 2.1]{Tal-2}) Suppose that $u$ is $C^2$ positive solution to
$$
0\leq -\Delta u \leq f(u)
$$
in a punctured neighborhood of the origin in $\mathbb{R}^2$, where $f: (0, \infty)\to (0, \infty)$ is a continuous function such that 
$$
\log f(t) = O(t) \text{ as $t\to \infty$}.
$$
Then, either $u$ has a $C^1$ extension to the origin or 
\begin{equation}\label{Equ:good limit}
\lim_{x\to 0} \frac {u(x)}{\log \frac 1{|x|}} = m
\end{equation}
for some finite positive number $m$.
\end{theorem*}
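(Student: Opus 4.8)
The plan is to specialize to $n=2$ the two-part strategy behind Theorem \ref{Thm:main theorem-2-intro}: first use the Arsove--Huber-type Theorem \ref{Thm:AH-1} to produce the candidate exponent $m$ together with the clean limit along the complement of an exceptional $2$-thin set, and then use the Brezis--Merle inequality \eqref{Equ:b-m} (with $n=2$) to destroy that exceptional set. Write $g=-\Delta u\ge0$. The hypothesis $\log f(t)=O(t)$ means $f(t)\le C_0e^{\alpha t}$ for $t\ge1$ and some $\alpha,C_0>0$, while $f$ is bounded on compact subsets of $(0,\infty)$. Since a positive superharmonic function near an isolated singularity either stays bounded there (removable singularity) or tends to $+\infty$, I would first dispose of the bounded case: there the comparison principle keeps $u$ bounded below by a positive constant near $0$, so $f(u)$ is bounded, $g\in L^{\infty}$, and $W^{2,p}$-regularity gives a $C^{1}$ (indeed $C^{1,\gamma}$) extension of $u$ across the origin; this is the first alternative.

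From now on assume $\lim_{x\to0}u(x)=+\infty$. By Theorem \ref{Thm:B-V} and Remark \ref{Rem:vis-radon}, $u$ extends to a $2$-superharmonic function on a small ball with $-\Delta u=g+\beta\delta_{0}$, $g\in L^{1}$, $\beta\ge0$; after rescaling to $B(0,2)$, Theorem \ref{Thm:AH-1} applies and yields a set $E\subset\r^{2}$ that is $2$-thin at the origin together with $m=\liminf_{x\to0}u(x)/\log\tfrac1{|x|}\in[0,\infty)$ such that $\lim_{x\notin E,\,|x|\to0}u(x)/\log\tfrac1{|x|}=m$ and $u(x)\ge m\log\tfrac1{|x|}-C$. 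Separating the point mass and a bounded harmonic correction, one obtains near $0$ the decomposition $u(x)=N_{\rho}[g](x)+m\log\tfrac1{|x|}+O(1)$, where $N_{\rho}[g]\ge0$ is the Green potential of $g$ on $B(0,\rho)$ and $2\pi m=\beta<\infty$. The whole statement is thereby reduced to showing
\begin{equation}\label{Equ:plan-potential-decay}
\frac{N_{\rho}[g](x)}{\log\frac1{|x|}}\longrightarrow0\qquad\text{as }x\to0 ,
\end{equation}
since \eqref{Equ:plan-potential-decay} upgrades the limit off $E$ to a genuine limit $\lim_{x\to0}u(x)/\log\tfrac1{|x|}=m$, and then $m>0$ is automatic: if $m=0$ then $g\le C_0e^{\alpha u}=C_0e^{\alpha N_{\rho}[g]+o(\log\frac1{|x|})}\le C_{\varepsilon}|x|^{-\varepsilon}$ for every $\varepsilon>0$, so $N_{\rho}[g]$ and hence $u$ would be bounded, contradicting $\lim u=+\infty$.

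The core is \eqref{Equ:plan-potential-decay}, and this is where \eqref{Equ:b-m} enters. Because $g\in L^{1}$ carries no atom, the mass $\|g\|_{L^{1}(B(0,\rho))}$ --- and, at the scale of a point $z$ near the origin, the mass $\|g\|_{L^{1}(\{|z|/2<|y|<2|z|\})}$ --- can be made as small as we please by shrinking $\rho$, respectively by letting $z\to0$. Feeding this smallness into \eqref{Equ:b-m} for $n=2$ (where $W^{\mu_g}_{1,2}(\cdot,D)$ is comparable to the logarithmic potential of $g$, and the right-hand side of \eqref{Equ:b-m} is of the order of the volume, hence small at small scales) gives exponential integrability of $N_{\rho}[g]$ to any prescribed power, with a small constant. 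One then rewrites the equation as $g\le C_0|x|^{-\alpha m}e^{\alpha N_{\rho}[g]}e^{O(1)}$ and splits the potential defining $N_{\rho}[g](x)$ into a far part over $\{|x-y|\ge|x|/2\}$, bounded by $\tfrac1{2\pi}\|g\|_{L^{1}(B(0,\rho))}\log\tfrac1{|x|}+O(1)$ --- which is $o(\log\tfrac1{|x|})$ once $\rho\to0$ --- and a near-diagonal part over $\{|x-y|<|x|/2\}$, estimated by Hölder's inequality using the exponential integrability just obtained; summing and letting $\rho\to0$ gives \eqref{Equ:plan-potential-decay}.

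The step I expect to be the main obstacle is making this last estimate genuinely close. It is self-referential --- $g$ is controlled by $e^{\alpha u}$ and $u$ contains $N_{\rho}[g]$ --- and, crucially, nothing a priori links the exponent $\alpha$ to the size of $m=\tfrac{\beta}{2\pi}$, so a naive near-diagonal bound only works when $\alpha m$ is small. Overcoming this forces one to apply \eqref{Equ:b-m} at the scale $|x|$ rather than at a fixed scale $\rho$, where both the threshold exponent in \eqref{Equ:b-m} and the smallness of its constant improve as $|x|\to0$, together with a careful iteration in the scale; this is precisely the analytic content being streamlined from \cite{Tal,Tal-1,Tal-2}. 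Once \eqref{Equ:plan-potential-decay} is in hand, combining it with the bounded case above completes the proof; the $n$-dimensional version of the whole argument is carried out as Theorem \ref{Thm:main theorem-2-intro}.
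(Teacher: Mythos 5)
Your scaffolding (removable case; Theorem \ref{Thm:B-V} plus the planar Riesz decomposition $u=N_\rho[g]+\frac{\beta}{2\pi}\log\frac1{|x|}+O(1)$; Theorem \ref{Thm:AH-1} for the candidate $m$; the Wolff--potential inequality \eqref{Equ:b-m} to remove the thin set) is the same as what the paper uses for Theorem \ref{Thm:main theorem-2}, whose proof specializes to $n=2$. But the decisive step is not actually carried out: you never prove $N_\rho[g](x)=o(\log\frac1{|x|})$, and you yourself flag that your near-diagonal H\"older bound only closes when $\alpha m$ is small. That worry is justified, and the remedy you point to does not repair it: applying \eqref{Equ:b-m} at scale $|x|$ merely replaces the fixed constant on the right-hand side by one of order $|x|^{2}$, so the near part is still of size $|x|^{\,2-\alpha m-o(1)}\log\frac1{|x|}$ and the argument fails whenever $\alpha m\ge 2$; the ``careful iteration in the scale'' is left unspecified, i.e.\ the analytic core of Taliaferro's theorem is deferred to \cite{Tal,Tal-1,Tal-2} rather than proved.

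What closes this step in the paper's proof of Theorem \ref{Thm:main theorem-2} is a different splitting, in the scale variable rather than in space. One first derives a power-law mass bound $\mu(B(x,s))\le C\,|x|^{-K}s^{\sigma}$ for all $s\le\frac12|x|$, with a \emph{fixed} $\sigma>0$ and a possibly large $K$ (comparable to the growth exponent times $C_2(\gamma^-+1)$): this uses $w\le C_2\inf_{B}w+C_3W^{\mu}_{1,n}$, H\"older, the $L^p$ gradient bound (absent in $n=2$), and \eqref{Equ:b-m} applied on a fixed small ball where $\|g\|_{L^1}$ is small enough that a \emph{fixed} exponential power of the Wolff potential is integrable --- at no point does the exponential integrability have to beat $|x|^{-\alpha m}$ (this is \eqref{Equ:when s small}). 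Then one splits $W^{\mu}_{1,2}(x,\frac12|x|)=\int_0^{\rho}\mu(B(x,s))\frac{ds}{s}+\int_{\rho}^{|x|/2}\mu(B(x,s))\frac{ds}{s}$ at $\rho=|x|^{N}$ with $N>K/\sigma$: the first piece is $O(|x|^{\text{positive}})$ no matter how large $K$ is, and the second is at most $\mu(B(x,\frac12|x|))\cdot N\log\frac1{|x|}=o(\log\frac1{|x|})$ because $g$ carries no atom at the origin; this yields \eqref{Equ:no thin set-4}. Thus a large $\alpha m$ only forces a large $N$, which is harmless --- that is the mechanism your proposal is missing. A smaller point: your dichotomy ``bounded or $u\to\infty$'' is not exhaustive a priori (a positive superharmonic $u$ can be unbounded without a limit); in this scheme that case is eliminated only after the potential estimate is in hand, by distinguishing $\beta>0$ from $\beta=0$, so it should be treated there rather than assumed away at the start.
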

This can be viewed as the improvement of \cite[Theorem 1.3]{AH}, having no thin subset where 
the asymptotic behavior may differ from \eqref{Equ:good limit}. Our next goal is to establish the higher dimensional analogue of 
\cite[Theorem 2.1]{Tal-2} as follows:

\begin{theorem} \label{Thm:main theorem-2} Let $w \in C^2(B(0, 2)\setminus \{0\})$ be nonnegative and satisfy
\begin{equation}\label{Equ:nonlinearity}
 -\Delta_n w = f(x, w, \nabla w)
\end{equation}
in a punctured neighborhood of the origin in $\mathbb{R}^n$ and that
$$
\lim_{x\to 0} w(x) = +\infty, 
$$ 
where $f$ is a nonnegative function satisfying
\begin{equation}\label{Equ:growth condition}
0 \leq f (x, w, \nabla w) \leq C|\nabla w|^{n-2}e^{2w}
\end{equation}
for some fixed constant $C$. Then 
\begin{equation}\label{Equ:theorem 4.1}
\lim_{|x|\rightarrow 0}\frac{w(x)}{\log\frac{1}{|x|}}= m \geq 0
\end{equation}
and
\[
w(x)\geq m\log\frac{1}{|x|} \text{ for $x\in B(0, 1)\setminus \{0\}$}.
\]
Moreover, if $e^{2w}|dx|^2$ is complete and non-compact at the origin, then $m\geq1$. 
\end{theorem}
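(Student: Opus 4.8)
The plan is to run Theorem~\ref{Thm:AH-1} and then eliminate its exceptional $n$‑thin set by means of the Adams--Moser--Trudinger inequality \eqref{Equ:b-m} for the Wolff potential. First I would extend $w$ across the origin: since $\lim_{x\to 0}w(x)=+\infty$ and $w\in C^{2}(B(0,2)\setminus\{0\})$ (so $|\nabla w|^{n}$ and $-\Delta_n w$ are locally integrable on the punctured ball), Theorem~\ref{Thm:B-V} together with Remark~\ref{Rem:vis-radon} show that $w$ is $n$‑superharmonic on $B(0,1)$ with $-\Delta_n w=\mu$, where the nonnegative Radon measure $\mu$ restricts to $f\,dx$ on $B(0,1)\setminus\{0\}$ and $f\in L^{1}(B(0,1))$. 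Theorem~\ref{Thm:AH-1} then gives $m=\liminf_{|x|\to0}w(x)/\log\frac1{|x|}\ge0$ together with an $n$‑thin set $E$ off which the quotient tends to $m$. Two consequences of that proof are what I will actually use: after the standard replacement of $w$ by $w+\varepsilon$ and $\varepsilon\to0$, Lemma~\ref{Lem:comparison} (the monotonicity of $\inf_{|x|=r}w/\log\frac1{|x|}$) already yields $w(x)\ge m\log\frac1{|x|}$ on $B(0,1)\setminus\{0\}$, so the stated lower bound with zero constant is in hand; and combining the pointwise estimate \eqref{Equ:2-KM94} from the proof of Theorem~\ref{Thm:AH-1} with Lemma~\ref{Lem:better inf estimate} gives, with no exceptional set,
\[
w(y)\ \le\ (m+o(1))\log\tfrac1{|y|}+C_{3}\,W^{\mu}_{1,n}(y,\tfrac12|y|),\qquad\text{hence}\qquad
\limsup_{x\to0}\frac{w(x)}{\log\frac1{|x|}}\ \le\ m+C_{3}\,\limsup_{x\to0}\frac{W^{\mu}_{1,n}(x,\tfrac12|x|)}{\log\frac1{|x|}}.
\]
Thus the whole theorem reduces to proving $W^{\mu}_{1,n}(x,\tfrac12|x|)=o\!\left(\log\frac1{|x|}\right)$ as $x\to0$ with no exceptional set; granting this, $\limsup_{x\to0}w/\log\frac1{|x|}\le m=\liminf_{x\to0}w/\log\frac1{|x|}$, which is \eqref{Equ:theorem 4.1}.

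The leverage for this reduction is locality: for $|x|$ small and $t<\tfrac12|x|$ the ball $B(x,t)$ lies in the annulus $A'_{|x|}=\{\tfrac12|x|<|y|<2|x|\}$, on which $\mu=f\,dx$, so $W^{\mu}_{1,n}(x,\tfrac12|x|)=\int_{0}^{|x|/2}\mu(B(x,t))^{1/(n-1)}\,\frac{dt}{t}$ sees only $f$ restricted to a thin annulus. The strategy is a bootstrap: upgrade $f$ from $L^{1}$ to $L^{p}$ on the annuli $A'_\rho$ for a fixed $p\in(1,\tfrac{n}{n-2})$ (any $p>1$ if $n=2$), with a polynomial bound $\|f\|_{L^{p}(A'_\rho)}\le C\rho^{\gamma}$, and then estimate the potential by H\"older: since $\mu(B(x,t))=\int_{B(x,t)}f\le\|f\|_{L^{p}(A'_{|x|})}\,|B(x,t)|^{1/p'}$,
\[
W^{\mu}_{1,n}(x,\tfrac12|x|)\ \le\ C\,\|f\|_{L^{p}(A'_{|x|})}^{1/(n-1)}\,|x|^{\frac{n}{p'(n-1)}}\ \le\ C\,|x|^{\frac{\gamma}{n-1}+\frac{n}{p'(n-1)}},
\]
which tends to $0$ as soon as $\gamma+\tfrac{n}{p'}>0$. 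To produce the bound on $\|f\|_{L^{p}(A'_\rho)}$, work on $A'_\rho$: by the growth hypothesis $f\le C|\nabla w|^{n-2}e^{2w}$, while the pointwise bound above gives $e^{2w(y)}\le C\rho^{-2m-o(1)}\exp\!\big(2C_{3}W^{\mu}_{1,n}(y,\tfrac12|y|)\big)$ on $A'_\rho$; because $f\in L^{1}$ near the origin we have $\|f\|_{L^{1}(A'_\rho)}\to0$, so for $\rho$ small and any fixed $q$ one may invoke Proposition~\ref{Pro:brezis-merle} on $\Omega=A'_\rho$ with datum $f|_{A'_\rho}$ — using $W^{\mu}_{1,n}(y,\tfrac12|y|)\le W^{\mu|_{A'_\rho}}_{1,n}(y,\operatorname{diam}A'_\rho)$ and $e^{qs}\le\exp\!\big(n(1-\delta)s/\|f\|_{L^{1}(A'_\rho)}^{1/(n-1)}\big)$ for $s\ge0$ — to obtain $\int_{A'_\rho}\exp\!\big(qW^{\mu}_{1,n}(y,\tfrac12|y|)\big)\,dy\le C(n,q)\,\rho^{\,n}$. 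Feeding this, together with the gradient integrability $\int_{A'_\rho}|\nabla w|^{q'}\to0$ for $q'<n$ from Lemma~\ref{Lem:nabla-q}, into H\"older's inequality applied to $\int_{A'_\rho}(|\nabla w|^{n-2}e^{2w})^{p}$ yields $\|f\|_{L^{p}(A'_\rho)}\le C\rho^{\gamma}$ for a suitable $\gamma=\gamma(n,p,m)$, completing the bootstrap and hence \eqref{Equ:theorem 4.1}.

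Once the clean limit \eqref{Equ:theorem 4.1} is in place the ``moreover'' clause follows quickly: completeness of $e^{2w}|dx|^{2}$ at the origin forces every radial segment $\{r\theta:0<r<r_{0}\}$ to have infinite length, i.e.\ $\int_{0}^{r_{0}}e^{w(r\theta)}\,dr=+\infty$, while \eqref{Equ:theorem 4.1} gives $e^{w(x)}\le|x|^{-m-\varepsilon}$ near the origin for each $\varepsilon>0$; thus $\int_{0}^{r_{0}}r^{-m-\varepsilon}\,dr=+\infty$, forcing $m+\varepsilon\ge1$, and $\varepsilon\to0$ gives $m\ge1$. I expect the bootstrap to be the main obstacle, and specifically the exponent bookkeeping: the H\"older exponents separating $|\nabla w|^{n-2}$ from $e^{2w}$ (and those inside the $e^{2w}$ factor) must be chosen so that, after inserting $e^{2w}\lesssim|x|^{-2m-o(1)}\exp(2C_{3}W^{\mu}_{1,n})$ and the Brezis--Merle bound $\int_{A'_\rho}\exp(qW^{\mu}_{1,n})\le C\rho^{n}$, the resulting power $\gamma$ of $\rho$ satisfies $\gamma+\tfrac{n}{p'}>0$, against the constraint $p<\tfrac{n}{n-2}$ dictated by Lemma~\ref{Lem:nabla-q} and the exponent $\tfrac{n}{p'(n-1)}$ forced by the singular weight $dt/t$; keeping the oscillation term $\inf_{B(y,|y|/4)}w-\inf_{B(y,3|y|/4)}w$ in \eqref{Equ:2-KM94} under control so that the ``$o(1)$'' above is harmless is part of this, and it is precisely here that one must exploit the sharpness of \eqref{Equ:b-m}. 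A secondary technical point is that $\mu$ may carry a point mass $\beta\delta_{0}$ at the origin (Theorem~\ref{Thm:B-V}); this is invisible to $W^{\mu}_{1,n}(x,\tfrac12|x|)$ for $x\neq0$ but must be carried through the application of Theorem~\ref{Thm:AH-1}.
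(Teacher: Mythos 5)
Your overall architecture is the paper's: extend $w$ across the origin via Theorem \ref{Thm:B-V}, apply Theorem \ref{Thm:AH-1}, get the lower bound from \eqref{Equ:left-side} and Lemma \ref{Lem:comparison}, and reduce the whole theorem, via \eqref{Equ:2-KM94} and Lemma \ref{Lem:better inf estimate}, to showing $W^{\mu}_{1,n}(x,\tfrac12|x|)=o(\log\tfrac1{|x|})$ with no exceptional set; the completeness clause is also handled exactly as in the paper. The genuine gap is in the one step you defer, the ``exponent bookkeeping,'' and it is not a technicality: as you have set it up it cannot be closed. Carrying out your H\"older split of $\int_{A'_\rho}f^p$ with inner exponent $a$ forces $(n-2)pa<n$ (Lemma \ref{Lem:nabla-q} only controls gradients below exponent $n$), the Brezis--Merle factor contributes at best $\rho^{\,n/(pa')}$, and the pointwise bound $e^{2w}\lesssim\rho^{-2m-o(1)}e^{2C_3W}$ costs $\rho^{-2pm-o(1)}$ inside the $p$-th power; hence the best attainable $\gamma$ satisfies $\gamma+\tfrac n{p'}=n-\tfrac n{pa}-2m-o(1)<2-2m$. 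So your requirement $\gamma+\tfrac n{p'}>0$ forces $m<1$, whereas the theorem must cover $m\ge1$ (this is precisely the regime of the ``moreover'' clause, and perturbations of $w=\gamma\log\tfrac1{|x|}$ with $\gamma\ge1$ and nonzero $f$ are admissible, since $e^{2w}$ is then huge near $0$ and \eqref{Equ:growth condition} is easy to satisfy). When the condition fails, your estimate for $W^{\mu}_{1,n}(x,\tfrac12|x|)$ is a negative power of $|x|$, which is not $o(\log\tfrac1{|x|})$; the same obstruction, worse, appears if one uses the cruder bound $e^{2w}\lesssim|x|^{-2C_2(\gamma^-+1)}e^{2C_3W}$ with the dimensional constant $C_2$ from \eqref{Equ:Key-potential}.

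The missing idea is the paper's two-scale splitting of the Wolff integral, which supplies the free parameter your single-scale H\"older estimate lacks. One proves only the local bound $\mu(B(x,s))\le C|x|^{-A}s^{\theta}$ for some fixed $\theta>0$ (your chain H\"older + Proposition \ref{Pro:brezis-merle} + the $L^{n-\frac12}$ gradient bound gives exactly this, cf. \eqref{Equ:when s small}), and then splits $\int_0^{|x|/2}\mu(B(x,s))^{1/(n-1)}\frac{ds}s$ at $\rho=|x|^{L}$: on $[0,\rho]$ the power bound yields $C|x|^{-A/(n-1)}\rho^{\theta/(n-1)}=O(1)$ once $L$ is chosen large relative to $A$, while on $[\rho,\tfrac12|x|]$ one uses only $\mu(B(x,s))^{1/(n-1)}\le\|f\|_{L^1(B(0,\frac32|x|)\setminus\{0\})}^{1/(n-1)}=o(1)$, so that piece is $o(1)\cdot L\log\tfrac1{|x|}=o(\log\tfrac1{|x|})$. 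This works for every $m$ and every $A$, with no constraint of the form $\gamma+\tfrac n{p'}>0$. Two secondary points: for $y$ near the inner edge of $A'_\rho$ the ball $B(y,t)$ with $t<\tfrac12|y|$ exits $A'_\rho$, so your comparison $W^{\mu}_{1,n}(y,\tfrac12|y|)\le W^{\mu|_{A'_\rho}}_{1,n}(y,\operatorname{diam}A'_\rho)$ needs $\mu$ restricted to a slightly larger annulus (harmless, but should be stated); and the paper first establishes uniform boundedness of $w/\log\tfrac1{|x|}$ (Lemma \ref{Lem:no thin set}) by a blow-up argument, which your route bypasses by quoting the conclusions of the proof of Theorem \ref{Thm:AH-1} --- that shortcut is acceptable, but it does not repair the exponent problem above.
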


\begin{remark} \label{Rem:critical growth}
We would like to make a remark that the growth condition \eqref{Equ:growth condition} can be replaced by
\begin{equation}\label{Equ:growth condition-1}
0 \leq f(x, w, \nabla w) \leq C |\nabla w|^p e^{\alpha w}
\end{equation}
for any $p\in (0, n)$ and $\alpha >0$. This can be seen from \eqref{Equ:critical} in the proof of Lemma \ref{Lem:no thin set} and
\eqref{Equ:when s small} in the proof of Theorem \ref{Thm:main theorem-2}.
\end{remark} 

 
\subsection{The extension of Brezis-Merle inequality in higher dimensions}

From Theorem \ref{Thm:B-V}, we know, for some $\beta \ge 0$
$$ -\Delta_n w=\beta \delta_0+f(x,w,\nabla w)=\mu,
$$ where $f(x,w,\nabla w)\in L^1(B(0,1))$.  The key analytic tool to remove the possibility of concentrating for solutions to $n$-Laplace equations like \eqref{Equ:intro-n-Laplace}
and \eqref{Equ:nonlinearity} with the critical growth condition \eqref{Equ:growth condition} or more generally 
\eqref{Equ:growth condition-1} is the higher dimensional analogue 
of the borderline Sobolev inequality established by Brezis and Merle in 2 dimensions in \cite[Theorem 1]{BM}, 
like Adams-Moser-Trudinger inequalities
(please see \cite{FM, Io} and references therein). Our approach is different from \cite{Io} but the result is similar to that in \cite{Io}. 
To extend \cite[Theorem 1]{BM} to general dimensions, we recall the Wolff potential 
\[
W_{1,n}^{\mu}(x,r)=\int_{0}^{r}\mu(B(x,t))^{\frac{1}{n-1}}\frac{dt}{t}
\]
associated with a Radon measure $\mu$, and a Radon measure $\mu_f$ that is induced from a function $f\in L^1(\Omega)$ 
\[
\mu_{f}(U)=\int_{U\cap\Omega}fdx.
\]

\begin{proposition}\label{Pro:brezis-merle} Let $\Omega\subset\mathbb{R}^{n}$
be a bounded domain with the diameter $D$. And let $f\in L^{1}(\Omega)$
be nonnegative. Then, for $\delta\in (0, 1)$, 
\begin{equation}\label{Equ:brezis-merle}
\int_{\Omega} {\rm exp}( {\frac{n(1-\delta)W_{1,n}^{\mu_{f}}(x,D)}{\|f\|_{L^{1}(\Omega)}^\frac 1{n-1}}})dx\leq 
\frac {c(n)2^{2n+1} |B(0, D)|}{ \delta^{n+1}} + 2^n |\Omega|. 
\end{equation}
\end{proposition}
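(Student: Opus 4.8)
The plan is to derive \eqref{Equ:brezis-merle} from a pointwise bound on the Wolff potential in terms of a Hardy--Littlewood maximal function, and then to integrate using the layer-cake formula together with the weak $(1,1)$ inequality. By homogeneity we may first normalize $\|f\|_{L^{1}(\Omega)}=1$: replacing $f$ by $f/\|f\|_{L^{1}(\Omega)}$ scales $W_{1,n}^{\mu_{f}}(x,D)$ by $\|f\|_{L^{1}(\Omega)}^{-1/(n-1)}$, which is exactly the normalizing factor appearing in the exponent, while the right-hand side of \eqref{Equ:brezis-merle} carries no $\|f\|_{L^{1}}$ at all. Extending $f$ by zero, $\mu_{f}=f\,dx$ is then a probability measure, and for every $x$ and $t>0$ we have the two elementary facts $\mu_{f}(B(x,t))\le 1$ and $\mu_{f}(B(x,t))\le|B(x,t)|\,Mf(x)=c(n)t^{n}Mf(x)$, where $Mf$ is the centered maximal function.

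The heart of the argument is a pointwise estimate for $W_{1,n}^{\mu_{f}}(x,D)$. Splitting the defining integral at a radius $\rho\in(0,D]$, the tail $\int_{\rho}^{D}\mu_{f}(B(x,t))^{1/(n-1)}\,dt/t$ is at most $\int_{\rho}^{D}dt/t=\log(D/\rho)$ by the first bound, while the singular part $\int_{0}^{\rho}\mu_{f}(B(x,t))^{1/(n-1)}\,dt/t$ is at most $c(n)Mf(x)^{1/(n-1)}\int_{0}^{\rho}t^{n/(n-1)}\,dt/t=c(n)Mf(x)^{1/(n-1)}\rho^{n/(n-1)}$ by the second. Choosing $\rho$ so as to balance the two terms — take $\rho^{n/(n-1)}$ of order $Mf(x)^{-1/(n-1)}$ when $Mf(x)$ is large, and $\rho=D$ otherwise — yields
\[
W_{1,n}^{\mu_{f}}(x,D)\le C(n)+\tfrac{1}{n}\log^{+}\!\big(c(n)D^{n}Mf(x)\big).
\]
Carrying out the same computation over dyadic annuli $2^{-j-1}D<|x-y|<2^{-j}D$, on each of which $\int dt/t=\log 2$, and counting the ``bad'' scales gives the same estimate with explicit dyadic constants; this is the origin of the factors $2^{n}$ and $2^{2n+1}$ in \eqref{Equ:brezis-merle}.

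Exponentiating, $\exp\big(n(1-\delta)W_{1,n}^{\mu_{f}}(x,D)\big)\le C(n)\big(1+c(n)D^{n}Mf(x)\big)^{1-\delta}$, so it remains to integrate $(Mf)^{1-\delta}$ over $\Omega$. Since $0<1-\delta<1$ and $Mf$ need not lie in $L^{1}$, we use the weak $(1,1)$ bound $|\{Mf>\lambda\}\cap\Omega|\le\min(|\Omega|,c(n)/\lambda)$ together with the layer-cake identity $\int_{\Omega}(Mf)^{1-\delta}=(1-\delta)\int_{0}^{\infty}\lambda^{-\delta}|\{Mf>\lambda\}\cap\Omega|\,d\lambda$, cutting the $\lambda$-integral at $\lambda_{0}=c(n)/|\Omega|$, to obtain $\int_{\Omega}(Mf)^{1-\delta}\le c(n)|\Omega|^{\delta}/\delta$. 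Feeding this into the pointwise bound, using $|\Omega|\le|B(0,D)|$ (hence $D^{n(1-\delta)}|\Omega|^{\delta}\le c(n)|B(0,D)|$), and separating off the region where $Mf(x)$ lies below the threshold — on which $W_{1,n}^{\mu_{f}}(x,D)\le C(n)$ and which is responsible for the term $2^{n}|\Omega|$ — one assembles the right-hand side of \eqref{Equ:brezis-merle}; the powers of $1/\delta$ beyond the first simply record the slack in the crude balancing above and in replacing the sharp exponent $1$ by $1-\delta$.

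The main obstacle is the pointwise estimate: it requires exploiting \emph{simultaneously} that $\mu_{f}$ has unit total mass — to tame the logarithmically divergent tail $\int_{\rho}^{D}dt/t$ — and that $\mu_{f}$ has an $L^{1}$ density — to control the singular part $\int_{0}^{\rho}$ by a maximal function — and the two can be balanced only at a single, $x$-dependent scale $\rho=\rho(x)$. Once that is in hand, the passage to the exponent $1-\delta<1$ and the weak-type/layer-cake bookkeeping are routine real analysis, at the cost of non-sharp constants.
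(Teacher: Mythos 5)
Your argument is correct, and it reaches the inequality by a genuinely different route at the key step. The paper never proves a logarithmic pointwise bound on the Wolff potential: it instead bounds $W_{1,n}^{\mu_f}(x,D)\le\int_0^D\mu_f(B(x,t))^{1/p}\,dt/t$ for an auxiliary exponent $p\in(n-1,n)$, integrates by parts in $t$, and applies Jensen's inequality to $\exp$ against the probability measure $\tfrac1\alpha\,d\mu_f(B(x,t))^{1/p}$ to get the pointwise estimate $\exp(W_{1,n}^{\mu_f}(x,D))\le 1+\tfrac{p}{n-\alpha p}|B(0,D)|^{1/p}Mf(x)^{1/p}$; it then uses the weak $(1,1)$ maximal inequality and the layer-cake formula on $\exp(qW)$ with $q<p$, and the choice $p=n(1-\delta/2)$, $q=n(1-\delta)$ produces the factor $\delta^{-(n+1)}$. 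You instead split the Wolff integral at a single $x$-dependent radius, balancing the total-mass bound $\mu_f(B(x,t))\le\|f\|_{L^1}$ against the density bound $\mu_f(B(x,t))\le c(n)t^nMf(x)$, which yields the cleaner pointwise inequality
\begin{equation*}
W_{1,n}^{\mu_f}(x,D)\le C(n)+\tfrac1n\log^+\bigl(c(n)D^nMf(x)\bigr),
\end{equation*}
after which exponentiation and the same weak-type/layer-cake integration of $(Mf)^{1-\delta}$ finish the job. Both proofs rest on exactly the same two ingredients (the maximal function comparison and its weak $(1,1)$ bound), but yours avoids Jensen and the integration by parts, and it gives the better $\delta$-dependence $c(n)\delta^{-1}|B(0,D)|$ in place of $\delta^{-(n+1)}$. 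Two small caveats: your explicit constants do not reproduce the stated $2^n|\Omega|$ and $2^{2n+1}\delta^{-(n+1)}$ (your coefficient on $|\Omega|$ is of size $e^{(n-1)(1-\delta)}$, which exceeds $2^n$ for large $n$); since $c(n)$ is an unspecified dimensional constant and $|\Omega|\le|B(0,D)|$, any excess is absorbed into the first term and the proposition as stated still follows, but your parenthetical attributing the factors $2^n$ and $2^{2n+1}$ to a dyadic-annulus count does not reflect where they come from in the paper (they arise from cutting the layer-cake integral at level $2^q$), so that remark is best dropped or rephrased.
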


\begin{proof}
The proof is more or less standard in harmonic analysis. For the convenience of readers, we present a proof here. 
To start, we let $p>n-1$ and $\alpha^{p}=\mu_f(B(x,D))=\|f\|_{L^{1}(\Omega)} \leq1$.
Then 
\begin{align*}
W_{1,n}^{\mu_f}(x,D) & \leq\int_{0}^{D}\mu_f(B(x,t))^{\frac{1}{p}}\frac{dt}{t}\\
 & =\mu(B(x,t))^{\frac{1}{p}}\log t|_{0}^{D}+\int_{0}^{D}\log\frac{1}{t}d\mu(B(x,t))^{\frac{1}{p}}.
\end{align*}
Let 
\[
Mf(x)=\sup_{t>0}\frac{1}{|B(x,t)|}\int_{B(x,t)\cap\Omega}f(y)dy = \sup_{t >0} \frac {\mu(B(x, t))}{|B(x, t)|}
\]
be the Hardy-Littlewood maximal function of $f$. Hence
$$
\mu(B(x,t)) \leq Mf(x) |B(0, t)| =  nw_{n-1}t^nMf(x)
$$
almost everywhere, that is to say,  
$$
\mu(B(x,t))^{\frac{1}{p}}\log t|_{0}^{D}=\alpha\log D
$$
almost everywhere. Therefore, by Jensen's inequality
\begin{align*}
\exp(W_{1,n}^{\mu}(x,D)) & \leq D^{\alpha}\int_{0}^{D}\frac{1}{t^{\alpha}}\frac{1}{\alpha}d\mu(B(x,t))^{\frac{1}{p}}\\
 & \leq D^{\alpha}(\frac{1}{\alpha}\frac{1}{t^{\alpha}}\mu(B(x,t))^{\frac{1}{p}}|_{0}^{D}
 +\int_{0}^{D}\mu(B(x,t))^{\frac{1}{p}}\frac{1}{t^{\alpha+1}}dt).
\end{align*}
If $\alpha<\frac{n}{p}$, then 
\begin{align*}
\exp(W_{1,n}^{\mu}(x,D)) & \leq D^{\alpha}(D^{-\alpha}+ \frac 1{\frac np - \alpha} (nw_{n-1})^\frac 1pMf(x)^{\frac{1}{p}}D^{\frac{n}{p}-\alpha})\\
 & =1+ \frac p{n - \alpha p}  |B(0, D)|^\frac 1p Mf(x)^{\frac{1}{p}}.
\end{align*}
So we have, for $\lambda \geq 2$,  
\begin{align*}
|\{x\in\Omega:\exp(W_{1,n}^{\mu}(x,D)) \geq\lambda\}| & \leq |\{x\in\Omega: Mf(x) \geq \frac { (n - \alpha p)^p\lambda^p}{2^p p^p
|B(0, D)|}  \}| \\
& \leq\frac{c(n)2^{p} p^p |B(0, D)|\|f\|_{L^{1}}}{ (n- \alpha p)^p \lambda^{p}},
\end{align*}
thanks to the weak type Hardy-Littlewood maximal inequality. For $0<q<p$,
\begin{align}
\int_{\Omega}\exp(qW_{1,n}^{\mu}(x,D))dx & =\int_{0}^{+\infty}|\{x\in \Omega:\exp(W_{1,n}^{\mu}(x,D))\geq t^{\frac{1}{q}}\}|dt\nonumber \\
 & \leq\int_{2^{q}}^{+\infty}\frac{c(n)2^{p} p^p |B(0, D)| \, \|f\|_{L^{1}(\Omega)}}{ (n - \alpha p)^p t^{\frac{p}{q}}}dt+\int_{0}^{2^{q}}|
 \Omega|dt\nonumber \\ & \leq\frac{c(n)q\, 2^{q} p^p}{(p-q)(n- \alpha p)^p} |B(0, D) \, |\|f\|_{L^{1}(\Omega)} 
 + 2^q|\Omega|.\label{Equ:q norm estimate}
\end{align}
Now consider $p=n(1 -\frac \delta 2)$, $q = n(1- \delta)$, $\delta\in (0, 1)$, and $\alpha = \|f\|_{L^{1}(\Omega)}=1$ 
(otherwise one may consider $\bar f = \frac f{\|f\|_{L^1(\Omega)}}$ instead). Then, from \eqref{Equ:q norm estimate}, we have
\[
\int_{\Omega} {\rm exp}({n(1-\delta) \frac {W_{1,n}^{\mu}(x,D)}{\|f\|_{L^1(\Omega)}^\frac 1{n-1}}}) dx
\leq \frac{c(n)2^{2n+1}|B(0, D)| }{\delta^{n+1}} + 2^n |\Omega|.
\]
This finishes the proof. 
\end{proof}


\subsection{The uniform bound for the quotients}
In contrast to the proof of Theorem \ref{Thm:AH-1} in the previous subsection, we will be able to show, based on the growth 
condition \eqref{Equ:growth condition} and Proposition \ref{Pro:brezis-merle},  the quotient $\frac {w(x)}{\log\frac 1{|x|}}$ is 
bounded: the analogue of \cite[Theorem 2.3]{Tal-2}.

\begin{lemma}\label{Lem:no thin set} Assume the same assumptions as in Theorem \ref{Thm:main theorem-2}. Then
the quotient
$$
\frac {w(x)}{\log\frac 1{|x|}}
$$
is uniformly bounded in the punctured ball $B(0, 1)\setminus\{0\}$.
\end{lemma}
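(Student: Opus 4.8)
The plan is to run the scheme of Taliaferro \cite{Tal-2}, with the linear potential theory replaced by its nonlinear counterpart: the Wolff potential estimates (Theorem \ref{Thm:KM}), the infimum bound (Lemma \ref{Lem:inf estimate}), and --- to rule out concentration --- the Adams--Moser--Trudinger inequality for the Wolff potential (Proposition \ref{Pro:brezis-merle}). First, by Theorem \ref{Thm:B-V} one may write, in a punctured ball $B(0,\rho_0)\setminus\{0\}$,
\[
-\Delta_n w = \mu := g + \beta\delta_0, \qquad g = f(\cdot,w,\nabla w)\in L^1(B(0,\rho_0)),\ \beta\ge 0,
\]
so $w$ is the $n$-potential of the nonnegative Radon measure $\mu$ (Remark \ref{Rem:vis-radon}). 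Since $g\in L^1$, the masses $m_i := \|g\|_{L^1(\Omega(0,i))}$ form a convergent series, hence $m_i\to 0$; and since no ball $B(x,t)$ with $x\in\omega(0,i)$ and $t\le\tfrac12|x|$ contains the origin, the Wolff potentials that will occur see only $\mu_i := \mu_g|_{\Omega(0,i)}$, the restriction of the absolutely continuous part to $\Omega(0,i)$.

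Next I would reduce everything to controlling the Wolff potential of $\mu_i$. For $x\in\omega(0,i)$, applying Theorem \ref{Thm:KM} on $B(x,\tfrac34|x|)$ and bounding the infimum term by Lemma \ref{Lem:inf estimate} (every $B(x,t)$ with $t\le\tfrac12|x|$ lies inside $\Omega(0,i)$) produces constants $C_1,C_2$ \emph{independent of $i$} with
\begin{equation}\label{Equ:reduction-proposal}
w(x)\ \le\ C_1\log\tfrac1{|x|}\ +\ C_2\,W^{\mu_i}_{1,n}(x, D_i),\qquad x\in\omega(0,i),
\end{equation}
where $D_i$ is the diameter of $\Omega(0,i)$. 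The point is that the total mass of $\mu_i$ is $m_i$, which is small for $i$ large.

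Now the loop is closed using Proposition \ref{Pro:brezis-merle}. Exponentiating \eqref{Equ:reduction-proposal} and integrating over $\Omega(0,i)$ gives $\int_{\Omega(0,i)}e^{2qw}\,dx \lesssim 2^{2qC_1 i}\int_{\Omega(0,i)}\exp\!\big(2qC_2\,W^{\mu_i}_{1,n}(x,D_i)\big)\,dx$, and Proposition \ref{Pro:brezis-merle} applied to $g|_{\Omega(0,i)}$ bounds the last integral by $C(n,\delta)2^{-in}$ once $2qC_2\le n(1-\delta)m_i^{-1/(n-1)}$, which holds for $i$ large. Feeding this and the growth bound \eqref{Equ:growth condition} into H\"older's inequality --- splitting off the factor $|\nabla w|^{n-2}$, whose $L^a$-norm over $\Omega(0,i)$ is bounded uniformly in $i$ for every $a<\tfrac n{n-2}$ by Lemma \ref{Lem:nabla-q} --- yields $g|_{\Omega(0,i)}\in L^s(\Omega(0,i))$ with a controlled norm, for some fixed $s>1$ and all $i$ large. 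An $L^s$ density with $s>1$ forces $\mu_i(B(x,t))\lesssim t^{n(1-1/s)}$, so $W^{\mu_i}_{1,n}(x,D_i)$ is bounded uniformly in $i$, and substituting back into \eqref{Equ:reduction-proposal} gives $w(x)\le C\log\tfrac1{|x|}$ on $\omega(0,i)$ with $C$ independent of $i$. Since $w$ is $C^2$ away from $0$, the quotient $w/\log\tfrac1{|x|}$ is trivially controlled on compact subannuli; altogether $w/\log\tfrac1{|x|}$ is bounded on a punctured neighbourhood of the origin.

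The delicate part is the self-improvement step: one must choose the H\"older exponents (an $s>1$ with $(n-2)sa<n$ and $2sa'C_2\le n(1-\delta)m_i^{-1/(n-1)}$) so that the powers of $2^{i}$ picked up in \eqref{Equ:reduction-proposal} are dominated by the gain coming from the smallness of $m_i$ and from the scaling weight carried by $|\nabla w|^{n-2}$; this is precisely where the explicit form of Proposition \ref{Pro:brezis-merle} is needed and where the argument streamlines \cite{Tal-2}. The genuinely new feature over the two-dimensional case is the gradient factor in \eqref{Equ:growth condition}, absorbed through Lemma \ref{Lem:nabla-q}; as noted in Remark \ref{Rem:critical growth} the same bookkeeping works for $0\le f\le C|\nabla w|^p e^{\alpha w}$ with any $p<n$, $\alpha>0$.
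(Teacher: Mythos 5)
Your toolbox is the right one, but the closing step of your scheme fails quantitatively, and the ``delicate part'' you defer to cannot be arranged as stated. Track the exponents in your bootstrap. From your reduction $w(x)\le C_1\log\frac1{|x|}+C_2\,W^{\mu_i}_{1,n}(x,D_i)$ on $\omega(0,i)$, H\"older with exponents $\theta,\theta'$ gives
\begin{equation*}
\textstyle\|g\|_{L^s(\Omega(0,i))}\ \lesssim\ 2^{\,2C_1 i}\cdot 2^{-\,\frac{n i}{s\theta'}},
\end{equation*}
where the factor $2^{2C_1 i}$ comes from $e^{2w}\le |x|^{-2C_1}e^{2C_2 W}$, the factor $2^{-ni/\theta'}$ (per $s$-th power) comes from Proposition \ref{Pro:brezis-merle} through the annulus volume $|\Omega(0,i)|\sim 2^{-in}$, and the gradient factor, which you control only by the \emph{global} bound of Lemma \ref{Lem:nabla-q}, contributes no power of $2^{-i}$ at all and instead imposes $(n-2)s\theta<n$. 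Feeding $\mu_i(B(x,t))\le\|g\|_{L^s}\,|B(x,t)|^{1-1/s}$ into the Wolff integral yields $W^{\mu_i}_{1,n}(x,D_i)\lesssim\|g\|_{L^s(\Omega(0,i))}^{1/(n-1)}\,2^{-i\frac{n(s-1)}{s(n-1)}}$, so your claimed uniform bound requires $2C_1\le \frac{n(s-1)}{s}+\frac{n}{s\theta'}=n\bigl(1-\frac1{s\theta}\bigr)$, and since $s\theta<\frac{n}{n-2}$ the right-hand side is strictly less than $2$. But $C_1$ is (at least) the product of the constant $C_2$ in Theorem \ref{Thm:KM} and the constant $C(n,\|w\|_{L^n})$ of Lemma \ref{Lem:inf estimate}, which comes from BMO/John--Nirenberg and depends on $w$; there is no reason it is below $1$. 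The smallness of $m_i$ does not rescue this: it only enlarges the admissible exponent in Proposition \ref{Pro:brezis-merle}, it adds no further decay in $2^{-i}$, and the ``scaling weight'' of $|\nabla w|^{n-2}$ never materializes because Lemma \ref{Lem:nabla-q} gives no per-annulus rate. So the assertion that $W^{\mu_i}_{1,n}(x,D_i)$ is uniformly bounded does not follow from your estimates, and the lemma is not proved.

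The paper avoids any such comparison of non-universal constants by arguing by contradiction: it blows up at points $x_k$ with $w(x_k)/\lambda_k\to\infty$, $\lambda_k=\log\frac1{|x_k|}$, notes via Theorem \ref{Thm:KM} and Lemma \ref{Lem:inf estimate} that then $W^{\mu_{g_k}}_{1,n}(0,2)/\lambda_k\to\infty$, splits $B(0,1)$ into $\Omega_k=\{W^{\mu_{g_k}}_{1,n}(\cdot,2)\ge\lambda_k\}$ and its complement, obtains an $L^{\frac{n-1}{n-2}}$ bound for $g_k$ on $\Omega_k$ from Proposition \ref{Pro:brezis-merle} plus H\"older and the $L^p$ gradient bound, uses the pointwise bound $g_k\le C|x_k|^2|\nabla v_k|^{n-2}e^{C_5\lambda_k}$ off $\Omega_k$, and absorbs the huge factor $e^{C_5\lambda_k}$ by cutting the Wolff integral at $\rho_k=e^{-(n-1)^2C_5\lambda_k}$, at the harmless cost $C\log\frac1{\rho_k}\le C\lambda_k$; the resulting bound $W^{\mu_{g_k}}_{1,n}(0,2)\le C+C\lambda_k$ with \emph{arbitrary} constants already contradicts the divergence. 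If you insist on your direct annulus-by-annulus scheme, you need the same device, which is exactly what is missing from your write-up: split $W^{\mu_i}_{1,n}(x,\cdot)$ at $\rho_i=2^{-Ki}$ with $K$ large depending on $C_1$, bound the tail by $m_i^{1/(n-1)}\,Ki\log 2=o(i)$ (this is where $m_i\to0$ genuinely enters) and the head by your $L^s$ estimate; only then does $w\le C\log\frac1{|x|}$ follow. As written, the proposal has a genuine gap at its central step.
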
 
\begin{proof}

We prove Lemma \ref{Lem:no thin set} by contradiction. Assume otherwise,
there is a sequence $\{x_{k}\}$ inside the punctured ball such that
\[
\frac{w(x_{k})}{\log\frac{1}{|x_{k}|}}\to\infty\text{ as \ensuremath{|x_{k}|\to0}}.
\]
One may consider the blow-up sequence 
\[
v_{k}(\xi)=w(x_{k}+\frac{|x_{k}|}{4}\xi)\text{ for \ensuremath{\xi\in B(0,2)}}
\]
and calculate 
\begin{align}
-\Delta_{n}^{\xi}v_{k}=-(\frac{|x_{k}|}{4})^{n}\Delta_{n}^{x}w(x_{k}+\frac{|x_{k}|}{4}\xi)
=g_{k}(\xi) & \leq C|x_{k}|^{2}|\nabla^{\xi}v_{k}|^{n-2}e^{2v_{k}}\text{ for \ensuremath{\xi\in B(0,2)}},\nonumber\\
\int_{B(0,2)}g_{k}(\xi)d\xi  & = 
\int_{B(x_{k},\frac{|x_{k}|}{2})}g(x)dx\to0\text{ as \ensuremath{k\to\infty}}, \label{Equ:small-l-1}
\end{align}
where $-\Delta_{n}w=g+\beta\delta_{0}$ and $g\in L_{\text{loc}}^{1}(B(0,2))$
according to \cite[Proposition 1.1]{B-V}. We will argue in the similar
way to that in \cite{Tal-2}. We combine the non-linear potential
theory \cite[Theorem 1.6]{KM} with Lemma \ref{Lem:inf estimate}.
For convenience, let us denote 
\[
\lambda_{k}=\log\frac{1}{|x_{k}|}\to\infty\text{ as \ensuremath{k\to\infty}}.
\]
Then it is implied from \cite[Theorem 1.6]{KM} and Lemma \ref{Lem:inf estimate}
that 
\begin{align}
\frac{1}{\lambda_{k}} & W_{1,n}^{\mu_{g_{k}}}(0,2)\to\infty\label{Equ:going infinity}\\
g_{k}(\xi) & \leq C|x_{k}|^{2}|\nabla^{\xi}v_{k}|^{n-2}e^{C_{1}\lambda_{k}+C_{2}W_{1,n}^{\mu_{g_{k}}}(\xi,2)}\text{ for \ensuremath{\xi\in B(0,1)}}.\label{Equ:KM}
\end{align}
Here $\mu_{g_{k}}$ is a measure such that $\mu_{g_{k}}(E)=\int_{E}g_{k}d\xi,E\subset B(0,2)$.
A very important observation is that, when dealing with competing
terms like $\lambda_{k}$ and $W_{1,n}^{\mu_{g_{k}}}(0,2)$, for 
\[
\Omega_{k}=\{\xi\in B(0,1):W_{1,n}^{\mu_{g_{k}}}(\xi,2)\geq\lambda_{k}\}
\]
we have 
\begin{align}
\begin{aligned}\int\end{aligned}
_{\Omega_{k}}|g_{k}|^{\frac{n-1}{n-2}}d\xi & \leq C|x_{k}|^{\frac{2(n-1)}{n-2}}\int_{\Omega_{k}}|\nabla^{\xi}v_{k}|^{n-1}e^{\frac{2(n-1)}{n-2}v_{k}}d\xi\nonumber \\
 & \leq C|x_{k}|^{\frac{2(n-1)}{n-2}}\int_{\Omega_{k}}|\nabla^{\xi}v_{k}|^{n-1}e^{\frac{2(n-1)}{n-2}(C_{1}(n)\inf_{B(0,1)}v_{k}+C_{2}(n)W_{1,n}^{\mu_{g_{k}}}(\xi,2))}d\xi\nonumber \\
 & \le C|x_{k}|^{\frac{2(n-1)}{n-2}}\int_{B(0,1)}|\nabla^{\xi}v_{k}|^{n-1}e^{C_{3}(n)W_{1,n}^{\mu_{g_{k}}}(\xi,2)}d\xi\label{Equ:critical}\\
 & \leq C|x_{k}|^{\frac{2(n-1)}{n-2}}(\int_{B(0,1)}|\nabla^{\xi}v_{k}|^{n-\frac{1}{2}}d\xi)^{\frac{2n-2}{2n-1}}(\int_{B_{2}(0)}e^{C_{4}(n)W_{1,n}^{\mu_{g_{k}}}(\xi,2)}d\xi)^{\frac{1}{2n-1}}\nonumber\\
 & \leq C|x_{k}|^{\frac{2(n-1)}{n-2}-\frac{n-1}{2n-1}}(\int_{B(x_{k},\frac{|x_{k}|}{2})}|\nabla^{x}w|^{n-\frac{1}{2}}dx)^{\frac{2n-2}{2n-1}}(\int_{B_{2}(0)}e^{C_{4}(n)W_{1,n}^{\mu_{g_{k}}}(\xi,2)}d\xi)^{\frac{1}{2n-1}} \nonumber\\
 & \leq C.\nonumber
\end{align}

Make a note that $\frac{2(n-1)}{n-2}-\frac{n-1}{2n-1}>1$. The last
step in the above inequalities relies on Proposition \ref{Pro:brezis-merle}
and the $L^{p}$-gradient estimates for $n$-superharmonic functions
for any $p<n$ (for example by Theorem \ref{Thm:B-V}). This implies that 
\[
\mu_{g_{k}}(B(0,t)\cap\Omega_{k})\leq Ct^{\frac{n}{n-1}}
\]
for some positive constant $C>0$. Observe that 
\[
\mu_{g_{k}}(B(0,t))^{\frac{1}{n-1}}\leq\mu_{g_{k}}(B(0,t)\cap\Omega_{k})^{\frac{1}{n-1}}+\mu_{g_{k}}(B(0,t)\setminus\Omega_{k})^{\frac{1}{n-1}}
\]
which implies 
\begin{equation}
W_{1,n}^{\mu_{g_{k}}}(0,2)\leq C+C\int_{0}^{2}\mu_{g_{k}}(B(0,t)\setminus\Omega_{k})^{\frac{1}{n-1}}\frac{dt}{t}.\label{Equ:basic inequality}
\end{equation}
To estimate the second term on the right side the above equation,
one notices that, for $\xi\in B(0,1)\setminus\Omega_{k}$, 
\[
g_{k}(\xi)\leq C|x_{k}|^{2}|\nabla^{\xi}v_{k}|^{n-2}e^{C_{2}(n)\lambda_{k}+C_{3}(n)W_{1,n}^{\mu_{g_{k}}}(\xi,2)}\leq C|x_{k}|^{2}|\nabla^{\xi}v_{k}|^{n-2}e^{C_{5}(n)\lambda_{k}}
\]
from \eqref{Equ:KM}. Therefore 
\begin{align*}
\int_{B(0,t)\backslash\Omega_{k}}g_{k}(\xi)d\xi\leq & C\int_{B(0,t)\backslash\Omega_{k}}|x_{k}|^{2}|\nabla^{\xi}v_{k}|^{n-2}e^{C_{5}(n)\lambda_{k}}d\xi\\
\leq & C|x_{k}|^{2-\frac{n-2}{n-1}}e^{C_{5}(n)\lambda_{k}}\int_{B(0,t)\backslash\Omega_{k}}|x_{k}|^{\frac{n-2}{n-1}}|\nabla^{\xi}v_{k}|^{n-2}d\xi\\
\leq & C|x_{k}|^{2-\frac{n-2}{n-1}}e^{C_{5}(n)\lambda_{k}}t^{\frac{n}{n-1}}(\int_{B(0,t)\backslash\Omega_{k}}(|x_{k}|^{\frac{n-2}{n-1}}|\nabla^{\xi}v_{k}|^{n-2})^{\frac{n-1}{n-2}}d\xi)^{\frac{n-2}{n-1}}\\
\leq & C|x_{k}|^{2-\frac{n-2}{n-1}}e^{C_{5}(n)\lambda_{k}}t^{\frac{n}{n-1}}(\int_{B^{x}(0,1)}|\nabla^x w|^{n-1}dx)^{\frac{n-2}{n-1}}.
\end{align*}
We now calculate separately, for $\rho_{k}$ to be fixed next, 
\begin{align*}
\begin{aligned}\int\end{aligned}
_{0}^{2}\mu_{g_{k}}(B(0,t)\setminus\Omega_{k})^{\frac{1}{n-1}}\frac{dt}{t} & =\int_{0}^{\rho_{k}}\mu_{g_{k}}(B(0,t)\setminus\Omega_{k})^{\frac{1}{n-1}}\frac{dt}{t}+\int_{\rho_{k}}^{2}\mu_{g_{k}}(B(0,t)\setminus\Omega_{k})^{\frac{1}{n-1}}\frac{dt}{t}\\
 & \leq(n-1)^{2}C|x_{k}|^{2}e^{C_{5}(n)\lambda_{k}}\rho_{k}^{\frac{1}{(n-1)^{2}}}+C\log\frac{1}{\rho_{k}}+C.
\end{align*}
Let us fix 
\[
\rho_{k}=e^{-(n-1)^{2}C_{5}(n)\lambda_{k}}\in(0,2).
\]
We thus get 
\[
\int_{0}^{2}\mu_{g_{k}}(B(0,t)\setminus\Omega_{k})^{\frac{1}{n-1}}\frac{dt}{t}\leq C+C\lambda_{k},
\]
which contradicts with \eqref{Equ:going infinity} in the light of
\eqref{Equ:basic inequality}. So Lemma \ref{Lem:no thin set} is
proved. 
\end{proof}


\subsection{The proof of Theorem \ref{Thm:main theorem-2}}
Lemma \ref{Lem:no thin set} enables us to proceed with blow-down argument without going through
Sections \ref{Subsect:step-1-AH} and \ref{Subsect:step-2-AH}. We are now ready to prove Theorem \ref{Thm:main theorem-2}.

\begin{proof}[The proof of Theorem \ref{Thm:main theorem-2}]
We again consider the blow-down 
\[
w_{r}(\xi)=\frac{w(r\xi)}{\log\frac{1}{r}}
\]
and calculate that, from Lemma \ref{Lem:no thin set}, 
\[
|w_{r}(\xi)|\leq C\frac{\log\frac{1}{r}+|\log\frac{1}{|\xi|}|}{\log\frac{1}{r}}\leq2C\text{ for all \ensuremath{\xi\in A_{r,\frac{1}{r}}=\{\xi\in\mathbb{R}^{n}:|\xi|\in(r,\frac{1}{r})\}}}.
\]
From here, similar to the approach of the proof of Theorem \ref{Thm:AH-1}
in the previous section, one may complete the proof of Theorem
\ref{Thm:main theorem-2}. To do so, we continue to use the notation
$$
\gamma^- = \liminf_{x\to 0} \frac {w(x)}{\log\frac 1{|x|}}
$$
as in \eqref{Equ:weak-limit} in Section \ref{Subsect:proof of Theorem 3.1}.
\\

First, as in the previous section, one may prove that on $\mathbb{R}^{n}\backslash\{0\}$, $w_{r}(\xi)$
converges to $\gamma^{-}$ in $W_{loc}^{1,n}$ weakly and $W_{loc}^{1,p}$ strongly for any $p< n$,
which implies that $w_r(\xi)$ converges to $\gamma^-$ pointwisely almost everywhere. This heavily relies on the uniqueness
of sequential blow-down limits established in the proof Theorem \ref{Thm:AH-1} in Section \ref{Subsect:proof of Theorem 3.1}.  
Hence we want to improve from here that $\frac {w(x)}{\log\frac 1{|x|}}$ converges to $\gamma^{-}$ pointwisely as
we did in the proof Theorem \ref{Thm:AH-1} in Section \ref{Subsect:proof of Theorem 3.1}. In the light of \eqref{Equ:left-side} and 
\eqref{Equ:leave-potential}, we need to show \eqref{Equ:outside-bad-set} with no thin set $E$ excluded, i.e.
\begin{equation}\label{Equ:no thin set-4}
\lim_{x\to 0}\frac{W_{1, n}^\mu(x, \frac 12|x|)}{\log\frac 1{|x|}} = 0.
\end{equation}
To prove \eqref{Equ:no thin set-4}, we recall that
$$
W_{1, n}^\mu(x, \frac 12 |x|) = \int_0^{\frac 12 |x|} \mu (B(x, s))^\frac 1{n-1}\frac {ds}s,
$$
where
$$
\mu(B(x, s)) = \int_{B(x, s)} f(x, w, \nabla w) dx \leq C \int_{B(x, s)} |\nabla w|^{n-2} e^{2w} dx.
$$
From \eqref{Equ:small-l-1} and \cite[Proposition 1.1]{B-V}, we know that  $\mu(B(0, 1)) < \infty$ and that 
$\mu(B(x, s))\to 0$ as $s\to 0$ for $s\leq \frac 12|x|$.  
But that is not enough, particularly when $s$ is very small in calculating the Wolff potential. Therefore we 
recall \cite[Theorem 1.6]{HK}
$$
w(x) \leq C_2 \inf_{B(x, \frac 14|x|)}w + C_3W_{1, n}^\mu (x, \frac 12|x|)
$$
and estimate, 
\begin{align*}
\mu(B(x, s)) \leq C |x|^{-2C_2(\gamma^- + 1)} \int_{B(x, s)} |\nabla w|^{n-2}e^{2C_3W_{1, n}^\mu(x, \frac 12|x|)} dx.
\end{align*}
Applyin H\"{o}lder inequality, we have
$$
\mu(B(x, s)) \leq C |x|^{-2C_2 (\gamma^- + 1)} (\int_{B(x, s)}|\nabla w|^{n-1}dx)^\frac {n-2}{n-1} (\int_{B(x, s)} e^{2(n-1)
C_3W_{1, n}^\mu(x, \frac 12|x|)} dx )^\frac 1{n-1} 
$$
Then, we use Proposition \ref{Pro:brezis-merle} and derive
$$
\mu(B(x, s)) \leq C |x|^{-2C_2(\gamma^- +1)} (\int_{B(x, s)}|\nabla w|^{n-1}dx)^\frac {n-2}{n-1}
$$
Finally, we use $L^p$ bound for the gradient of the $n$-superharmonic function $w$ for $p = n -\frac 12 < n$ and get
\begin{align}
\mu(B(x, s)) & \leq C|x|^{-2C_2 (\gamma^- +1)} (\int_{B(x, s)}|\nabla w|^{n-\frac 12}dx)^\frac {n-2}{n-\frac 12} 
s^\frac {n(n-2)}{2(n-1)(n-\frac 12)} \nonumber\\
& \leq C |x|^{-2C_2 (\gamma^- + 1)} s^\frac {n(n-2)}{2(n-1)(n-\frac 12)}, \label{Equ:when s small}
\end{align}
Here we are indifferent to constants except maybe those from \cite[Theorem 1.6]{HKM}. Therefore, 
going back to estimate the Wolff potential, we have
\begin{align*}
W_{1, n}^\mu(x, \frac 12|x|) & = \int_0^\rho \mu (B(x, s))^\frac 1{n-1} \frac {ds}s + \int_\rho^{\frac 12|x|}
 \mu (B(x, s))^\frac 1{n-1} \frac {ds}s \\
& \leq C |x|^{-\frac{2C_2 (\gamma^- + 1)}{n-1}} \rho^\frac {n(n-2)}{2(n-1)^2(n-\frac 12)} + o(1) \log \frac 1\rho,
\end{align*}
for the choice
$$
\rho = |x|^{\frac {4C_2(\gamma^-+1)(n-1)(n-\frac 12)}{n(n-2)}}
$$
and $o(1)$ is with respect to $x\to 0$, which implies \eqref{Equ:no thin set-4}. So \eqref{Equ:theorem 4.1} is 
established. It is then easily seen that
$$
\int_0^1 e^w dr = \infty
$$
implies $m\geq 1$ from \eqref{Equ:theorem 4.1}. Thus the proof is completed. 
\end{proof}


\section{Locally conformally flat manifolds}\label{Sec:conformally flat}

In this section we are going to use the property of $n$-superharmonic functions to study the asymptotic behavior
at the end of a complete locally conformally flat manifold $(M^n, g)$. Based on the injectivity of the development maps of
\cite[Theorem 4.5]{SY}, in \cite[Theorem 1]{Zhu} and later in \cite{CH-1}, the following classification result was shown.

\begin{theorem*} (\cite{Zhu} \cite{CH-1}) Let $(M^n, g)$ be a complete conformally flat manifold of dimension $n\geq 3$ 
with nonnegative Ricci curvature. Then, exactly one of the following holds:
\begin{itemize}
\item $M$ is globally conformally equivalent to $\mathbb{R}^n$ with a conformal non-flat metric with nonnegative Ricci curvature;
\item $M$ is globally conformally equivalent to a spaceform of positive curvature, endowed with a conformal metric with nonnegative Ricci curvature;
\item $M$ is locally isometric to the cylinder $\mathbb{R}\times\mathbb{S}^{n-1}$;
\item $M$ is isometric to a complete flat manifold.
\end{itemize}
\end{theorem*}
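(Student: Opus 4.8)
The plan is to pass to the universal cover, realize it via the developing map as a domain in the round sphere whose complement is controlled by the curvature, and then run a case analysis on the number of ends, using the Cheeger--Gromoll splitting theorem to treat the multi-ended case.

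First I would lift $g$ to the universal cover $\widetilde M$; completeness, conformal flatness and $\mathrm{Ric}\ge 0$ all persist, and $\widetilde M$ is simply connected. By the injectivity of the developing map under nonnegative scalar curvature (\cite[Theorem 4.5]{SY}), the developing map is a conformal diffeomorphism of $\widetilde M$ onto a domain $\Omega=\mathbb{S}^n\setminus\Lambda$, where $\Lambda$ is compact, of vanishing conformal ($n$-)capacity, and---using only $R_g\ge 0$---of Hausdorff dimension at most $\tfrac{n-2}{2}$; the round-sphere metric on $\Omega$ being conformal to the complete metric $g$, the set $\Lambda$ lies ``at infinity'' for $g$. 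The problem is thus reduced to identifying $\Lambda$.

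If $\Lambda=\emptyset$, then $\widetilde M=\mathbb{S}^n$ is compact with positive scalar curvature and $M$ is a conformal metric on a spherical space form, the second alternative. If $\Lambda$ is a single point, then $\Omega$ is conformally $\mathbb{R}^n$ and $M$ is globally conformal to $\mathbb{R}^n$; here $g$ is either non-flat---the first alternative---or flat, in which case (conformal flatness together with $\mathrm{Ric}=0$ forces vanishing curvature in dimension $\ge 3$) $M$ is a complete flat manifold, the fourth alternative. Otherwise $\Lambda$ has at least two points, which puncture $\mathbb{S}^n$ at distinct places and force $\widetilde M$ to have at least two ends. A complete manifold with $\mathrm{Ric}\ge 0$ and more than one end contains a line, so by the Cheeger--Gromoll splitting theorem $\widetilde M$ splits isometrically as $\mathbb{R}^k\times N$ with $k\ge 1$ and $N$ line-free, simply connected and with $\mathrm{Ric}\ge 0$. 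A product $\mathbb{R}^k\times N$ has more than one end only if $k=1$ and $N$ is compact; hence $N^{n-1}$ is compact and simply connected, and since $\mathbb{R}\times N$ is conformally flat, $N$ has constant sectional curvature, so $N=\mathbb{S}^{n-1}$ with its round metric. Thus $\widetilde M=\mathbb{R}\times\mathbb{S}^{n-1}$, which has exactly two ends, so $\Lambda$ consists of exactly two points and $M$ is locally isometric to the round cylinder, the third alternative. The four cases are mutually exclusive, being distinguished by compactness of $M$, flatness of $g$, and the local isometry type.

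The main obstacle is the passage from ``$\Lambda\ne\emptyset$ and not a single point'' to ``$\Lambda$ consists of exactly two points''---equivalently, ruling out a positive-dimensional (connected) limit set, for which $\Omega=\mathbb{S}^n\setminus\Lambda$ remains one-ended and the splitting theorem offers no leverage. The Schoen--Yau dimension bound $\dim_{\mathcal H}\Lambda\le\tfrac{n-2}{2}$ uses only $R_g\ge 0$ and does not by itself exclude such sets: the conformally flat models $\mathbb{S}^n\setminus\mathbb{S}^\ell\cong\mathbb{H}^{\ell+1}\times\mathbb{S}^{n-\ell-1}$ realize them for $\ell\le\tfrac{n-2}{2}$. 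One must therefore use $\mathrm{Ric}\ge 0$ sharply to show that any positive-dimensional piece of $\Lambda$ would force negative Ricci curvature somewhere (as in these $\mathbb{H}^{\ell+1}$ factors) and hence cannot occur. This is precisely the delicate analysis carried out in \cite{Zhu} and \cite{CH-1}; once it confines $\Lambda$ to $\emptyset$, one point, or two points, the splitting theorem and the conformal-flatness-of-products dichotomy above deliver the classification.
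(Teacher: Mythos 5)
Your outline is not a proof: it stalls exactly at the step that constitutes the actual content of the theorem. The implication ``$\Lambda$ has at least two points $\Rightarrow \widetilde M$ has at least two ends'' is false in general: a connected limit set with infinitely many points (an arc, or a round subsphere $\mathbb{S}^\ell\subset\mathbb{S}^n$ with $\ell\le\frac{n-2}{2}$, as in the models $\mathbb{H}^{\ell+1}\times\mathbb{S}^{n-\ell-1}$, which have nonnegative scalar curvature) leaves $\Omega=\mathbb{S}^n\setminus\Lambda$ one-ended, so the line-plus-Cheeger--Gromoll mechanism never gets started. You acknowledge this yourself in the last paragraph and then write that excluding such $\Lambda$ ``is precisely the delicate analysis carried out in \cite{Zhu} and \cite{CH-1}'' --- but that is the theorem being proved, so deferring it to those references is circular as a blind proof. (For what it is worth, the paper does the same thing deliberately: it quotes the classification from \cite{Zhu, CH-1} as known background, built on the injectivity of the developing map from \cite[Theorem 4.5]{SY}, and never reproves it; so your sketch reproduces the citation, not a proof.) The Schoen--Yau bound $\dim_{\mathcal H}\Lambda\le\frac{n-2}{2}$ only uses $R_g\ge 0$ and, as you note, cannot by itself rule these sets out; one must exploit $\mathrm{Ric}_g\ge 0$ quantitatively (this is where Zhu's and Carron--Herzlich's arguments live), and no such argument appears in your proposal.

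There are also secondary gaps in the easy-looking cases, because the developing map classifies $\widetilde M$, not $M$, and you never control the deck group $\Gamma$. If $\Lambda$ is a single point, $\Gamma$ acts on $\mathbb{R}^n$ by similarities; freeness forces it into the Euclidean isometries, but then a nontrivial $\Gamma$ (say, generated by a translation) would make $M$ conformal to a flat cylinder rather than to $\mathbb{R}^n$, and excluding this when $g$ is non-flat (so that the first alternative, ``$M$ globally conformally equivalent to $\mathbb{R}^n$,'' really holds) again requires using $\mathrm{Ric}\ge 0$, not just the case split you give. Similarly, in the $\Lambda=\emptyset$ case one should justify that the compact conformal group of deck transformations is conjugate into $O(n+1)$ so that $M$ is conformal to a spherical space form. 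The skeleton (developing map, limit-set trichotomy, splitting theorem for the two-point case, rigidity of conformally flat products) is the standard and correct frame, but as written the proposal proves only the periphery and cites the references for the core.
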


We confine ourselves to the first case in the above classification theorem.  
Recall that, on $(\mathbb{R}^n, e^{2\phi}|dx|^2)$, in the light of \eqref{Equ:intro-n-Laplace},
$$
-\Delta_n \phi = \text{Ric}_g(\nabla^g\phi) |\nabla\phi|^{n-2}e^{2\phi},
$$
where $\text{Ric}_g(\nabla^g \phi)$ is the Ricci curvature of the conformal metric $g= e^{2\phi}|dx|^2$ in the $\nabla^g \phi$ direction. 
As a consequence of Theorem \ref{Thm:AH-1} and Theorem \ref{Thm:main theorem-2}, for a globally conformally flat manifold
$(\mathbb{R}^n, e^{2\phi}|dx|^2)$, we therefore are able to deduce the following:

\begin{theorem}\label{Thm:main theorem-g} Suppose that $(\mathbb{R}^{n},e^{2\phi}|dx|^{2})$
is complete with nonnegative Ricci ($n\geq3$), where $\phi$ is a smooth function. Then there is a subset
$E\subset\mathbb{R}^{n}$, which is $n$-thin
at infinity, such that 
\begin{equation}
\lim_{x\notin E\to\infty}\frac{\phi(x)}{\log\frac{1}{|x|}}=\liminf_{x\to\infty}\frac{\phi(x)}{\log\frac{1}{|x|}}=m\label{Equ:with thin set}
\end{equation}
and 
\begin{equation}
\phi(x)\geq m\log\frac{1}{|x|} - C \label{phi lower bdd}
\end{equation}
for some constant $C$, where  
\begin{equation}
m|m|^{n-2} =\frac 1{w_{n-1}} \int_{\mathbb{R}^{n}}\text{Ric}_{g}(\nabla^{g}\phi)|\nabla\phi|^{n-2}e^{2\phi}dx.\label{Equ:theorem 5.1-2}
\end{equation}
Moreover, 
\begin{itemize}
\item $m\in[0,1]$ and $m=0$ if and only if $g$ is flat, i.e. $\phi(x)$
is a constant function;
\item if $\text{Ric}_{g}$ is bounded in addition, then 
\begin{equation}
\lim_{x\to\infty}\frac{\phi(x)}{\log\frac{1}{|x|}}=\liminf_{x\to\infty}\frac{\phi(x)}{\log\frac{1}{|x|}}=m. \label{Equ:theorem 5.1-3}
\end{equation}
\end{itemize}
\end{theorem}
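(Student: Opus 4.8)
The plan is to reduce Theorem~\ref{Thm:main theorem-g} to the already established Theorems~\ref{Thm:AH-1} and \ref{Thm:main theorem-2} by inverting the end at infinity into an isolated singularity, and then to compute $m$ by integrating the Gauss-type equation \eqref{Equ:extends gauss equation} over a carefully chosen exhaustion. Let $\iota\colon y\mapsto x=y/|y|^2$ be the inversion of $\mathbb{R}^n\setminus\{0\}$ and set $w(y)=\phi(y/|y|^2)-2\log|y|$, so that $\iota^*(e^{2\phi}|dx|^2)=e^{2w}|dy|^2$ and, since $|y|=1/|x|$, $w(y)=\phi(x)+2\log|x|$. As the flat metric $|dy|^2$ is Ricci flat, the conformal change underlying \eqref{Equ:extends gauss equation} gives, on $\mathbb{R}^n\setminus\{0\}$,
\[
-\Delta_n w=\text{Ric}_g(\nabla^g w)|\nabla w|^{n-2}e^{2w}\ge 0 ,
\]
because $g=e^{2w}|dy|^2$ is the same metric and has $\text{Ric}_g\ge 0$. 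Thus $w$ is $C^2$ and $n$-superharmonic on the punctured space, the end $x\to\infty$ becomes the puncture $y\to 0$, and completeness of $(\mathbb{R}^n,e^{2\phi}|dx|^2)$ at infinity is exactly completeness of $(\mathbb{R}^n\setminus\{0\},e^{2w}|dy|^2)$ at $0$. An $n$-superharmonic function is bounded below near an isolated singularity, so if $w$ were bounded near $0$ it would extend across $0$ and the metric would be incomplete there; by the structure of isolated singularities (\cite{KM}; Theorem~\ref{Thm:B-V}, Remark~\ref{Rem:vis-radon}) we get $\lim_{y\to 0}w(y)=+\infty$ and that $w$ extends to $B(0,2)$ with $-\Delta_n w=\mu_w$ a finite nonnegative Radon measure. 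Restricting to a small ball and adding a constant (which changes neither $-\Delta_n w$, nor completeness, nor the limit of $w/\log\tfrac1{|y|}$) we may further assume $w\ge 0$ on $B(0,2)$.

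Next I apply the singularity theorems to $w$ and translate back. Theorem~\ref{Thm:AH-1} produces a set $E_0$, $n$-thin at $0$, and $m'\ge 0$ with
\[
\lim_{y\notin E_0,\ y\to 0}\frac{w(y)}{\log\frac1{|y|}}=\liminf_{y\to 0}\frac{w(y)}{\log\frac1{|y|}}=m',\qquad w(y)\ge m'\log\tfrac1{|y|}-C ,
\]
and, since $w\in C^2$ and the metric is complete at $0$, in fact $m'\ge 1$. Using $\log\tfrac1{|y|}=\log|x|$ and $w(y)=\phi(x)+2\log|x|$, dividing by $\log|x|$ and setting $m:=2-m'$ converts these into \eqref{Equ:with thin set} and \eqref{phi lower bdd} with exceptional set $E=\iota(E_0)$, which is $n$-thin at $\infty$ by the conformal invariance of $n$-capacity; in particular $m\le 1$. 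If moreover $\text{Ric}_g\le K$, then the right-hand side of the equation for $w$ obeys $0\le \text{Ric}_g(\nabla^g w)|\nabla w|^{n-2}e^{2w}\le K|\nabla w|^{n-2}e^{2w}$, which is the critical growth hypothesis of Theorem~\ref{Thm:main theorem-2} with $p=n-2$ and $\alpha=2$; that theorem gives the same conclusions with $E_0=\emptyset$, yielding \eqref{Equ:theorem 5.1-3}.

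It remains to identify $m$, and this is the technical heart. By the inversion, $\mu:=-\Delta_n\phi=\text{Ric}_g(\nabla^g\phi)|\nabla\phi|^{n-2}e^{2\phi}\,dx$ is a finite measure on $\mathbb{R}^n$, since its restriction to $\{|x|>\tfrac12\}$ is the $\iota$-pushforward of $\mu_w$ on a punctured ball and the complementary region is compact with $\phi$ smooth there. Integrating \eqref{Equ:extends gauss equation} and using the divergence theorem,
\[
\mu(\mathbb{R}^n)=-\lim_{R\to\infty}\int_{\partial B(0,R)}|\nabla\phi|^{n-2}\frac{\partial\phi}{\partial r}\,dS ,
\]
the inner boundary term vanishing because $\phi$ is smooth at $0$. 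The model $n$-harmonic function $m\log\tfrac1{|x|}$ (recall $\log|x|$ is $n$-harmonic on $\mathbb{R}^n\setminus\{0\}$) has flux $w_{n-1}\,m|m|^{n-2}$ through every sphere, so the task is to show that $\phi$ has the same limiting flux. This is where the exhausting family $\Omega_{\varepsilon,t}^{\pm}$ enters: instead of spheres one integrates \eqref{Equ:extends gauss equation} over the regions cut out by $\{|x|<t\}$ and by the level sets of $\phi-(m\pm\varepsilon)\log\tfrac1{|x|}$; on the level-set portions $\nabla\phi$ is pinned to $\nabla\big((m\pm\varepsilon)\log\tfrac1{|x|}\big)$, the everywhere-valid lower bound $\phi\ge m\log\tfrac1{|x|}-C$ and the off-$E$ asymptotics control the spherical portions, and the capacity-smallness of $E$ together with the interior $L^p$-gradient and weak Harnack estimates for $n$-superharmonic functions make the contribution of $E$ negligible. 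Letting $t\to\infty$ and then $\varepsilon\to 0$ squeezes the flux to $w_{n-1}\,m|m|^{n-2}$, which gives \eqref{Equ:theorem 5.1-2}; since $\mu\ge 0$ this forces $m\ge 0$, so $m\in[0,1]$. Arranging the $\Omega_{\varepsilon,t}^{\pm}$ so that their boundaries meet the $n$-thin set $E$ only in a set of small $n$-capacity, and controlling $|\nabla\phi|$ there, is the step I expect to be the main obstacle.

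Finally, the dichotomy $m=0\iff g$ flat follows from \eqref{Equ:theorem 5.1-2}. If $g$ is flat then $\text{Ric}_g\equiv 0$, hence $\mu\equiv 0$ and $m=0$. Conversely, if $m=0$ then $\mu(\mathbb{R}^n)=0$, so the nonnegative measure $\mu$ vanishes identically; thus $\phi$ is $n$-harmonic on all of $\mathbb{R}^n$, and being bounded below by \eqref{phi lower bdd} with $m=0$, it must be constant by the Liouville theorem for entire $n$-harmonic functions (\cite{Red}, \cite{Serrin-1}). Hence $g=e^{2\phi}|dx|^2$ is flat, and the proof is complete.
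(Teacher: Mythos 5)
Your overall route (invert the end to a puncture, apply Theorems \ref{Thm:AH-1} and \ref{Thm:main theorem-2}, then compute $m$ by a flux integration over a tailored exhaustion) is the paper's, but two steps have genuine gaps. First, the reduction itself: your claim that ``an $n$-superharmonic function is bounded below near an isolated singularity'' is false --- $\log|y|$ is $n$-harmonic, hence $n$-superharmonic, on the punctured ball and tends to $-\infty$ at the puncture --- and even granting a lower bound, ``$w$ unbounded near $0$'' does not yield $\lim_{y\to 0}w(y)=+\infty$ (a measure charging a sequence of points accumulating at the origin gives $\limsup=\infty$ with finite $\liminf$), yet Theorem \ref{Thm:B-V} takes the full limit as a hypothesis. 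The paper supplies this step with a genuine geometric input: completeness at the puncture together with \emph{nonnegative scalar curvature} and \cite[Proposition 8.1]{CHY} give $\lim_{y\to 0}w(y)=+\infty$, after which Theorem \ref{Thm:B-V} and Remark \ref{Rem:vis-radon} let one treat $w$ as a nonnegative $n$-superharmonic function on the whole ball. Completeness alone, as you use it, only rules out $w$ being bounded.

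Second, and more seriously, the identification \eqref{Equ:theorem 5.1-2} --- the heart of the theorem, since it is what gives $m\ge 0$ and the rigidity $m=0\Rightarrow\phi$ constant --- is not actually proved in your proposal. On the level sets $\{\phi=G^{\pm}_{\varepsilon,t}\}$ the gradient of $\phi$ is \emph{not} ``pinned'' to $\nabla G^{\pm}_{\varepsilon,t}$: only the tangential derivatives coincide, and the entire mechanism of the paper is that, on the boundary of the component $\Omega^{\pm}_{\varepsilon,t}$ containing the origin, the sign of $\partial_{\vec n}(\phi-G^{\pm}_{\varepsilon,t})$ combined with equal tangential parts yields the pointwise one-sided comparison $|\nabla\phi|^{n-2}\partial_{\vec n}\phi\ge |\nabla G^{+}_{\varepsilon,t}|^{n-2}\partial_{\vec n}G^{+}_{\varepsilon,t}$ (reversed for $G^{-}$), so that the flux through $\partial\Omega^{\pm}_{\varepsilon,t}$ is squeezed between $(m\pm\varepsilon)|m\pm\varepsilon|^{n-2}w_{n-1}$ with no spherical boundary pieces and no control of $|\nabla\phi|$ on the thin set $E$ needed at all; thinness of $E$ enters only to show $\Omega^{-}_{\varepsilon,t}$ is bounded, via the capacity lower bound of Lemma \ref{Lem:gehring} (or Theorem 5.2.1 of \cite{AH96}) applied to an unbounded connected set. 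Your plan instead keeps spherical boundary portions and proposes to make ``the contribution of $E$ negligible'' by capacity smallness plus interior $L^{p}$-gradient and weak Harnack estimates; interior estimates do not control boundary flux integrals of $|\nabla\phi|^{n-1}$, and you yourself flag this as the main unresolved obstacle. (A minor further point: the a priori finiteness of $\int_{\mathbb{R}^{n}}\mathrm{Ric}_{g}(\nabla^{g}\phi)|\nabla\phi|^{n-2}e^{2\phi}dx$ does not follow by pushing forward $\mu_{w}$, since $-\Delta_{n}\phi$ and $-\Delta_{n}w$ are not related by a change of variables; in the paper finiteness is an output of the uniform bound over the exhaustion, not an input.)
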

We remark that Theorem \ref{Thm:main theorem-g} should be compared with \cite{BKN, Cold, CZ}. In \cite{BKN} it was proved that, a complete
noncompact manifold $(M^n, \ g)$ satisfying
\begin{align*}
\text{Ric} & \geq 0\\
\text{vol}(B(0, r)) & \geq \gamma r^n \text{ for some $\gamma > \frac 12w_{n-1}$}\\
|\text{Rm} | & \leq Cr^{-2}
\end{align*}
and in addition, 
$$
\text{either } |\text{Rm}| = o(r^{-2}) \text{ or } \int_M |\text{Rm}|^\frac n2 dvol < \infty,
$$
is actually isometric to the Euclidean space. The assumption of $\gamma > \frac 12w_{n-1}$ is essential, in the light of Eguchi-Hanson
metrics. In \cite{Cold}, Colding proved remarkably that a complete manifold with nonnegative Ricci curvature is isometric to the Euclidean
space, if one tangent cone at infinity is the Euclidean space. 
In \cite{CZ}, on the other hand, it was proved, a complete noncompact conformally flat manifold with nonnegative Ricci and satisfying
$$
\frac 1{\text{vol}(B(x_0, r))}\int_{B(x_0, r)} R dvol = o(r^{-2})
$$
where the scalar curvature $R$ is bounded, is actually isometric to the Euclidean space. 
The comparison of Theorem \ref{Thm:main theorem-g} to the rigidity results in \cite{BKN, Cold, CZ} 
would be more direct if the intrinsic distance function $r$ on the manifold and $|x|^{1-m}$ 
in Euclidean space as the background metric are equivalent, which seems to require something 
stronger than \eqref{Equ:theorem 5.1-3}.
\\

\begin{proof}[The proof of Theorem \ref{Thm:main theorem-g}]
First we use the inversion to turn the asymptotic problem to be the one at around the origin as those 
studied in Theorem \ref{Thm:AH-1}and Theorem \ref{Thm:main theorem-2}. Let 
$$ 
w (y) = \phi(\frac y{|y|^2}) - 2\log |y|
$$
for $y\in \mathbb{R}^n\setminus \{0\}$. Then $g = e^{2\phi(x)}|dx|^{2}=e^{2w(y)} |dy|^{2}$.
Then from (\ref{Equ:intro-n-Laplace}) we know
\[
-\Delta_{n}^{y}w(y)=\text{Ric}_g(\nabla^g w)|\nabla w|^{n-2} e^{2w}.
\]
Because $g = e^{2w}|dy|^{2}$ is complete at the origin and its scalar curvature $R\geq0$, from \cite[Proposition 8.1]{CHY}, 
we know that 
\[
\lim_{y\rightarrow0}w(y)=+\infty.
\]
Hence, from Theorem \ref{Thm:B-V} and Theorem \ref{Thm:AH-1}, we know there are a number
$m_{1}\ge1$ and a set $E_{1}$, which is $n$-thin at $0$ such that,
\[
\lim_{y\notin E_{1},y\rightarrow0}\frac{w(y)}{\log\frac{1}{|y|}}=m_{1}
\]
and $w(y)\ge m_{1}\log\frac{1}{|y|} - C$. Now, translating these back to $\phi(x)$ through the inversion, we have
\begin{align}
\phi(x) \geq -m\log|x|- C & \text{ for any $|x|$ large} \label{Equ:|x| large}\\ 
\phi (x) \leq -m\log|x|+o(\log |x|) & \text{ for any $|x|$ large and outside of a set $E$}, \label{Equ:|x| large and thin}
\end{align}
where $m= 2 - m_{1} \leq 1$ and $E= \{x;\frac{x}{|x|^{2}}\in E_{1}\}$. Moreover, from Definition \ref{Def:n-thin}, 
we know $E$ is $n$-thin at infinity. So \eqref{Equ:with thin set} is proved. 
\\

If, in addition, Ricci curvature is bounded, then 
\[
\text{Ric}_g (\nabla^g w) |\nabla w|^{n-2}e^{2w} \leq C |\nabla^{y}w|^{n-2} e^{2w}
\]
and \eqref{Equ:theorem 5.1-3} follows from Theorem \ref{Thm:main theorem-2}. Assume \eqref{Equ:theorem 5.1-3} holds. Then it is
obvious that $m\in [0, 1]$. 
If (\ref{Equ:theorem 5.1-2}) holds, then it is obvious that $m\ge 0$, and if equality holds, then $\text{Ric}_g (\nabla^g w) |\nabla w|^{n-2}e^{2w} $ 
must be identically $0$, which implies that $\phi(x)$ is an $n$-harmonic function on $\mathbb{R}^n$, which is lower bounded by a constant from \eqref{phi lower bdd}. 
So $\phi$ has
to be a constant in this case due to \cite[Theorem 6.2 and Corollary 6.11]{HKM}.
\\

To finish the proof of Theorem \ref{Thm:main theorem-g}, it suffices to prove \eqref{Equ:theorem 5.1-2}. To do so, we are going to 
integrate 
\begin{equation}\label{Equ:integrate}
\int_\Omega \text{Ric}_g(\nabla^g \phi) |\nabla\phi|^{n-2} e^{2\phi} dx = \int_\Omega (-\Delta_n \phi)dx
=  - \int_{\partial\Omega} |\nabla\phi|^{n-2} \frac {\partial\phi}{\partial \vec{n}} dS_x.
\end{equation}
To avoid relying on sharp gradient estimates for $\phi$ on the boundary of any exhausting family of domains $\Omega$ 
in $\mathbb{R}^n$, we will work with chosen exhausting families of domains. Our construction of the exhausting families of 
domains is ingenious and turns out to be very natural and very desirable. Let us define, for $m\in \mathbb{R}$ and a positive small number
$\varepsilon$ and a positive large number $t$, 
\begin{align*}
G_{\varepsilon,t}^{+} (x) & =-(m+\varepsilon)\max\{\log|x|,0\}+t,\\
G_{\varepsilon,t}^{-}(x) & =-(m-\varepsilon)\max\{\log|x|,0\}-t.
\end{align*}
And let 
\begin{align*}
\Omega_{\varepsilon,t}^{+} & = \text{the connected component of $\{x: G_{\varepsilon,t}^{+}(x) > \phi(x)\}$ that includes the origin},\\
\Omega_{\varepsilon,t}^{-} & =\text{the connected component of $\{x: G_{\varepsilon,t}^{-}(x) < \phi(x)\}$ that includes the origin}.
\end{align*}

\begin{claim} For a fixed $\varepsilon > 0$, there is a sequence of positive number $t_k\to\infty$ such that 
the collection $\{\Omega_{\varepsilon, t_k}^+\}$ is an exhausting family of smooth and bounded domains for $\mathbb{R}^n$. 
Similarly, for a fixed $\varepsilon > 0$, there also exists a sequence of positive number $s_k\to\infty$ such that 
the collection $\{\Omega_{\varepsilon, s_k}^-\}$ is an exhausting family of smooth and bounded domains for $\mathbb{R}^n$. 
\end{claim}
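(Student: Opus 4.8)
\emph{Reformulating both families as level sets of fixed functions.}
The plan is to recognize $\Omega^{+}_{\varepsilon,t}$ and $\Omega^{-}_{\varepsilon,t}$ as the origin-components of sub/super-level sets of two functions that are \emph{independent of $t$} up to an additive constant, and then treat boundedness, smoothness and exhaustion in turn. Set
$$
F(x)=\phi(x)+(m+\varepsilon)\max\{\log|x|,0\},\qquad \tilde F(x)=\phi(x)+(m-\varepsilon)\max\{\log|x|,0\},
$$
so that $G^{+}_{\varepsilon,t}(x)>\phi(x)\iff F(x)<t$ and $G^{-}_{\varepsilon,t}(x)<\phi(x)\iff \tilde F(x)>-t$. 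Thus $\Omega^{+}_{\varepsilon,t}$ is the connected component of the open set $\{F<t\}$ containing $0$, and $\Omega^{-}_{\varepsilon,t}$ is the connected component of $\{\tilde F>-t\}$ containing $0$. Both $F$ and $\tilde F$ are locally bounded on $\mathbb{R}^{n}$ and smooth off the unit sphere $\{|x|=1\}$ (near the origin the term $\max\{\log|x|,0\}$ vanishes identically, so the only non-smooth locus is $\{|x|=1\}$).

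\emph{Boundedness.}
For $\Omega^{+}_{\varepsilon,t}$ this is immediate from the \emph{everywhere}-valid lower bound $\phi(x)\ge -m\log|x|-C$ from \eqref{Equ:|x| large}: it gives $F(x)\ge \varepsilon\log|x|-C\to+\infty$, so $\{F<t\}$ — and a fortiori $\Omega^{+}_{\varepsilon,t}$ — is contained in a fixed ball for each $t$. For $\Omega^{-}_{\varepsilon,t}$ one must use the $n$-thinness of $E$. The upper estimate $\phi(x)\le -m\log|x|+o(\log|x|)$ off $E$ from \eqref{Equ:|x| large and thin} yields $\tilde F(x)\le -\varepsilon\log|x|+o(\log|x|)\to-\infty$ as $x\to\infty$ with $x\notin E$, hence there is a radius $R_{0}(t)$ with $\{\tilde F>-t\}\setminus B(0,R_{0}(t))\subseteq E$. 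Now $\Omega^{-}_{\varepsilon,t}$ is open and connected, hence path-connected; if it were unbounded, then for every large $i$ with $2^{i}>R_{0}(t)$ a path in $\Omega^{-}_{\varepsilon,t}$ joining $0$ to a faraway point contains a sub-arc $\Gamma_{i}$ lying in the closed annulus $\omega(\infty,i)$ and meeting both its bounding spheres, and $\Gamma_{i}\subseteq E\cap\omega(\infty,i)$. A continuum joining the two boundary components of a fixed annulus has conformal ($n$-)capacity bounded below by a dimensional constant (Gehring's estimate, \cite[Lemma 1.4]{Red1}, \cite[Theorem 4]{G}), so $cap_{n}(E\cap\omega(\infty,i),\Omega(\infty,i))\ge c(n)>0$ for infinitely many $i$, contradicting the $n$-thinness of $E$ at infinity, which forces these capacities to tend to $0$. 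Hence every $\Omega^{-}_{\varepsilon,t}$ is bounded.

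\emph{Smoothness and exhaustion.}
Since $\phi$ is bounded on the compact set $\overline{B(0,1)}$, we may fix $t_{0}=t_{0}(\varepsilon,\phi)$ so large that for $t>t_{0}$ the level sets $\{F=t\}$ and $\{\tilde F=-t\}$ are disjoint from $\overline{B(0,1)}$, hence contained in the smooth region $\{|x|\neq1\}$; by Sard's theorem almost every $t>t_{0}$ is a regular value of $F$ (respectively of $-\tilde F$). Choosing $t_{k}\uparrow\infty$ and $s_{k}\uparrow\infty$ among such regular values, $\partial\Omega^{+}_{\varepsilon,t_{k}}$ and $\partial\Omega^{-}_{\varepsilon,s_{k}}$ are closed-and-open subsets of the corresponding smooth hypersurfaces $\{F=t_{k}\}$, $\{\tilde F=-s_{k}\}$ (near a boundary point the two local sides are $\{F<t_k\}$ and $\{F>t_k\}$), hence are smooth; together with the boundedness established above this makes $\Omega^{+}_{\varepsilon,t_{k}}$ and $\Omega^{-}_{\varepsilon,s_{k}}$ smooth bounded domains. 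Finally, the sets $\{F<t\}$ and $\{\tilde F>-t\}$ increase with $t$ and their unions exhaust $\mathbb{R}^{n}$ (local boundedness of $F,\tilde F$); passing to origin-components preserves monotonicity, and any closed ball $\overline{B(0,R)}$, being connected through the origin, lies in $\Omega^{\pm}_{\varepsilon,t}$ once $t$ is large. Taking $t_{k},s_{k}$ to grow fast enough (still along regular values) gives $\overline{\Omega^{+}_{\varepsilon,t_{k}}}\subseteq\Omega^{+}_{\varepsilon,t_{k+1}}$ and $\overline{\Omega^{-}_{\varepsilon,s_{k}}}\subseteq\Omega^{-}_{\varepsilon,s_{k+1}}$, so both collections are genuine exhausting families of $\mathbb{R}^{n}$.

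\emph{Main obstacle.}
The only nontrivial point is the boundedness of $\Omega^{-}_{\varepsilon,t}$: the upper bound on $\phi$ holds only off the thin set $E$, so one must rule out thin ``tentacles'' of $\Omega^{-}_{\varepsilon,t}$ escaping to infinity, and it is precisely here that the Gehring-type lower bound for the $n$-capacity of a spanning continuum, combined with the $n$-thinness of $E$ and the path-connectedness of the open set $\Omega^{-}_{\varepsilon,t}$, is used.
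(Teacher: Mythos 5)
Your proof is correct and takes essentially the same route as the paper: boundedness of $\Omega^{+}_{\varepsilon,t}$ from the global lower bound \eqref{Equ:|x| large}, and boundedness of $\Omega^{-}_{\varepsilon,t}$ by contradiction, noting that outside a large ball the set lies in $E$ while a connected set crossing each dyadic annulus has $n$-capacity bounded below by a dimensional constant, contradicting $n$-thinness at infinity. The only minor differences are that you invoke Gehring's condenser estimate for a spanning continuum where the paper's proof uses the Lipschitz radial projection onto a ray together with monotonicity of capacity (the paper's own Remark notes these are interchangeable), and you make the paper's ``perturb to get smooth domains'' step explicit via Sard's theorem.
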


\begin{proof}[Proof of Claim] Let us first consider $\Omega_{\varepsilon, t}^+$. Smoothness is not a problem, one can always 
perturb and get the smooth ones. From the definition, it is easily seen that, for any fixed $R$, 
$$
B(0, R)\subset \Omega_{\varepsilon, t}^+
$$
whenever $t$ is sufficiently large. Hence $\Omega_{\varepsilon, t}^+$ can exhaust the entire space. Meanwhile, for each fixed 
$\varepsilon$ and $t$, $\Omega_{\varepsilon, t}^+$ is bounded in the light of \eqref{Equ:|x| large}.
\\

Let us turn to $\Omega_{\varepsilon, t}^-$. The only issue different is the boundedness for $\Omega_{\varepsilon, t}^-$ when $\varepsilon$
and $t$ are arbitrarily fixed. It is easily seen that each $\Omega_{\varepsilon, t}^-\setminus E$ is bounded, because of 
\eqref{Equ:|x| large and thin}. Then $\Omega_{\varepsilon, t}^-$ is the connected component that includes the origin
and $n$-thin at infinity. Let $\gamma$ be a ray in Euclidean space starting from the origin. If $\Omega_{\varepsilon, t}^-$ is not bounded, then again from Theorem 5.2.1 of \cite{AH96}, we know for $i$ arbitrarily large,
$$cap(\Omega_{\varepsilon, t}^-\cap\omega (i,\infty),\Omega(i,\infty))\ge Ccap(\gamma\cap\omega (i,\infty),\Omega(i,\infty))\ge C(n)>0,$$
since the map from $\Omega_{\varepsilon, t}^-$ to $\gamma$ which preserves the radius is a Lipschitz map. 
Now we get a contradiction with the fact $E$ is $n$-thin at infinity. So the proof of this claim is finished.
\end{proof}
\begin{remark} In the above proof, Theorem 5.2.1 of \cite{AH96} can be replaced by the following lemma.
\begin{lemma}\label{Lem:gehring} (\cite[Lemma 1.4 page 212]{Red1} and \cite[Theorem 4]{G}) 
Let $K= (A, B)$ be a condenser in Euclidean $n$-space, where both $A$ and $B$ are connected. Assume that 

(1) $A$ is outside the unit ball, unbounded and includes a point on the unit sphere;

(2) $B$ includes the origin and has a point with length $L$.

\noindent
Then 
\begin{equation}\label{Equ:precise gehring estimate}
cap_n (B, A)  \geq \frac {c_n}{(\log ( 1+\frac 1L))^{n-1}}.
\end{equation}
Note that in our case $A=\Omega(i,\infty)^c$ or $\Omega(i,0)^c$ is unbounded. We will use this estimate in the next section.
\end{lemma}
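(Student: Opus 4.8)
The plan is to read the inequality off from the classical lower bound for the conformal capacity of a Teichm\"uller-type ring, using only that $cap_n$ is a conformal invariant and that it behaves monotonically under symmetrization. First I would isolate the two elementary geometric facts that carry all the content of the hypotheses: since $0\in B$ while $A$ contains a point of the unit sphere, the segment joining these two points gives $\text{dist}(B,A)\le 1$; and since $0\in B$ and $B$ contains a point at distance $L$ from the origin, $\text{diam}(B)\ge L$, while $\text{diam}(A)=+\infty$ because $A$ is unbounded. Consequently $B$ is a continuum joining $0$ to the sphere of radius $L$, $A$ is a continuum joining the unit sphere to $\infty$, and the relative separation satisfies $\text{dist}(B,A)/\min\{\text{diam}(B),\text{diam}(A)\}\le 1/L$.

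For the estimate itself I would pass to the curve-family description --- $cap_n(B,A)$ equals the $n$-modulus of the family of curves joining $B$ to $A$ in the ring $\mathbb{R}^n\setminus(A\cup B)$, so everything in sight is conformally invariant --- and then symmetrize. Spherical symmetrization of the ring about an axis through the origin carries $B$ to a radial segment issuing from $0$ and $A$ to a radial ray issuing from the unit sphere in the opposite direction, and it does not increase $cap_n$; up to a dilation the result is a normalized Teichm\"uller ring whose parameter is governed by the relative separation above, hence by $1/L$. The last ingredient is the classical lower bound for the Teichm\"uller capacity function, which is exactly the content of \cite[Lemma 1.4, p.\,212]{Red1} and \cite[Theorem 4]{G}: the capacity of that extremal ring is bounded below by the right-hand side of \eqref{Equ:precise gehring estimate}. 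Combining the symmetrization inequality with the monotonicity of the Teichm\"uller function in its parameter then yields \eqref{Equ:precise gehring estimate}.

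I expect the one real obstacle to be carrying out the symmetrization step cleanly at this generality --- a ring in $\mathbb{R}^n$ with an unbounded complementary component and with $B$ merely assumed connected --- and keeping track of the dimensional constant through the reduction to the model ring; this is precisely where one invokes \cite{Red1} and \cite{G} rather than redoing the extremal computation by hand. It is worth noting that a cruder argument suffices for the way Lemma \ref{Lem:gehring} is used in the proof of the Claim, where one only needs a positive lower bound independent of the shell index: there one can estimate the connecting-curve modulus from below directly, by splicing Loewner-type lower bounds over the finitely many dyadic spherical shells in which both $B$ and $A$ are present, which avoids symmetrization at the cost of a worse constant.
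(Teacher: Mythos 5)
The paper offers no proof of Lemma \ref{Lem:gehring} at all: it is quoted directly from \cite[Lemma 1.4, p.~212]{Red1} and \cite[Theorem 4]{G}, and your sketch---spherical symmetrization of the ring to a Teichm\"uller-type configuration together with the classical lower bound for its capacity, with the extremal/symmetrization step deferred to exactly those references---is the same route those sources take, so your proposal matches the paper's treatment. One caveat worth recording: the right-hand side of \eqref{Equ:precise gehring estimate} is the Teichm\"uller-type bound only in the regime $L\lesssim 1$ in which the paper actually applies the lemma (for large $L$ a radial segment of length $L$ against an antipodal ray from the unit sphere to infinity has $n$-capacity of order $\log L$ rather than $L^{n-1}$), and your relative-separation reduction, via the monotone Teichm\"uller function evaluated at $1/L$, yields the stated inequality precisely in that regime.
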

\end{remark}

Now we return to the proof of Theorem \ref{Thm:main theorem-g}. On $\partial\Omega_{\varepsilon,t}^{+}$, we want 
\begin{equation}
|\nabla \phi|^{n-2}\frac{\partial \phi }{\partial\vec{n}}\geq|\nabla G_{\varepsilon,t}^{+}|^{n-2}\frac{\partial 
G_{\varepsilon,t}^{+}}{\partial\vec{n}}.\label{Equ:compare with G}
\end{equation}
This is because, in the normal direction at each point $x\in \partial\Omega_{\varepsilon,t}^{+}$,
$$
\frac{\partial \phi}{\partial\vec{n}} (x) \geq\frac{\partial G_{\varepsilon,t}^{+}}{\partial\vec{n}} (x)
$$
due to the definition of $\Omega_{\varepsilon, t}$. While, obviously, in the direction $\tau$ tangent to the boundary at each 
$x\in \partial\Omega_{\varepsilon,t}^{+}$,
$$
\frac{\partial \phi}{\partial\vec{\tau}} (x) = \frac{\partial G_{\varepsilon,t}^{+}}{\partial\vec{\tau}}(x).
$$
Therefore
\begin{itemize}
\item if $\frac{\partial G_{\varepsilon,t}^{+}}{\partial\vec{n}} (x) \geq0$, then we have 
$|\nabla \phi (x)|\geq |\nabla G_{\varepsilon,t}^{+}(x)|$ and \eqref{Equ:compare with G} holds; 
\item if $\frac{\partial G_{\varepsilon,t}^{+}}{\partial\vec{n}} (x) <0$
and $\frac{\partial \phi}{\partial\vec{n}} (x) \geq0$, \eqref{Equ:compare with G} trivially holds;
\item  if $\frac{\partial G_{\varepsilon,t}^{+}}{\partial\vec{n}} (x) \leq\frac{\partial\phi}{\partial\vec{n}} (x) <0$,
then $|\nabla \phi(x)|\leq|\nabla G_{\varepsilon,t}^{+}(x)|$ and still \eqref{Equ:compare with G} holds. 
\end{itemize}
So \eqref{Equ:compare with G} always holds as desired. Therefore, continuing from \eqref{Equ:integrate}, 
\begin{align*}
 & \int_{\Omega_{\varepsilon,t}^{+}}\text{Ric}_{g}(\nabla^g \phi)|\nabla \phi|^{n-2} e^{2\phi} dx\\
= & - \int_{\partial\Omega_{\varepsilon,t}^{+}}|\nabla \phi|^{n-2}\frac{\partial \phi }{\partial\vec{n}} dS\\
\leq & -\int_{\partial\Omega_{\varepsilon,t}^{+}}|\nabla G_{\varepsilon,t}^{+}|^{n-2}\frac{\partial G_{\varepsilon,t}^{+}}
{\partial\vec{n}}dS\\
= & (m+\varepsilon) |m+\varepsilon|^{n-2} w_{n-1}.
\end{align*}
Here in the last step, we use the fact that
$$
-\int_{\partial\Omega_{\varepsilon,t}^{+}}|\nabla \tilde G_{\varepsilon,t}^{+}|^{n-2}\frac{\partial \tilde G_{\varepsilon,t}^{+}}
{\partial\vec{n}}dS = \int_{\Omega_{\varepsilon, t}^+} (-\Delta \tilde G_{\varepsilon, t}^+)dx = (m+\varepsilon)|m+\varepsilon|^{n-2}
w_{n-1},
$$
for $t$ very large, where
$$
\tilde G_{\varepsilon, t}^+ =- (m+\varepsilon) \log|x| + t
$$
which agrees with $G_{\varepsilon, t}^+$ outside the unit ball.
\\

Similarly, using $G_{\varepsilon,t}^{-}$ and $\Omega_{\varepsilon, t}^-$, we have
\[
\int_{\Omega_{\varepsilon,t}^{-}} \text{Ric}_{g}(\nabla^g \phi) |\nabla \phi|^{n-2} e^{2\phi} dx
\geq (m-\varepsilon) |m-\varepsilon|^{n-2} w_{n-1}.
\]
Thus, by the exhaustion property of the chosen families of domains, \eqref{Equ:theorem 5.1-2} follows. The proof 
of Theorem \ref{Thm:main theorem-g} is completed.
\end{proof}


\section{Hypersurfaces in hyperbolic space}\label{Sec:hypersurfaces}

In this section we want to use Theorem \ref{Thm:AH-1}
and Theorem \ref{Thm:main theorem-2} to study the asymptotic end
structure of embedded hypersurfaces in hyperbolic space with nonnegative
Ricci. Our work here is inspired by and improves the results in \cite{AlCu,AlCu2}. 
In the light of \cite[Main Theorem]{BMQ-r}, in this paper, we focus on the study of end 
structure at infinity for these hypersurfaces in hyperbolic space with nonnegative Ricci 
and one single end. We refer readers to Section \ref{Subsect:hypersurfaces} for a very 
brief introduction of complete and globally strictly convex hypersurfaces in hyperbolic space
(cf. \cite{AlCu, AlCu2, BMQ-s, BMQ-r}). For convenience of readers, we first remind us what is Busemann 
coordinates in hyperbolic space. We start with half space model for hyperbolic space
$$
R^{n+1}_+ = \{(x_1, x_2, \cdots, x_n, x_{n+1}): (x_1, x_2, \cdots, x_n)\in \mathbb{R}^n 
\text{ and } x_{n+1} > 0\}
$$
with the hyperbolic metric 
$$
g_{\mathbb{H}} = \frac {|dx|^2 + |dx_{n+1}|^2}{x_{n+1}^2}.
$$
We use the notation that $\partial_\infty\mathbb{H}^{n+1} = \mathbb{R}^n\bigcup \{p_\infty\}$
in this half space model. A vertical graph in hyperbolic space is the hypersurface given by 
$$
\phi (x) = (x, f(x)): \Omega\to\mathbb{R}^{n+1}_+, x=(x_1,\cdots,x_n)
$$
for a function
$$
x_{n+1}= f (x): \Omega\subset\mathbb{R}^n\to \mathbb{R}_+=\{s\in \mathbb{R}: s>0\}.
$$
The Busemann coordinates is $(x, \rho)\in \mathbb{R}^{n}\times \mathbb{R}$ such that 
$$
\rho = \log x_{n+1}.
$$
In this coordinates 
$$
g_{\mathbb{H}} = e^{-2\rho}|dx|^2 + d\rho^2.
$$ 
Therefore the height function for a vertical graph in Busemann coordinates is
$$
\rho (x) = \log f (x): \Omega \to \mathbb{R}.
$$ 
It is worth to mention that, in such coordinates, an equidistant
hypersurface with one end at $p_\infty$ is represented
by 
\[
\rho = \log |x- x_0| + C
\]
for the other end at some point $x_0 \in \mathbb{R}^n\subset\partial_\infty\mathbb{H}^{n+1}$ 
and a constant $C$.
\\

For a vertical graph $\rho=\rho(x)$ in Busemann coordinates,
one considers the inscribed radially symmetric graph (which is called
inner rotation hypersurface in \cite{AlCu,AlCu2}). More precisely,
let 
\[
\hat{\rho}(r)=\sup_{|x|=r}\rho(x).
\]
To see the use of inscribed radially symmetric graphs $\hat\rho$, 
similar to what was observed when hypersurfaces were assumed  
to be nonnegatively curved in \cite{AlCu,AlCu2}, we first observe:

\begin{lemma}\label{Lem:under equidistant} Suppose that the graph
$\rho=\rho(x)$ over $\Omega\subset\mathbb{R}^{n}$ in Busemann coordinates in hyperbolic space 
is complete and with nonnegative Ricci and one single end at $p_\infty$. 
Then $\Omega = \r^n$ and there is an equidistant hypersurface $\rho=\log|x|+C$ such that
\begin{equation}\label{Equ:rho upper bound}
\rho (x)\leq\hat \rho (|x|) \leq \log|x|+C
\end{equation}
for all $|x|$ sufficiently large. 
\end{lemma}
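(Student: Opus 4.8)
The plan is to separate the statement into its two assertions — that $\Sigma$ is a \emph{global} vertical graph ($\Omega=\mathbb{R}^{n}$), and that the inscribed radial graph $\hat\rho$ lies below some equidistant hypersurface — and to handle the first by convexity and the second by reduction to a one–dimensional comparison.

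First I would record that nonnegative Ricci curvature forces $\Sigma$ to be strictly locally convex: with the appropriate orientation every principal curvature satisfies $\kappa_i(\sum_{l=1}^{n}\kappa_l)-\kappa_i^{2}\ge n-1$, hence $\kappa_i>0$, exactly as in the table in Section \ref{Subsect:hypersurfaces}. A complete, connected, strictly locally convex, properly embedded hypersurface in $\mathbb{H}^{n+1}$ is embedded as the boundary of a convex body $K\subset\mathbb{H}^{n+1}$ (the hyperbolic analogue of Hadamard's theorem; cf. \cite{AlCu,AlCu2} and \cite[Main Theorem]{BMQ-r}). Since $\partial_\infty\Sigma=\{p_\infty\}$ is a single point, its only possibility is $\partial_\infty K=\{p_\infty\}$ as well: a closed subset of $S^{n}$ whose topological frontier is one point is either that point or all of $S^{n}$, and $K\ne\mathbb{H}^{n+1}$. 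Working in the half–space model with $p_\infty$ at the top, convexity of $K$ together with $p_\infty\in\overline K$ makes $K$ upward closed along every vertical geodesic and shows that every vertical geodesic actually meets $K$; therefore $K=\{(x,t):t\ge\rho(x)\}$ for a function $\rho$ defined on \emph{all} of $\mathbb{R}^{n}$, so $\Omega=\mathbb{R}^{n}$ and $\Sigma$ is the graph of $\rho$. The inequality $\rho(x)\le\hat\rho(|x|)$ is then just the definition of $\hat\rho(r)=\sup_{|x|=r}\rho(x)$.

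For the remaining bound $\hat\rho(r)\le\log r+C$ I would pass to the inscribed radially symmetric graph. Rotations of $\mathbb{R}^{n}$ about the origin extend to hyperbolic isometries of $\mathbb{H}^{n+1}$ fixing the vertical axis $L$ (the geodesic joining $0$ and $p_\infty$), so $\hat K:=\bigcap_{R\in SO(n)}R\cdot K$ is again a convex body; one checks directly that $\hat K=\{(x,t):t\ge\hat\rho(|x|)\}$, whose lower boundary is the rotation hypersurface, and that $\hat K$ is rotationally symmetric about $L$ with $p_\infty\in\overline{\hat K}$. Restricting to a totally geodesic $\mathbb{H}^{2}\subset\mathbb{H}^{n+1}$ through $L$ turns the question two–dimensional: the convex curve $x_{n+1}=e^{\hat\rho(|x_1|)}$, symmetric in $x_1$ and with both ideal endpoints at $p_\infty$, must stay below a Euclidean half–line $x_{n+1}=e^{C}|x_1|$, equivalently $\hat K$ must contain a solid tube $\{d(\cdot,L)\le d_0\}$ near infinity. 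Here I would also use that $\rho$ is $n$–subharmonic on $\mathbb{R}^{n}$ by \cite[Theorem 3.1]{BMQ-r}, which makes $\hat\rho$ a nondecreasing, convex function of $\log r$, so that it suffices to bound its asymptotic slope by $1$; this is the place where the completeness and curvature hypotheses enter, and where I would run the barrier comparison with the one–parameter family of equidistant hypersurfaces of $L$, following \cite{AlCu,AlCu2}.

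The main obstacle is precisely this last step. Convexity alone does \emph{not} yield the logarithmic bound — there are rotationally symmetric convex hypersurfaces (e.g.\ $x_{n+1}=e^{|x|}$) whose heights grow faster than $\log r$, and what excludes them is that their induced metric develops negative radial Ricci curvature; so the argument must genuinely exploit $\mathrm{Ric}\ge 0$ (and completeness), not just the shape of $K$. My plan is to reduce, as above, to the $\mathbb{H}^{2}$ comparison between the inscribed convex curve and the equidistant curves of $L$, and to adapt the barrier argument of Alexander–Currier \cite{AlCu,AlCu2} essentially verbatim, the only change being that their nonnegative sectional curvature hypothesis is replaced by nonnegative Ricci, which still produces the pointwise strict convexity recorded in Section \ref{Subsect:hypersurfaces}, and that is all the comparison uses. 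A minor secondary point is the regularity of $\hat\rho$, which is only locally Lipschitz a priori; this causes no trouble because the comparison with equidistant hypersurfaces may be carried out in the support (viscosity) sense.
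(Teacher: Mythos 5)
Your proposal has two genuine gaps, both at places where the curvature hypothesis must do the work. First, the deduction of $\Omega=\mathbb{R}^{n}$ from convexity plus the single ideal point is not valid, and cannot be, since that step of yours never uses $\mathrm{Ric}\ge 0$. In the half-space model the graph $x_{n+1}=(1-|x|^{2})^{-1}$ over the unit ball is complete, properly embedded and strictly convex (for an increasing convex radial profile $f$ both the radial and the rotational principal curvatures, computed with respect to the upward normal, equal $\frac{ff''}{(1+f'^{2})^{3/2}}+\frac{1}{\sqrt{1+f'^{2}}}$ and $\frac{ff'}{r\sqrt{1+f'^{2}}}+\frac{1}{\sqrt{1+f'^{2}}}$, hence are positive), it has one end and its only ideal point is $p_\infty$; yet it is a graph over the unit ball only. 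So ``convexity of $K$ together with $p_\infty\in\overline K$ \dots shows that every vertical geodesic actually meets $K$'' is false: upward closedness along vertical geodesics through points of $K$ is all you get, and what rules out such examples is precisely $\mathrm{Ric}\ge 0$. The paper's logic runs in the opposite order: the bound $\hat\rho(r)\le\log r+C$ is first proved for $r\in(r_0,R)$, where $B(0,R)$ is the maximal ball contained in $\Omega$, and only afterwards is $R=\infty$ deduced, because $R<\infty$ together with completeness and the single end at $p_\infty$ would force $\hat\rho(r)\to\infty$ as $r\to R$, contradicting that bound. Thus $\Omega=\mathbb{R}^{n}$ is a consequence of the curvature-based height estimate, not of convexity.

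Second, the quantitative step that you yourself call the main obstacle is not actually supplied. Claiming that the Alexander--Currier barrier comparison can be adapted ``essentially verbatim'' because pointwise strict convexity ``is all the comparison uses'' contradicts your own correct remark two sentences earlier that convexity alone admits graphs such as $x_{n+1}=e^{|x|}$: their comparison exploits $\kappa_i\kappa_j\ge 1$ (nonnegative sectional curvature), which is strictly stronger than the pointwise information $\mathrm{Ric}\ge 0$ provides, so something must replace it and you do not say what. In the paper the replacement is concrete: (i) at every regular radius the inscribed rotational hypersurface $\hat\Sigma$ is supported by $\Sigma$, and comparison of principal curvatures shows $\hat\Sigma$ inherits nonnegative Ricci on its regular part, while $\hat\rho$ is nondecreasing and convex in $\log r$ with $\hat\rho'_{-}\le\hat\rho'_{+}$ at the at most countably many singular radii; (ii) if $\hat\rho'(\xi)>1/\xi$ at some regular $\xi$ (i.e.\ if the graph ever crosses an equidistant from below), then the horizontal spherical section of $\hat\Sigma$ at $r=\xi$ has negative definite second fundamental form, unlike the totally geodesic sections of the equidistant, and nonnegative Ricci together with the fact that the mean curvature of these sections can only drop across singular radii forces $\hat\Sigma$ to close up and be compact, a contradiction; this yields \eqref{Equ:rho upper bound} on $(r_0,R)$ and then, as above, $R=\infty$. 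Without this mechanism (or a genuine substitute), the comparison with the family of equidistant hypersurfaces does not go through under $\mathrm{Ric}\ge 0$ alone, so your proposal is incomplete exactly at the decisive point. (Your use of $n$-subharmonicity of $\rho$ to get monotonicity and $\log r$-convexity of $\hat\rho$, and the handling of the countable singular set, are consistent with the paper and unproblematic.)
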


\begin{proof} First of all, we know the hypersurface is globally and strictly convex. 
Let $\hat{\Sigma}$ be the inscribed radially symmetric 
hypersurface as the graph of $\hat\rho$ to the hypersurface as the graph of $\rho$. 
It is easy to see that $\partial_{\infty}\hat{\Sigma}=\{p_\infty\}$. 
\\

Apply maximum principle to $\rho$, we can prove that $\hat\rho$ is non-decreasing. From the fact that
 $A\log r+B$ is $n$-harmonic and maximum principle, following the argument of \cite[Page 66]{HK}, we can show that
$\hat\rho$ is convex in $\log r$. Hence
$\hat\rho$ is continuous and differentiable except at countably many points.
Moreover, at a singular point $a$, $\hat\rho'_{-}(a) < \hat\rho'_{+}(a).$ 
When $\hat\rho$ is differentiable for $r\in (a, b)$, the corresponding
portion of $\hat{\Sigma}$ has nonnegative Ricci curvature. Because,
for any fixed $r \in(a,b)$, $\hat\Sigma$ is supported by $\Sigma$ at
least at some point $x$ with $|x|=r$. By the comparison of principal curvatures, one
may easily derive that Ricci of $\hat\Sigma$ is nonnegative from that the Ricci of $\Sigma$
is nonnegative. Therefore $\hat\Sigma$ is with Ricci curvature nonnegative everywhere 
on the regular part of $\hat\Sigma$. 
\\

Now, assume without loss of any generality that $0\in \Omega$. Let $R$ be the radius of the maximal ball 
$B(0, R)\subset\Omega$. For any fixed $r_0< R$, we take $C$ sufficiently large such that
$$
\hat \rho (r_0) <  \log r_0 + C.
$$
Here $\rho = \log |x| + C$ is the equidistant hypersurface about the vertical geodesic line $\gamma$ 
connecting $p_\infty$ and $0\in \mathbb{R}^n\subset\partial_\infty\mathbb{H}^{n+1}$. Then we claim 
\begin{equation}\label{Equ:rho upper}
\hat\rho (r) \leq \log r + C
\end{equation}
for all $r\in (r_0, R)$. Assume otherwise, there is some interval $[r_1,r_2]\subset(r_0, R)$,
such that 
$$
\hat\rho(r_1)=\log r_1+C \text{ and } \hat\rho (r)> \log r+C \text{ for $r\in(r_{1},r_{2}]$}.
$$
Then there has to be some $\xi\in(r_1, r_2)$ where $\hat\rho$ is differentiable and $\hat\rho'(\xi)> 1 / \xi$. 
This implies, the horizontal spherical section of $\hat\Sigma$ at $r=\xi$ is with negative definite second fundamental form,  
in contrast to the equidistant hypersurface, whose horizontal spherical sections are totally geodesic. 
Because that $\hat{\Sigma}$ is with nonnegatively Ricci when it is differentiable, and that the mean curvature of the spherical
section only drops at singular point, one may derive that $\hat{\Sigma}$ can only be compact, which clearly is a contradiction.
So we proved \eqref{Equ:rho upper}. To see $R=\infty$, we assume otherwise. Then, from the fact that $\Sigma$ is complete
and has only one end at $p_\infty$, $\lim_{r\to R}\hat\rho (r) = \infty$, which contrdicts with 
\eqref{Equ:rho upper}. Hence $\Omega = \r^n$ and \eqref{Equ:rho upper}
holds for all $r\geq r_0$. Thus the proof of the lemma is completed.
\end{proof}

Based on Theorem \ref{Thm:AH-1} and Theorem \ref{Thm:main theorem-2}, we are able to
improve the results on asymptotic behavior of global vertical graph of nonnegative sectional curvature 
in \cite{AlCu, AlCu2}. Namely,  

\begin{theorem} \label{Thm:main theorem-3} Suppose that $\Sigma$
is a properly embedded, complete hypersurface with nonnegative Ricci
and single end. Then it is a global graph of $\rho= \rho(x)$ in Busemann coordinates
and it is asymptotically rotationally symmetric in the sense that
there is a number $m\in[0,1]$ such that 
\[
m\log|x|+o(\log|x|)\leq \rho(x)\leq m\log|x|+C
\]
as $x \to \infty$ in $\r^n$. Moreover, $m=0$ implies that the hypersurface is a horosphere.  In any case, the
hypersurface $\Sigma$ always stays inside a horosphere and is supported by some equidistant hypersurface. 
\end{theorem}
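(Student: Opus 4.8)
The plan is to realize $\Sigma$ as a global vertical graph, extract the asymptotics of its Busemann height from Theorem \ref{Thm:AH-1} after an inversion, and then remove the exceptional $n$-thin set using strict convexity together with Gehring's capacity estimate.

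\emph{Step 1: global graph and the upper bound.} Nonnegative Ricci forces the principal curvatures to satisfy $\kappa_i>0$, so $\Sigma$ is strictly convex, and the single-end hypothesis means $\partial_\infty\Sigma$ is a single point, which we normalize by a M\"obius transformation to be $p_\infty$ (the two-ended alternative is ruled out by \cite[Main Theorem]{BMQ-r}). A strictly convex complete hypersurface asymptotic only to $p_\infty$ is a vertical graph $\rho=\rho(x)$ over some domain, and Lemma \ref{Lem:under equidistant} upgrades the domain to all of $\mathbb{R}^n$ and gives $\rho(x)\le\hat\rho(|x|)\le\log|x|+C$, where $\hat\rho(r)=\sup_{|x|=r}\rho$ is non-decreasing and convex in $\log r$. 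Convexity in $\log r$ together with the equidistant barrier forces $\hat\rho(r)=m\log r+O(1)$ with $m:=\lim_{r\to\infty}\hat\rho(r)/\log r\in[0,1]$; in particular $\rho(x)\le m\log|x|+C$ and the equidistant hypersurface $\rho=\log|x|+C$ supports $\Sigma$. Convexity together with $\partial_\infty\Sigma=\{p_\infty\}$ also shows $\Sigma$ lies inside a horoball at $p_\infty$, which is the ``inside a horosphere'' assertion.

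\emph{Step 2: asymptotics off an $n$-thin set.} By \cite[Theorem 3.1]{BMQ-r} the height function $\rho$ is $n$-subharmonic on $\mathbb{R}^n$; since $n$-super/subharmonicity is conformally invariant, composing with the inversion $y\mapsto y/|y|^2$ and comparing against the equidistant barrier of Step 1 produces, on a punctured ball about $0$, a nonnegative lower semicontinuous $n$-superharmonic function to which Theorem \ref{Thm:AH-1} applies. That theorem then yields an $n$-thin set $E_0$ at the origin off which the relevant quotient converges to its $\liminf$ with the accompanying lower bound; pulling back through the inversion and comparing with the upper bound of Step 1 identifies the limiting value with $m$, so that off $E:=\{x: x/|x|^2\in E_0\}$, which is $n$-thin at infinity, one has $\rho(x)/\log|x|\to m$ and $\rho(x)\ge m\log|x|-C$.

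\emph{Step 3: killing the thin set, and the case $m=0$.} The crux is to promote the lower bound from ``off $E$'' to ``everywhere,'' and here strict convexity is decisive. Given $x\to\infty$ with $|x|=r$, a radial-projection argument exactly as in the final step of the proof of Theorem \ref{Thm:AH-1}, now made quantitative by Gehring's sharp lower capacity bound (Lemma \ref{Lem:gehring}), shows that the $n$-thin set $E$ cannot shadow a whole fixed angular neighborhood of $x/r$ on the sphere $\{|x|=r\}$; hence there are points $x'\notin E$ in such a neighborhood with $\rho(x')\ge m\log r-C$. Strict convexity of $\Sigma$ then forces the graph at $x$ to stay within a controlled amount of its value at $x'$, giving $\rho(x)\ge m\log|x|+o(\log|x|)$ with no exceptional set (and $\rho(x)\le m\log|x|+C$ by convexity of $\hat\rho$ in $\log r$). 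Finally, if $m=0$ then $\rho$ is trapped between $o(\log|x|)$ and $C$, hence bounded, and a bounded $n$-subharmonic function on $\mathbb{R}^n$ is constant by the Liouville theorem of \cite{Red} (cf. \cite{HKM}), so $\rho$ is constant and $\Sigma$ is a horosphere at $p_\infty$; the remaining assertions were recorded in Step 1. The main obstacle I expect is precisely the convexity argument in Step 3: converting the $n$-capacity smallness of $E$ into a genuine geometric obstruction to a deep, narrow dip of the graph, which is exactly where the sharp form of Gehring's estimate and the strict convexity must be used hand in hand.
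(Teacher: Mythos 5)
Your outline reproduces the paper's broad strategy, but the two steps you leave vague are exactly where it does not close. The decisive gap is Step 3. In the paper's argument (in the inverted picture), a bad point $s_k\theta_k\in E$ with height $e^{\tau(s_k\theta_k)}<s_k^{m_1+\epsilon_0}$ as in \eqref{Equ:height at E} is compared with an off-$E$ point on the \emph{same ray} at radius $\hat s_k\in(s_k(1-s_k^l),s_k)$, i.e.\ at horizontal distance at most $s_k^{l+1}$ with $l$ large; such a point exists because otherwise $E$ would contain a whole radial segment of definite relative length in each dyadic annulus, and Gehring's bound (Lemma \ref{Lem:gehring}) would force $cap_n(E\cap\omega_i,\Omega_i)\gtrsim i^{-(n-1)}$, contradicting Definition \ref{Def:n-thin}. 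Then the supporting totally geodesic hyperplane (a Euclidean hemisphere) at the off-$E$ point still has height exceeding $s_k^{m_1+\epsilon_0}$ at radius $s_k$ precisely because the displacement $s_k^{l+1}$ is negligible against the height scale $\hat s_k^{m_1+\epsilon_0/2}$ — this is the computation \eqref{Equ:height on the hyperplane} — which contradicts convexity. Your version, finding $x'\notin E$ only in a \emph{fixed angular neighborhood} on the sphere $\{|x|=r\}$ and asserting that convexity "forces the graph at $x$ to stay within a controlled amount of its value at $x'$", does not work: since $m_1\ge 1$ the heights are $o(r)$, so the supporting hemisphere at $x'$ has Euclidean radius comparable to the (tiny) height there and reaches height zero at horizontal distance far smaller than $r$; it yields no lower bound at a point a distance $\Theta(r)$ away. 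So the crux you yourself flag as the "main obstacle" is not merely unfinished — the mechanism you sketch (angular projection at fixed radius) points in a direction that fails, whereas the correct use of Gehring is on short radial segments, paired with the explicit hemisphere-height estimate.

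Two further problems. In Step 2, "composing $\rho$ with the inversion and comparing against the equidistant barrier" does not produce the input for Theorem \ref{Thm:AH-1}: because $\Delta_n$ is nonlinear, neither $\log|x|+C-\rho$ nor $-\rho\circ T$ plus a logarithmic term is $n$-superharmonic by superposition. The paper instead changes the Busemann coordinates (a hyperbolic isometry placing the end at the origin), so the new height $\tau=\rho-2\log|x|-\log(1+x_{n+1}^2/|x|^2)$ is itself $n$-subharmonic by \cite[Theorem 3.1]{BMQ-r} applied in the new coordinates — a geometric, not a linear-algebraic, fact — and $-\tau$ is the nonnegative $n$-superharmonic function fed into Theorem \ref{Thm:AH-1}. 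Finally, in the case $m=0$ your inference "trapped between $o(\log|x|)$ and $C$, hence bounded" is false ($o(\log|x|)$ may tend to $-\infty$), and Reshetnyak's theorem \cite{Red} is a Liouville theorem for $n$-\emph{harmonic} functions, not for bounded-above $n$-subharmonic ones. The paper closes this by first proving the flux identity $|m|^{n-2}m=\frac 1{w_{n-1}}\int_{\mathbb{R}^n}\Delta_n\rho\,dx$ via the exhaustion domains of Theorem \ref{Thm:main theorem-g}; then $m=0$ together with $\Delta_n\rho\ge 0$ forces $\Delta_n\rho\equiv 0$, and the Liouville theorem of \cite[Theorem 6.2 and Corollary 6.11]{HKM} for $n$-harmonic functions bounded above gives that $\rho$ is constant. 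A parabolicity-type Liouville theorem for bounded-above $n$-subharmonic functions on $\mathbb{R}^n$ could substitute, but you would have to state and justify it; as written, your citation does not cover the step.
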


\begin{proof}
As the first step, to use Theorem \ref{Thm:AH-1}, we first want to change a coordinatees in hyperbolic space, 
that is, to choose a different point at infinity $\partial_\infty\mathbb{H}^{n+1}$ for the half space model. Then, 
based on Lemma \ref{Lem:under equidistant}, we know
a priori that the hypersurface $\Sigma$ is below an equidistant hypersurface at least near the end at $p_\infty$. Hence,
in the new Busemann coordinates, the hypersurface $\Sigma$ is no longer a global graph of the height function, rather, a graph of the height 
function over a punctured ball, $B(0,R)\backslash\{0\}$ in the new Busemann coordinates$(y, \tau)$
(in other words, we may put the end at infinity of $\Sigma$ at the origin of the new Busemann
coordinates). Therefore we are looking at the part of the 
hypersurface $\Sigma$ that is parametrized as the graph of the height function $\tau = \tau (y)$, which is a 
$n$-subharmonic function in $B(0, R)\setminus\{0\}$ with $\lim_{y\to 0}\tau (y) = -\infty$. 
Thus, we may apply Theorem \ref{Thm:AH-1} to $-\tau$ and obtain
\begin{equation}\label{Equ:tau}
\aligned
\tau (y) \leq - m_1 \log\frac 1{|y|} + C  & \text{ for all $y\in B(0, R)\setminus\{0\}$} \\
\tau (y) \geq - m_1 \log\frac 1{|y|} + o(\log\frac 1{|y|}) & \text{ for all $y \in (B(0, R)\setminus\{0\})\setminus  E$} 
\endaligned
\end{equation}
for some $m_1 \geq 1$ and a set $E$ that is $n$-thin at the origin. \eqref{Equ:tau} can be improved to
\begin{equation}\label{Equ:tau improved}
- m_1 \log\frac 1{|y|} + o(\log\frac 1{|y|}) \leq \tau (y) \leq - m_1 \log\frac 1{|y|} + C   \text{ for all $y\in B(0, R)\setminus\{0\}$}
\end{equation} Let us assume this temporarily.

For the convenience of readers, we have here 
the transformation laws of the change of parameterizations of hyperbolic space from ball model to half space model (cf. \cite[Chapter 4]{Rat}):
$$
\left\{\aligned
y & = \frac {2z}{|Z - e_{n+1}|^2}\\
y_{n+1} & = \frac {1 - |Z|^2}{|Z - e_{n+1}|^2}
\endaligned\right.
\quad \text{ and } \quad
\left\{\aligned
x & = \frac {2z}{|Z + e_{n+1}|^2}\\
x_{n+1} & = \frac {1 - |Z|^2}{|Z + e_{n+1}|^2}
\endaligned\right.
$$
for $Z=(z, z_{n+1}) \in B(0, 1)\subset\r^{n+1}$, $Y = (y, y+{n+1})\in \r^{n+1}_+$ and $X = (x, x_{n+1})\in \r^{n+1}_+$, 
where $e_{n+1} = (0, 1)$ is the north pole of the unit sphere in $\r^{n+1}$. In $Y$ coordinates it takes the north pole 
to infinity and the south pole to the origin; while in $X$ coordinates it takes the south pole to infinity and the north 
pole to the origin. Hence the coordinate change between $X$ and $Y$ is the inversion with respect to the unit sphere centered at 
the origin:
$$
Y = \frac X{|X|^2} \quad \text{ or } \quad \left\{\aligned y & = \frac {x}{|x|^2 + x_{n+1}^2}\\
                                                              y_{n+1} & = \frac {x_{n+1}}{|x|^2 + x_{n+1}^2}.
                                                              \endaligned\right.
$$ 
We may assume from the beginning that the hypersurface $\Sigma$ has its end at $p_\infty = -e_{n+1}$. Therefore
$$
\frac 1{|y|}  = |x|  \cdot (1 + \frac {x_{n+1}^2}{|x|^2}) \text{ and } 
\tau  =  \log y_{n+1} = \rho - 2 \log |x| - \log (1 + \frac {x_{n+1}^2}{|x|^2}).
$$
So we may translate \eqref{Equ:tau improved} into
\begin{equation}
m \log |x|  + o(\log |x|)\leq\rho(x) \leq m \log |x|  + C   \text{ for all $x \in \r^n$}\label{Equ:rho-theorem 3.1-1} 
\end{equation}
for  some $m = 2- m_1 \leq 1$. Here we use \eqref{Equ:rho upper bound} 
from Lemma \ref{Lem:under equidistant} to control $x_{n+1}^2 / |x|^2$.
\\

Next step is to improve \eqref{Equ:tau} and eliminate any nontrivial $n$-thin set $E$. Our approach here is  to use the 
strict and global convexity of the hypersurface $\Sigma$ to rule out the nontrivial $n$-thin set $E$, which is close to that
in \cite{AlCu, AlCu2} in 2 dimensions but more straightforward. 
Assume otherwise, \eqref{Equ:tau improved} is not true on a set $E$, which is $n$-thin and non-compact. 
Hence, there is a positive number $\epsilon_0$ and a sequence point $p_k = (s_k\theta_k, \tau(s_k\theta_k))\in\Sigma$ such that
\begin{equation}\label{Equ:height at E}
y_{n+1}^\Sigma (s_k\theta_k)= e^{\tau (s_k\theta_k)} < s_k^{m_1 + \epsilon_0} 
\end{equation}
and $s_k\to 0$.  We have, in the light of Lemma \ref{Lem:gehring} and Definition \ref{Def:n-thin},
for each $s_k\theta_k\in E$, there always exists $\hat s_k \theta_k\notin E$ for $\hat s_k \in (s_k(1 - s_k^l), s_k)$ for any fixed large 
$l \geq 1$. This can be proved by contradiction. Assume otherwise, one derives from \eqref{Equ:precise gehring estimate} that, 
for each $i$, 
\begin{equation}
cap_n(E\cap\omega_i, \Omega_i) \geq \frac {c_n}{( 2i(l+1)\log 2)^{n-1}},
\end{equation}
by the scaling invariance, which is impossible by Definition \ref{Def:n-thin}. On the other hand, there is $\delta_0$ such that
$$
y_{n+1}^\Sigma (s\theta) = e^{\tau(s\theta)} \geq s^{m_1 + \frac 12 \epsilon_0}
$$
for all $s\theta \notin E$ and $0 < s < \delta_0$. In particular
\begin{equation}\label{Equ:height at left and right}
y_{n+1}^\Sigma(\hat s_k \theta_k) \geq \hat s_k^{m_1 + \frac 12\epsilon_0}  \geq a_0 s_k^{m_1 +\frac 12\epsilon_0}
\end{equation}
for some positive $a_0$, at least when $k$ is large. Let us assume the following is the equation for the semi-circle that is inside the 
hyperplane tangent to $\Sigma$ at the point over $\hat s_k\theta_k$ and in the 2-plane for the fixed $\theta_k\in \mathbb{S}^{n-1}$
$$
|s - c_k|^2 + y_{n+1}^2 = r_k^2 = |\hat s_k - c_k|^2 + (y^\Sigma_{n+1}(\hat s_k\theta_k))^2
$$
where $(c_k, 0)$ is the center of the semi-circle and $0 < c_k < s_k$ due to the fact that 
$$y^\Sigma_{n+1}(\hat s_k\theta_k) < y^\Sigma_{n+1}(s_k\theta_k).$$ 
We may estimate the height of this semi-circle at $s = s_k$:
\begin{align}
y_{n+1}^2|_{s = s_k} & \geq  a_0^2 s_k^{2m_1 +\epsilon_0} + |\hat s_k - c_k|^2 - |s_k - c_k|^2 \nonumber\\
& = a_0^2 s_k^{2m_1 +\epsilon_0} + |\hat s_k - s_k|^2  - 2 (s_k- \hat s_k) \cdot (s_k- c_k)  \nonumber \\
&\geq a_0^2 s_k^{2m_1 +\epsilon_0} - c_0 s_k^{2m_1 + 1 + 2\epsilon_0}  > (y^\Sigma_{n+1} (s_k\theta_k))^2
\label{Equ:height on the hyperplane} 
\end{align}
for some uniform $a_0$ and $c_0$ and some appropriately large $l$, in the light of \eqref{Equ:height at E}, 
which means the point $p_k$ on $\Sigma$ falls under the hyperplane and violates the strict and global convexity of 
$\Sigma$, at least when $k$ is large enough.
\\

So far we have shown that 
$$
m \log |x| + o(\log |x|) \leq \rho (x) \leq m \log |x| + C
$$
as $x \to \infty$ in $\r^n$ with $m \leq 1$. In the last step, we prove that $m\in [0, 1]$ and $\Sigma$ is a horosphere when $m=0$.
We at this point go back to the Busemann coordinates $(x, \rho)$, use the similar argument in the last step of the proof of 
Theorem \ref{Thm:main theorem-g} (even easier, because that there is no bad thin set), and obtain
$$
|m|^{n-2}m = \frac 1{w_{n-1}} \int_{\r^n} (\Delta_n \rho)dx \geq 0.
$$
Therefore, when $m=0$, $\rho$ in fact is an $n$-harmonic function and upper bounded by a constant.  In the light of Liouville Theorem in
\cite[Theorem 6.2 and Corollary 6.11]{HKM}, $\rho$ is a constant, i.e. $\Sigma$ is a horosphere.
\\

At last, it is easily seen that $\Sigma$ stays inside a horosphere, when $m>0$ or $m=0$, that is, there is some constant $C$ 
such that 
\[
\rho (x_{1},\cdots,x_{n})\geq C.
\]
The fact that $\Sigma$ is supported by some equidistant hypersurface is proved in Lemma \ref{Lem:under equidistant}.
The proof of Theorem \ref{Thm:main theorem-3} is complete.
\end{proof}

{\bfseries Acknowledgement} The first named author would like to thank Igor Verbitsky for helpful discussions. 


\end{document}